\documentclass[11pt]{amsart}
\newcommand{\sk}{\smallskip}
\newcommand{\mk}{\medskip}

\newcommand{\M}{\boldsymbol{\mathcal M}}
\newcommand{\Res}{\operatorname{Res}}
\newcommand{\Alt}{\operatorname{Alt}}

\newcommand{\dd}{\mathrm d}
\newcommand{\Log}{\operatorname{Log}}

\newcommand{\C}{\mathbf C}
\newcommand{\R}{\mathbf R}

\usepackage[all]{xy}

\usepackage{amsmath}
\usepackage{amssymb}
\usepackage{amsthm}
\usepackage{latexsym}
\usepackage{pstricks}
\usepackage{amsbsy}
\usepackage{amscd}
\usepackage{mathrsfs}
\usepackage{tipa}
\usepackage{txfonts}
\usepackage{amsfonts}
\usepackage{graphicx}
\usepackage{tabularx}
\usepackage{listings}
\usepackage{tikz}
\usepackage{hyperref}

\newtheorem{thm}{Theorem}[section]
\newtheorem{cor}[thm]{Corollary}
\newtheorem{lem}[thm]{Lemma}

\newtheorem{prop}[thm]{Proposition}

\newtheorem{rem}[thm]{Remark}
\newtheorem{defn-prop}[thm]{Definition-Proposition}

\author{Luc Pirio} \address{\href{https://lmv.math.cnrs.fr}{Laboratoire de Math\'ematiques de Versailles}, CNRS \& Universit\'e Paris-Saclay $-$ UVSQ, France\\
E-mail: {\href{mailto:luc.pirio@uvsq.fr}{luc.pirio@uvsq.fr}}}

\title[]%The $(n+3)$-web by rational curves on $\mathcal M_{0,n+3}$.]
 {On Euler's abelian relation of the web induced by\\  the  forgetful maps on the moduli space $\boldsymbol{\mathcal M_{0,n+3}}$:\\  the case when $\boldsymbol{n}$ is even}
 
\begin{document}
%%%%%%%%%%%%%%%%%%%%%%%%%%%%%%%%%%%%%%%%%%%%%%%
%%%%%%%%%%%%%%%%%%%%%%%%%%%%%%%%%%%%%%%%%%%%%%%

\begin{abstract}
In \cite{M06}, for every $n\geq 2$, we studied the curvilinear web $\boldsymbol{\mathcal W}_{0,n+3}$ on the moduli space $\boldsymbol{\mathcal M}_{0,n+3}$, defined by the $n+3$ forgetful maps $
\boldsymbol{\mathcal M}_{0,n+3}\rightarrow \boldsymbol{\mathcal M}_{0,n+2}$, 
with particular emphasis on its abelian relations.
In particular, when $n$ is even, we conjectured an explicit formula for its Euler abelian relation. This relation is constructed \`a la Gelfand--MacPherson from an invariant differential form representing the appropriate power of the Euler class of the tautological bundle 
of the Grassmannian manifold %$G_2^{\mathrm{or}}(\mathbf R^{n+3})$ 
of oriented $2$-planes in $\mathbf R^{n+3}$.
\sk 

%In this note, we obtain an explicit antisymmetrized formula for Euler's abelian
%relation for every even $n\geq 2$, and prove that it is equivalent, up to an explicit nonzero
%scalar, to the formula conjectured in \cite{M06}.

In this note, we obtain an explicit antisymmetrized formula for Euler's abelian
relation for every even $n\geq 2$, and prove that it coincides, up to the explicit factor
$-(n+1)(n+2)/2$, with the formula conjectured in \cite{M06}, which is therefore established
for all even $n$.
\end{abstract}
 \maketitle 

This note is a complement to  \cite{M06}, to which the reader is referred  for background, notation and related material. 
%%%%%%%%%%%%%%%%%%%%%%%%%%%%%%%%%%%%%%%%%%%%%%%
%%%%%%%%%%%%%%%%%%%%%%%%%%%%%%%%%%%%%%%%%%%%%%%
\section{Introduction}
%%%%%%%%%%%%%%%%%%%%%%%%%%%%%%%%%%%%%%%%%%%%%%%
%%%%%%%%%%%%%%%%%%%%%%%%%%%%%%%%%%%%%%%%%%%%%%%

%%%%%%%%%%%%%%%%%%%%%%%%%%%%%%%%%%%%%%%%%%%%%%%
%%%%%%%%%%%%%%%%%%%%%%%%%%%%%%%%%%%%%%%%%%%%%%%
\subsection{Background and motivation}
It is well known that Abel's celebrated five-term relation for the dilogarithm is carried by the $5$-web on $\boldsymbol{\mathcal M}_{0,5}$ induced by the five forgetful maps
\[
\boldsymbol{\mathcal M}_{0,5}\longrightarrow \boldsymbol{\mathcal M}_{0,4}.
\]
It is therefore natural to investigate the abelian relations of the higher-dimensional webs
$\boldsymbol{\mathcal W}_{0,n+3}$ defined, for every integer $n\geq 2$, by the $(n+3)$ forgetful maps
\[
\boldsymbol{\mathcal M}_{0,n+3}\longrightarrow \boldsymbol{\mathcal M}_{0,n+2}.
\]
These webs were studied by Damiano in \cite{DamianoThesis,Damiano}, following a geometric approach initiated by Gelfand and MacPherson \cite{GelfandMacPherson}. 
Gelfand and MacPherson used this
approach  to give a geometric construction of the dilogarithm and, subsequently, a differential-geometric explanation for the fact that it satisfies Abel's identity.

Following the Gelfand--MacPherson approach, Damiano constructed a distinguished $(n-1)$-abelian relation for
$\boldsymbol{\mathcal W}_{0,n+3}$ from an
${\rm SO}\big(\mathbf R^{n+3}\big)$-invariant differential form representing the $n$-th power of the Euler class of the tautological rank-$2$ bundle over the Grassmannian of oriented $2$-planes in $\mathbf R^{n+3}$. This relation is called the \emph{Euler abelian relation} and is denoted by
$\boldsymbol{\mathcal E}_{n}$ (see \cite[\S5]{M06} for more details).
\sk

In \cite{M06}, we revisited Damiano's study of the webs
$\boldsymbol{\mathcal W}_{0,n+3}$ and corrected a non-trivial error arising when $n$ is odd. Among other results, 
for even $n$, in a natural affine coordinate
system on
$\boldsymbol{\mathcal M}_{0,n+3}$, we obtained a conjectural explicit formula $\boldsymbol{{\mathcal E}}_{n}'$ for the components of
$\boldsymbol{\mathcal E}_{n}$. By direct symbolic computations, we verified this formula for every even $n\leq 12$; see formula (5.65) and Proposition 5.9.5 in \cite[\S5.9.2]{M06}.

The purpose of the present text is to establish an explicit closed formula for the Euler abelian relation for arbitrary even $n\geq 2$, and to relate it to the formula conjectured in \cite[\S5.9.2]{M06}.
%\begin{center}
%$\star$
%\end{center}
\mk 

We state our main results below, after introducing the necessary
notation and definitions.
%%%%%%%%%%%%%%%%%%%%%%%%%%%%%%%%%%%%%%%%%%%%%%%
%%%%%%%%%%%%%%%%%%%%%%%%%%%%%%%%%%%%%%%%%%%%%%%

%%%%%%%%%%%%%%%%%%%%%%%%%%%%%%%%%%%%%%%%%%%%%
%%%%%%%%%%%%%%%%%%%%%%%%%%%%%%%%%%%%%%%%%%%%%
%\subsection{}
%%%%%%%%%%%%%%%%%%%%%%%%%%%%%%%%%%%%%%%%%%%%%
%%%%%%%%%%%%%%%%%%%%%%%%%%%%%%%%%%%%%%%%%%%%%

%\cite[\S5.7.1]{PiriodP}
%
%
%\cite[\S5.9]{PiriodP}

%%%%%%%%%%%%%%%%%%%%%%%%%%%%%%%%%%%%%%%%%%%%%
%%%%%%%%%%%%%%%%%%%%%%%%%%%%%%%%%%%%%%%%%%%%%
%\subsection{An equivalent but more intrinsic approach}
%%%%%%%%%%%%%%%%%%%%%%%%%%%%%%%%%%%%%%%%%%%%%
%%%%%%%%%%%%%%%%%%%%%%%%%%%%%%%%%%%%%%%%%%%%%
\subsection{Some notation, definitions and statements of the results} 

\subsubsection{Some notation and conventions.} 
We use the following notation and conventions  for any integer $N\geq 2$: 
%%%%%%%%%%%%%%%%%%%%%%%%%%%%%%%%%%%%%%%%%%%%%
\begin{itemize}
%%%%%%%%%%%%%%%%%%%%%%%%%%%%%%%%%%%%%%%%%%%%%
\item Given a permutation $\sigma$ of $[\hspace{-0.06cm}[N]\hspace{-0.06cm}] =\big\{1,\ldots,N\big\}$, 
\begin{itemize}
\item we sometimes use the notation $\sigma_i=\sigma(i)$ for $i=1,\ldots,N$;
\item its signature will be denoted by $(-1)^\sigma$; 
\item the map $(x_i)_{i=1}^N \mapsto \big(x_{\sigma(i)}\big)_{i=1}^N$ will be denoted by the same symbol $\sigma$. 
\end{itemize}
\sk 
%%%%%%%%%%%%%%%%%%%%%%%%%%%%%%%%%%%%%%%%%%%%%
\item given a differential form $\Psi$ expressed in affine coordinates $x_i$, and 
provided that all the pullbacks below are defined, we set 
\[
\Alt_N\big(\Psi\big)
=
\Alt_{\mathfrak S_N}\big(\Psi\big)
=\frac{1}{N!}\sum_{\sigma\in{\mathfrak S}_N}
(-1)^\sigma\,\sigma^*\Psi\, .
\]
The same convention will be used for antisymmetrization with respect to any finite totally ordered index set, with the obvious modifications. 
\sk
\item 
Given a function $f$ for which this makes sense, we also use the notation
\[
\dd\Log f=\frac{\dd f}{f},
\]
so no choice of a  logarithm branch is needed.
%%%%%%%%%%%%%%%%%%%%%%%%%%%%%%%%%%%%%%%%%%%%%
%%%%%%%%%%%%%%%%%%%%%%%%%%%%%%%%%%%%%%%%%%%%%
\end{itemize}
%%%%%%%%%%%%%%%%%%%%%%%%%%%%%%%%%%%%%%%%%%%%%
%\mk 

%%%%%%%%%%%%%%%%%%%%%%%%%%%%%%%%%%%%%%%%%%%%%
\subsubsection{The spaces.}
%%%%%%%%%%%%%%%%%%%%%%%%%%%%%%%%%%%%%%%%%%%%%
Let $A=\{a_1,\ldots, a_N\}$ be an ordered set of labels of cardinality $N\geq 4$ (with $a_i<a_j$ for every $1\leq i<j\leq N$). We identify $A$ with $[\hspace{-0.06cm}[ N]\hspace{-0.06cm}]$
via the order-preserving bijection $a_i\mapsto i$.
\sk

Let $V$ be a fixed two-dimensional complex vector space, and write  $\mathbf P^1=\mathbf P(V)$.
 We fix once and for all a nonzero alternating form $
\det:\wedge^2 V\longrightarrow\C
$,  and for $(v_a)_{a\in A}\in V^A$ and for $a,a'\in A$, we set 
$$
[a a']={\rm det}\big( v_a,v_{a'})
$$
and if $a=a_i$ and $a'=a_j$, we will also write $[ij]=[a_i a_j]$. 
\sk 

We consider the following spaces 
\begin{itemize}
\item  $
\widetilde U_A
=
\left\{\, 
(v_a)\in\big(V\setminus\{0\}\big)^A\,;\,
[aa']\neq0\ \text{for }a\neq a'
\right\}.
$
\item  $
U_A
=
\left\{
(p_a)\in\big(\mathbf P^1\big)^A\,;\,
p_a\neq p_{a'}\ \text{for }a\neq a'
\right\}=\big(\mathbf P^1\big)^A\setminus D_A$ where $D_A$ stands for the union of the pairwise diagonals; 
\item the moduli space 
$\boldsymbol{\mathcal M}_{0,A}$ 
of projective configurations of distinct points on $\mathbf P^1$ indexed by $A$: 
$$
\boldsymbol{\mathcal M}_{0,A}=\Big( \big(\mathbf P^1\big)^A\setminus D_A\Big)\,{\Big/\,{\bf PGL}(V)} = U_A\,{\big/\,}_{{\bf PGL}(V)} \, . 
$$
\end{itemize}

As is well known, $\boldsymbol{\mathcal M}_{0,A}$ is the quotient of $U_A$ by the natural action of ${\bf PGL}(V)$ 
and $U_A$ is in turn the quotient of $\widetilde U_A$ by the independent scalar rescalings $v_a\mapsto {\lambda}_a v_a$ with ${\lambda}_a\in \mathbf C^*$.

%%%%%%%%%%%%%%%%%%%%%%%%%%%%%%%%%%%%%%%%%%%%%
\subsubsection{The differential form.} 
%%%%%%%%%%%%%%%%%%%%%%%%%%%%%%%%%%%%%%%%%%%%%
We define 
$$
\eta_{A}= {\rm Log}\, \big\lvert\,  [12]\,\big\lvert 
\bigwedge_{s=3}^{N-1}  \mathrm d  {\rm Log} \left( 
\frac{[1s][2N]}{[2s][1N]}
\right) 
$$
and 
we set 
\begin{equation}
\label{Eq:OmegaA-Definition}
\omega_A= {\rm Alt}_{{\mathfrak S}_A}\big( \eta_{A}\big)\, , 
\end{equation}

A priori, $\omega_A$ is a complex-valued real-analytic differential form of type $(N-3,0)$
on $\widetilde U_A$.  

%%%%%%%%%%%%%%%%%%%%%%%%%%%%%%%%%%%%%%%%%%%%%%%
\begin{prop}
\label{Prop:descent}
%%%%%%%%%%%%%%%%%%%%%%%%%%%%%%%%%%%%%%%%
%%%%%%%%%%%%%%%%%%%%%%%%%%%%%%%%%%%%%%%%%%%%%%%
The differential form $\omega_A$:
\begin{enumerate}
\item[{\bf 1.}] descends to  the moduli space $\boldsymbol{\mathcal M}_{0,A}$;
\item[{\bf 2.}]  is nonzero when $N$ is even. 
\end{enumerate} 
\sk 
%%%%%%%%%%%%%%%%%%%%%%%%%%%%%%%%%%%%%%%%%%%%%%%
\end{prop}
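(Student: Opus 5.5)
Descent to $\boldsymbol{\mathcal M}_{0,A}$ amounts to three properties of $\omega_A$ on $\widetilde U_A$: invariance under the torus of rescalings $v_a\mapsto\lambda_a v_a$, invariance under ${\bf GL}(V)$, and basicness, i.e.\ vanishing on the vertical vector fields generated by these two group actions. The $\mathrm d\Log$ factors of $\eta_A$ are already basic. Each cross-ratio $\frac{[1s][2N]}{[2s][1N]}$ is homogeneous of degree $0$ in every $v_a$ and invariant under ${\bf SL}(V)$; under ${\bf GL}(V)$ each bracket picks up a factor $\det g$, and these cancel. So each $\mathrm d\Log$ factor is the pullback of a holomorphic $1$-form on $\boldsymbol{\mathcal M}_{0,A}$, and the same holds for their wedge product and for all its $\sigma$-transforms. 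The only non-basic ingredient is the function $\Log|[12]|$. Under rescaling it shifts by $\Log|\lambda_1\lambda_2|$, and under $g\in{\bf GL}(V)$ by $\Log|\det g|$.

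I would therefore expand $\omega_A$ as follows. Write $\Phi_{12}=\bigwedge_{s=3}^{N-1}\mathrm d\Log\frac{[1s][2N]}{[2s][1N]}$, an $(N-3)$-form pulled back from $\boldsymbol{\mathcal M}_{0,A}$, so that $\eta_A=\Log|[12]|\,\Phi_{12}$. Then
\[
\omega_A=\frac1{N!}\sum_{\sigma}(-1)^\sigma\,\Log|[\sigma_1\sigma_2]|\;\sigma^*\Phi_{12}=\sum_{a<a'}\Log|[aa']|\,\Psi_{aa'},
\]
where each $\Psi_{aa'}$ is a combination of pulled-back basic forms, hence itself basic and invariant. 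Since $\Log|[12]|$ is a function, it contributes nothing when contracted with vertical fields, so $\omega_A$ is automatically basic. The remaining issue is invariance. Replacing $v$ by $\lambda\cdot v$ or $g\cdot v$ changes $\omega_A$ by
\[
\sum_{a<a'}\bigl(\Log|\lambda_a|+\Log|\lambda_{a'}|\bigr)\Psi_{aa'}\qquad\text{or}\qquad \Log|\det g|\sum_{a<a'}\Psi_{aa'}.
\]
These vanish for all $\lambda$ and $g$ exactly when $\sum_{a'\neq a}\Psi_{aa'}=0$ for every $a$; the ${\bf GL}$ condition follows by summing over $a$. Unwinding the antisymmetrization, this amounts to
\[
\mathrm{Alt}_{\mathfrak S_A}\bigl(\Log|\lambda_1\lambda_2|\,\Phi_{12}\bigr)=0 \quad\text{for arbitrary constants } \lambda_a .
\]
The most efficient route uses the stabilizer of $\{1,2\}$. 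Let $\tau=(1\,2)$. Swapping $1$ and $2$ replaces the cross-ratio by its inverse, so $\tau^*\Phi_{12}=(-1)^{N-3}\Phi_{12}$. When $N$ is even this equals $-\Phi_{12}$, which matches the sign of $\tau$, so this symmetry alone does not kill anything. The identity must therefore come from symmetrizing over the remaining labels $3,\ldots,N$.

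The key step I would aim for is the fact that $\sum_{a'\neq 1}\Psi_{1a'}=0$, which amounts to showing that $\mathrm{Alt}_{\mathfrak S_{A\setminus\{1\}}}$ of $\Phi_{12}$ vanishes, up to relabelling. With $1$ fixed and using ${\bf PGL}$ to send $v_1$ to $\infty$, each $\mathrm d\Log$ factor becomes $\mathrm d\Log\frac{x_s-x_2}{x_N-x_2}$ in affine coordinates $x_a$. The wedge product is then the pullback of the standard volume form of the ${\rm Aff}$-quotient of the configuration space of the $N-1$ points $x_2,\ldots,x_N$, via translation by $x_2$ and scaling by $x_N-x_2$. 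I expect this to be a multiple of a form like
\[
\frac{\mathrm d x_3\wedge\cdots\wedge \mathrm d x_{N-1}}{\prod_{s}(x_s-x_2)}\cdot(\ldots),
\]
whose total antisymmetrization over the $N-1$ labels vanishes by a partial-fractions (Arnold-type) identity. This is the main obstacle: pinning down the correct identity (Arnold relations among the $\mathrm d\Log(x_i-x_j)$) so that the antisymmetrization visibly cancels. I would first check it for $N=4$ and $N=6$ by hand.

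For part 2, I would compute in the affine chart $v_a=(1,x_a)$ at a convenient point, or along a degeneration. One option is to look at a leading term, for instance the coefficient of $\Log|x_1-x_2|$ as $x_1\to x_2$. Only the terms $\sigma$ with $\{\sigma_1,\sigma_2\}=\{1,2\}$ carry this singular logarithm. Their sum is $\tfrac{2}{N!}\sum(\pm)\Phi$ over permutations of $3,\ldots,N$, and for even $N$ the two $\tau$-related terms add rather than cancel. This reduces nonvanishing to showing that an antisymmetrized rational volume form on the $\boldsymbol{\mathcal M}_{0,N-1}$-type boundary stratum is nonzero. I would verify that by evaluating at a real configuration where all the $\mathrm d\Log$ factors have a definite sign, or by induction on $N$ via iterated residues.
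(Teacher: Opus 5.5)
There is a genuine gap: the identity that carries all of Part 1 is never proved.

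Your reduction in Part 1 is the paper's. The $\dd\Log$ factors are basic, and only the coefficients $\Log|[\sigma_1\sigma_2]|$ obstruct invariance. After using $\tau=(1\,2)$ to extract a factor $2$, everything comes down to $\sum_{\sigma(1)=1}(-1)^\sigma\sigma^*\Phi_{12}=0$, and the ${\bf GL}(V)$ case is the sum over all labels. You leave this vanishing as ``the main obstacle'', with only a guess at an Arnold-type identity.

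Your heuristic that the cancellation comes from the labels in slots $3,\ldots,N$ also points the wrong way. For fixed $b=\sigma(2)$, all $(N-2)!$ signed terms coincide. Let $\kappa_1<\cdots<\kappa_{N-2}$ be the elements of $A\setminus\{1,b\}$ and put $u_s=\dd\Log([1\kappa_s]/[b\kappa_s])$.
\begin{itemize}
\item Reordering the middle slots changes the wedge product and the signature by the same sign.
\item If the last slot carries $\kappa_r$, the form is $\bigwedge_{s\neq r}(u_s-u_r)=(-1)^{r-1}\Delta_b$, where $\Delta_b=\bigwedge_{s=2}^{N-2}(u_s-u_1)$.
\item The signature of that permutation is $(-1)^{b-r}$, since $N$ is even.
\end{itemize}
So the $b$-block equals $(N-2)!\,(-1)^{b-1}\Delta_b$, and cancellation can only occur across the $N-1$ values of $b$.

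The missing argument, which is the paper's, runs as follows. Put $p_1=\infty$, and let $T=\sum_j\partial_{x_j}$ and $E=\sum_jx_j\partial_{x_j}$. Each $\Delta_b$ is an $(N-3)$-form $\alpha$ with $\iota_T\alpha=\iota_E\alpha=0$. Such forms make up a one-dimensional space over $\mathbf C(x_2,\ldots,x_N)$, spanned by $\Theta=\iota_E\iota_T(\dd x_2\wedge\cdots\wedge\dd x_N)$. Evaluating on $\partial_{x_{\kappa_2}},\ldots,\partial_{x_{\kappa_{N-2}}}$ gives $(-1)^{b-1}\Delta_b=\Theta/Q'(x_b)$, with $Q(t)=\prod_{j=2}^N(t-x_j)$. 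The required identity is then the Lagrange interpolation identity $\sum_b 1/Q'(x_b)=0$. Your guessed shape is the case $b=2$ restricted to $x_2=0$, $x_N=1$. The point is to keep $x_2,\ldots,x_N$ free, so that every $b$ is measured against the same $\Theta$.

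For Part 2, your strategy can be completed, and it is more direct than the paper's. The paper deduces nonvanishing from the explicit formula $\sum_{k<\ell}K_{k\ell}\Log|y_{k\ell}|$ of its last section. Two points must be supplied.
\begin{itemize}
\item The ``coefficient of $\Log|x_1-x_2|$'' is not determined by letting $x_1\to x_2$, because the rational forms $\Psi_{aa'}$ may themselves have poles along $x_1=x_2$. What makes it well defined is the linear independence of the functions $\Log|[aa']|$ over rational functions. This is the fact the paper invokes, extending \cite[Lemma~5.9.6(3)]{M06}. It reduces $\omega_A\neq0$ to $\Psi_{12}\neq0$.
\item No evaluation at special configurations or residue induction is needed. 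The bookkeeping above with $b=2$, together with $\tau$, shows that all $2(N-2)!$ signed terms with $\{\sigma_1,\sigma_2\}=\{1,2\}$ equal $\Phi_{12}$. Hence $\Psi_{12}=\frac{2}{N(N-1)}\Phi_{12}$. Finally $\Phi_{12}\neq0$: for $p_1=\infty$, $p_2=0$, $p_N=1$, its factors are $-\dd x_s/x_s$, $3\leq s\leq N-1$.
\end{itemize}
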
%%%%%%%%%%%%%%%%%%%%%%%%%%%%%%%%%%%%%%%%
%%%%%%%%%%%%%%%%%%%%%%%%%%%%%%%%%%%%%%%%%%%%%%%
\noindent {\bf Note:} %When $N$ is odd, it can be verified that $\omega_A$ vanishes identically. 
When $N$ is odd, $\omega_A$ vanishes identically: indeed, $\eta_A$
is invariant under the transposition exchanging its first two labels,
while this transposition is odd.

%%%%%%%%%%%%%%%%%%%%%%%%%%%%%%%%%%%%%%%%%%%%%
\subsubsection{The web $\boldsymbol{\mathcal W}_{0,A}$ and its abelian relations.}
%%%%%%%%%%%%%%%%%%%%%%%%%%%%%%%%%%%%%%%%%%%%%
We fix an even integer $N\geq 4$ and assume throughout this subsection that
$
A=[\hspace{-0.06cm}[N+1]\hspace{-0.06cm}]
=\big\{1,\ldots,N+1\big\}$.\sk 

For every $a\in A$, we set
\[
A_a=A\setminus\{a\}
=
\big\{\,1,\ldots,\widehat{a},\ldots,N+1\,\big\},
\]
endowed with its natural ordering, and denote by
\[
f_a:\boldsymbol{\mathcal M}_{0,A}
\longrightarrow
\boldsymbol{\mathcal M}_{0,A_a}
\]
the forgetful map that forgets the point labelled by $a$.
By definition, the web $\boldsymbol{\mathcal W}_{0,A}$ is the
$(N+1)$-web by rational curves on $\boldsymbol{\mathcal M}_{0,A}$
defined by the forgetful maps $f_a$, namely
\[
\boldsymbol{\mathcal W}_{0,A}
=
\boldsymbol{\mathcal W}
\big(\, f_a \,\big\lvert\, a\in A\,\big).
\]

Recall that a \emph{$k$-abelian relation} for this web on an open subset
$D\subset \boldsymbol{\mathcal M}_{0,A}$ is an $A$-tuple
$(\eta_a)_{a\in A}$, 
where, for each $a\in A$, $\eta_a$ is a differential $k$-form on
$f_a(D)\subset \boldsymbol{\mathcal M}_{0,A_a}$, such that the following
fundamental differential identity holds:
\[
\sum_{a\in A} f_a^*(\eta_a)=0.
\]
We shall often identify an abelian relation $(\eta_a)_{a\in A}$ with the corresponding tuple of
pulled-back forms $\big(f_a^*(\eta_a)\big)_{a\in A}$.

%%%%%%%%%%%%%%%%%%%%%%%%%%%%%%%%%%%%%%%%%%%%%
\subsubsection{The real setting.}
%%%%%%%%%%%%%%%%%%%%%%%%%%%%%%%%%%%%%%%%%%%%%
All the objects considered above, namely configuration spaces,
moduli spaces, forgetful maps, webs, admit real versions. We will use a superscript ${}^{\mathbb R}$ to mean that we are considering the real versions. For instance, $\boldsymbol{\mathcal M}_{0,A}^{\mathbb R}$ stands for the real locus of the complex moduli space $\boldsymbol{\mathcal M}_{0,A}$, the forgetful maps $f_a$ are defined over $\mathbb R$ and therefore admit real versions 
$\boldsymbol{\mathcal M}_{0,A}^{\mathbb R}\rightarrow \boldsymbol{\mathcal M}_{0,A_a}^{\mathbb R}$, again denoted by $f_a$,  which are the first integrals of the real version $\boldsymbol{\mathcal W}_{0,A}^{\mathbb R}$ of the complex web $\boldsymbol{\mathcal W}_{0,A}$. 

In some cases, to simplify notation, the superscript ${}^{\mathbb R}$ will be dropped when there is no ambiguity regarding the setting.  We call the
connected components of $\boldsymbol{\mathcal M}_{0,A}^{\mathbb R}$
its \emph{chambers}. The forgetful maps and the action of
$\mathfrak S_A$ are defined on the whole real moduli space.

The functions $\Log|[ij]|$ are single-valued and real analytic
on the corresponding spaces of lifts. By
Proposition~\ref{Prop:descent}, their antisymmetrized combinations
defining the forms $\omega_{A_a}$ descend to globally defined
real-analytic forms on $\boldsymbol{\mathcal M}_{0,A_a}$.
Thus each $\omega_{A_a}$ is defined on the whole target of $f_a$,
including all its chambers.

Accordingly, the abelian relation constructed below is global
on the whole real moduli space, with components pulled back
from globally defined forms on the respective targets.
Our comparison with Euler's abelian relation obtained in
\cite{M06} will take place in this real-analytic setting.
%\medskip 

%%%%%%%%%%%%%%%%%%%%%%%%%%%%%%%%%%%%%%%%%%%%%
\subsubsection{Main results: differential identity and equivariance.}
%%%%%%%%%%%%%%%%%%%%%%%%%%%%%%%%%%%%%%%%%%%%%
With the notation of the previous subsection, for every $a\in A$ we set 
\[
\omega_a=\omega_{A_a} \, . 
\]

Thus, setting $n=N-2$, the web $\boldsymbol{\mathcal W}_{0,A}$ is naturally identified
with $\boldsymbol{\mathcal W}_{0,n+3}$.

%%%%%%%%%%%%%%%%%%%%%%%%%%%%%%%%%%%%%%%%%%%%%
\begin{thm}
\label{Prop:Differential-Identity} 
%%%%%%%%%%%%%%%%%%%%%%%%%%%%%%%%%%%%%%%
%%%%%%%%%%%%%%%%%%%%%%%%%%%%%%%%%%%%%%%%%%%%%
{\bf 1.} The  forms $\omega_a$ satisfy the differential relation
%%%%%%%%%%%%%%%%%%%%%%%%%%%%%%%%%%%%%%%%%%%%%
\begin{equation}
\label{Eq:sum-(-1)a-omega-a=0}
\sum_{a=1}^{n+3}
(-1)^a f_a^*(\omega_a)=0
\end{equation}
on $\boldsymbol{\mathcal M}_{0,A}$. 
%%%%%%%%%%%%%%%%%%%%%%%%%%%%%%%%%%%%%%%%%%%%%
 Consequently  
%$\boldsymbol{E}_N=\Big(  (-1)^a f_a^*(\omega_a)\Big)_{a\in A}$ can be seen as a global abelian relation for $\boldsymbol{\mathcal W}_{0,A}$
%the $(N+1)$-tuple 
$$\boldsymbol{E}_{n}=\Big(  (-1)^a f_a^*(\omega_a)\Big)_{a\in A}$$ defines a global real-analytic $(n-1)$-abelian relation for $\boldsymbol{\mathcal W}_{0,A}$. 
%hence   
%$\boldsymbol{E}_N=\Big(  (-1)^a f_a^*(\omega_a)\Big)_{a\in A}$ can be seen as a %global abelian relation for $\boldsymbol{\mathcal W}_{0,A}$. 
%%%%%%%%%%%%%%%%%%%%%%%%%%%%%%%%%%%%%%%%%%%%%
\sk 

\noindent 
{\bf 2.} For the natural action of $\mathfrak S_A$ on tuples $R=(R_a)_{a\in A}$ of differential forms given by $(\sigma\cdot R)_a=\sigma^*\big( R_{\sigma^{-1}(a)}\big)$ for $a\in A$ and $\sigma \in \mathfrak S_A$, we have 
\begin{equation}
\label{Eq:signature-invariance}
\sigma\cdot \boldsymbol{E}_n=(-1)^\sigma \boldsymbol{E}_n\qquad \forall\, \sigma\in \mathfrak S_A \, .
%{\bf 2.} For any $\sigma\in {\mathfrak S}_A$, we have $\sigma^*\big( f_a^*(\omega_a)\big)  %=(-1)^\sigma f_{\sigma(a)}^*(\omega_{\sigma(a)})$ for every $a\in A$, that is 
%\begin{equation}
%\label{Eq:signature-invariance}
%\sigma^* \boldsymbol{E}_N=(-1)^\sigma \boldsymbol{E}_N\, .
\end{equation}
%%%%%%%%%%%%%%%%%%%%%%%%%%%%%%%%%%%%%%%%%%%%%
\end{thm}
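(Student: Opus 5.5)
The plan is to lift everything to $\widetilde U_A$, with $A=[\hspace{-0.06cm}[N+1]\hspace{-0.06cm}]$. The quotient map $\widetilde U_A\to\boldsymbol{\mathcal M}_{0,A}$ is a submersion, so pullback along it is injective on forms. It therefore suffices to prove \eqref{Eq:sum-(-1)a-omega-a=0} for the lifted forms. By Proposition~\ref{Prop:descent}, each $\omega_a$ is already a form on $\boldsymbol{\mathcal M}_{0,A_a}$. Its pullback $f_a^*\omega_a$ lifts to the form on $\widetilde U_A$ given by the same expression in the brackets $[ij]$, $i,j\in A_a$, obtained by simply ignoring $v_a$.

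\emph{Step 1: rewrite the sum as one antisymmetrization.} Let $\Psi$ be the form $\eta_{A_{N+1}}$ viewed on $\widetilde U_A$, i.e.
\[
\Psi=\Log|[12]|\bigwedge_{s=3}^{N-1}\dd\Log\frac{[1s][2N]}{[2s][1N]} .
\]
I claim that $\sum_a(-1)^a f_a^*\omega_a=\pm (N+1)\,\Alt_{\mathfrak S_A}(\Psi)$.
\begin{itemize}
\item Split $\mathfrak S_A$ into cosets according to the value $\sigma(N+1)=a$.
\item Each coset has the form $\tau_a\,\mathfrak S_{A_a}$, where $\tau_a$ is the cycle $(a\ a{+}1\ \cdots\ N{+}1)$ or its inverse. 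This cycle is the order-preserving bijection $A_{N+1}\simeq A_a$, and its signature is $(-1)^{N+1-a}=-(-1)^a$, since $N$ is even.
\item Summing over the coset gives $(-1)^{\tau_a}f_a^*\omega_a$. This uses that $\omega_{A_a}$ is defined through the order-preserving identification $A_a\simeq[\hspace{-0.06cm}[N]\hspace{-0.06cm}]$.
\end{itemize}
The main bookkeeping point is checking the direction of the pullback convention $\sigma^*$ and the resulting global sign. I expect only an overall sign to be at stake, and it is irrelevant for vanishing.

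\emph{Step 2: kill $\Alt_{\mathfrak S_A}(\Psi)$ monomial by monomial.} Expand each factor as
\[
\dd\Log\frac{[1s][2N]}{[2s][1N]}=\dd\Log[1s]-\dd\Log[2s]+\dd\Log[2N]-\dd\Log[1N].
\]
Then $\Psi$ becomes a signed sum of monomials $\Log|[12]|\wedge\dd\Log[e_3]\wedge\cdots\wedge\dd\Log[e_{N-1}]$, where each $e_s$ is one of the edges $\{1s\},\{2s\},\{2N\},\{1N\}$. Since $\dd\Log[2N]\wedge\dd\Log[2N]=0$ and likewise for $[1N]$, the edges $\{2N\}$ and $\{1N\}$ each occur at most once among the $e_s$. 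I then use a single observation: if a monomial is fixed by some transposition $(bc)$ of labels, then its antisymmetrization vanishes, because $(bc)^*$ fixes it while $(-1)^{(bc)}=-1$. It is enough to find two labels $b,c$ appearing in none of the brackets of the monomial. Label $N{+}1$ never appears, so I only need one further absent label.
\begin{itemize}
\item If no $e_s$ involves $N$, then $N$ is absent; take $(b,c)=(N,N{+}1)$.
\item Otherwise some $e_{s_0}\in\{\{1N\},\{2N\}\}$. Vertex $s_0$ occurs only in factor $s_0$, so $s_0$ is absent; take $(b,c)=(s_0,N{+}1)$.
\end{itemize}
Hence every monomial is killed, and \eqref{Eq:sum-(-1)a-omega-a=0} follows. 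Note that expanding $\Psi$ into monomials does not descend, but this is harmless because we work on $\widetilde U_A$ and only the total is descended.

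\emph{Step 3: equivariance.} The abelian-relation statement is immediate from \eqref{Eq:sum-(-1)a-omega-a=0}. For part {\bf 2}, take $\sigma\in\mathfrak S_A$. Then $\sigma$ maps the fibres of $f_{\sigma^{-1}(a)}$ to those of $f_a$, so $\sigma^* f_{\sigma^{-1}(a)}^*\omega_{\sigma^{-1}(a)}$ is a pullback by $f_a$. I will reuse the coset description from Step 1: $(-1)^af_a^*\omega_a$ equals, up to the global sign, the partial antisymmetrization
\[
\Big(\sum_{\rho\in\mathfrak S_A,\ \rho(N+1)=a}(-1)^\rho\rho^*\Psi\Big)\Big/N!.
\]
Composing with $\sigma$ carries the coset $\{\rho(N{+}1)=\sigma^{-1}(a)\}$ bijectively onto $\{\rho(N{+}1)=a\}$ and multiplies signatures by $(-1)^\sigma$. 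This gives $(\sigma\cdot\boldsymbol E_n)_a=(-1)^\sigma(\boldsymbol E_n)_a$. The only delicate point, again, is matching the left/right composition convention for $\sigma^*$ with the definition $(\sigma\cdot R)_a=\sigma^*(R_{\sigma^{-1}(a)})$. I would settle this by checking it on a transposition, since transpositions generate $\mathfrak S_A$.
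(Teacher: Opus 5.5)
Your proposal is correct. It shares the skeleton of the paper's first proof: lift to $\widetilde U_A$, split $\mathfrak S_A$ according to the value $\sigma(N+1)=a$, and kill a full antisymmetrization with an odd permutation fixing the form. The vanishing step, however, is organized differently.

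The paper first proves the auxiliary identity $\widetilde\omega_B=(N-2)\Alt_B(\gamma_B)$, by a separate cycle argument inside $\mathfrak S_B$. After that, a single transposition $(N\ N{+}1)$ fixes $\Gamma=F_{N+1}^*(\gamma_{A_{N+1}})$. You keep $\eta$ itself and use the extra label $N+1$ of the larger group $\mathfrak S_A$. Write $\Psi=\Log|[12]|\bigwedge_s(\delta_s-\delta_N)$ with $\delta_r=\dd\Log([1r]/[2r])$. The term without $\delta_N$ is exactly the paper's $\Gamma$, and it is killed by $(N\ N{+}1)$. Each term with $\delta_N$ in slot $s_0$ involves neither $s_0$ nor $N+1$, so it is killed by $(s_0\ N{+}1)$.

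This makes the $\gamma_B$ lemma unnecessary for the theorem, so your route is shorter. The paper's detour pays off elsewhere, since $\widetilde\omega_B=(N-2)\Alt_B(\gamma_B)$ is reused in the symbolic proof and in the comparison of Section~\ref{S:Link-explicit}.

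For equivariance, your coset translation $\rho\mapsto\sigma\rho$ is valid: with the paper's convention one has $\sigma^*\rho^*=(\sigma\rho)^*$. It is more economical than the paper's computation of the signature of the restricted bijection $\overline\sigma_a$ via a determinant expansion, because it recycles Step 1.

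One small slip to fix: with $\tau_a=(a\ a{+}1\ \cdots\ N{+}1)$, the set $\{\sigma:\sigma(N+1)=a\}$ is $\tau_a\,\mathfrak S_{A_{N+1}}=\mathfrak S_{A_a}\,\tau_a$, not $\tau_a\,\mathfrak S_{A_a}$. The signs are unaffected, and the constant comes out as $\sum_a(-1)^aF_a^*\widetilde\omega_a=-(N+1)\Alt_{\mathfrak S_A}(\Psi)$.
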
%%%%%%%%%%%%%%%%%%%%%%%%%%%%%%%%%%%%%%%
%%%%%%%%%%%%%%%%%%%%%%%%%%%%%%%%%%%%%%%%%%%%%

In Section~\ref{S:Proof-Coro-Main}, we prove that the relation
constructed above is a nonzero constant multiple of the
geometric Euler relation, establishing the following result.
%It is known that, up to a nonzero scalar, 
%Euler's abelian relation is the unique abelian relation of
%$\boldsymbol{\mathcal W}_{0,n+3}^{\mathbb R}$ which transforms according to the sign %character under the natural action of
%$\mathfrak S_{n+3}$ (see  \cite[Cor\,5.7.4]{M06}). Therefore, by %Theorem~\ref{Prop:Differential-Identity} and Proposition~\ref{Prop:descent}, the abelian %relation constructed above is, up to a nonzero scalar factor, Euler's abelian relation.
%%%%%%%%%%%%%%%%%%%%%%%%%%%%%%%%%%%%%%%%%%%%%
\begin{thm}%%%%%%%%%%%%%%%%%%%%%%%%%%%%%%%%%%%%%%%
%%%%%%%%%%%%%%%%%%%%%%%%%%%%%%%%%%%%%%%%%%%%%
\label{Coro:main}
By restriction to the real locus, 
$\boldsymbol{E}_{n}$ gives a global real-analytic abelian relation of top degree for $\boldsymbol{\mathcal W}_{0,n+3}^{\mathbb R}$, which coincides with Euler's relation up to multiplication by a nonzero constant. 
%%%%%%%%%%%%%%%%%%%%%%%%%%%%%%%%%%%%%%%%%%%%%
\end{thm}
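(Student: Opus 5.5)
The plan is to combine Theorem~\ref{Prop:Differential-Identity} with a uniqueness statement from \cite{M06}. Recall that Euler's abelian relation $\boldsymbol{\mathcal E}_n$ of $\boldsymbol{\mathcal W}_{0,n+3}^{\mathbb R}$ is, up to a nonzero scalar, characterized among the (global, real-analytic) abelian relations of top degree $n-1$ by its transformation law: it spans the isotypic component of the sign character of $\mathfrak S_{n+3}$ acting on the space of abelian relations (\cite[Cor.\,5.7.4]{M06}). It therefore suffices to check three things about $\boldsymbol{E}_n$: that it is an abelian relation of degree $n-1$, that it transforms by the sign character, and that it is nonzero.

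The first two points are exactly the content of Theorem~\ref{Prop:Differential-Identity}. Part~1 gives $\sum_a (-1)^a f_a^*(\omega_a)=0$. Each $\omega_a=\omega_{A_a}$ has degree $|A_a|-3=N-3=n-1$, which is the top degree $\dim \boldsymbol{\mathcal M}_{0,A_a}$. Part~2 gives $\sigma\cdot \boldsymbol{E}_n=(-1)^\sigma\boldsymbol{E}_n$. The discussion of the real setting guarantees that each component is a globally defined real-analytic form on every chamber of the target. So, after restricting to the real locus, $\boldsymbol{E}_n$ is an element of the sign-isotypic part of the space of global real-analytic abelian relations of $\boldsymbol{\mathcal W}_{0,n+3}^{\mathbb R}$. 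One point to check is that the uniqueness result of \cite{M06} is formulated for the same class of relations, that is, global on the real moduli space rather than merely germs. If it is only stated for germs, I would restrict to a small open set in one chamber. This is harmless, because an abelian relation of this web is determined by its germ: the components are real-analytic on connected targets, and the action of $\mathfrak S_A$ permutes the chambers transitively.

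The third point, nonvanishing, is where I expect the real work. By Proposition~\ref{Prop:descent}(2), each $\omega_a$ is nonzero, since $|A_a|=N$ is even. Because $f_a$ is a submersion, $f_a^*$ is injective on forms, so $f_a^*(\omega_a)\neq 0$ and hence $\boldsymbol{E}_n\neq 0$.

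Granting these three points, uniqueness gives $\boldsymbol{E}_n=c\,\boldsymbol{\mathcal E}_n$ for some constant $c$, and $c\neq0$ because $\boldsymbol{E}_n\neq0$. The main obstacle is making sure the cited characterization really applies: the sign-isotypic component has to be one-dimensional within the space where $\boldsymbol{E}_n$ lives. I would check this against the decomposition of the space of abelian relations in \cite[\S5.7]{M06}, which says the top-degree abelian relations of $\boldsymbol{\mathcal W}_{0,n+3}$ contain the sign representation with multiplicity one when $n$ is even. Identifying the explicit constant $-(n+1)(n+2)/2$ would be a separate computation, comparing one component with formula (5.65) of \cite{M06}, and is not needed for the statement as given.
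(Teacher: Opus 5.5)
There is a genuine gap. The characterization you cite is essentially the whole content of the theorem, and it is not available in the form you need. In \cite{M06} Euler's relation is constructed chamber by chamber. The sign condition under $\mathfrak S_A$ only makes sense for relations defined on all chambers at once, because a permutation $\sigma$ carries a chamber $D$ to another chamber. On $\boldsymbol{AR}(D)$ only the (dihedral) stabilizer of $D$ acts, and on germs at a point even less.

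Your fallback of restricting to one chamber therefore does not work, and its justification is wrong: the targets $X_a=\M^{\mathbb R}_{0,A_a}$ are disconnected. Transitivity on chambers does show that a sign-equivariant global relation is determined by its restriction to one chamber, but that is not the issue. On a chamber one only has $\boldsymbol{AR}(D)=\boldsymbol{AR}_C\oplus\R\,\boldsymbol E_n|_D$. So a chamberwise comparison only gives $\boldsymbol{\mathcal E}_n|_D=c_D\,\boldsymbol E_n|_D+R_D$ with a rational remainder $R_D\in\boldsymbol{AR}_C$, and the stabilizer cannot kill it in general. For $n=2$ the stabilizer is $D_5\subset\mathfrak S_5$, all of whose elements are even. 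A $5$-dimensional irreducible of $\mathfrak S_5$ has character $5,0,1$ on the identity, the $5$-cycles and the double transpositions, so $\boldsymbol{AR}_C$ contains a $D_5$-invariant line. Hence the relations of sign type on $D$ form a two-dimensional space.

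You also take for granted that $\boldsymbol{\mathcal E}_n$ lies in $\boldsymbol{AR}_{\mathrm{glob}}$. That means its components are defined on the whole of each $X_a$, normalized coherently across chambers, and globally sign-equivariant. The paper has to prove this: Lemma~\ref{L:Euler-global-descent} and the facet/Stokes argument following it.

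The missing idea is Proposition~\ref{P:global-sign-uniqueness}, which exploits the compatibility between chambers through shared targets. The argument runs as follows.
\begin{enumerate}
\item You need non-rationality of the $\omega_a$ on every open set (Corollary~\ref{Cor:Second-Point}), together with the rank bound, to get the splitting above for any sign-equivariant $\Psi\in\boldsymbol{AR}_{\mathrm{glob}}$. Nonvanishing, which you single out as the real work, is immediate and not enough.
\item Take consecutive labels $i,j$ of $D$, $\tau=(i\,j)$ and $D'=\tau(D)$. Equivariance gives $c_{D'}=c_D$ and $R_{D'}=-\tau\cdot R_D$.
\item Comparing components on the common targets $f_i(D)=f_i(D')$ and $f_j(D)=f_j(D')$ gives $R_{D'}-R_D\in\boldsymbol A_{\widehat{\imath\jmath}}$.
\item This difference is $\tau$-invariant, whereas $\tau$ acts by $(-1)^{n-1}=-1$ on $\boldsymbol A_{\widehat{\imath\jmath}}$. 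Hence $R_{D'}=R_D$; this is where the parity of $n$ enters.
\item Adjacent exchanges connect all chambers, so one gets a single global $R\in\boldsymbol{AR}_C$. It is sign-equivariant for all of $\mathfrak S_A$, hence zero, since $\boldsymbol{AR}_C$ is irreducible of dimension $n(n+3)/2>1$.
\end{enumerate}
Without this argument, or a precise reference establishing the same global statement, your proof does not go through.
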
%%%%%%%%%%%%%%%%%%%%%%%%%%%%%%%%%%%%%%%%
%%%%%%%%%%%%%%%%%%%%%%%%%%%%%%%%%%%%%%%%%%%%%

%\newpage 

In \cite[\S5.9.2]{M06}, we gave a conjectural explicit formula for the components of a
nonzero multiple $\boldsymbol{\mathcal E}'_n$ of Euler's abelian relation, namely the abelian
relation \cite[(5.64)]{M06}, and, by symbolic computations, we verified it for every even
$n\leq 12$. Here we prove this for every even $n\geq 2$; more precisely, we prove the following.
%%%%%%%%%%%%%%%%%%%%%%%%%%%%%%%%%%%%%%%%%%%%%
\begin{cor}
\label{Cor:comparison}
As abelian relations for $\boldsymbol{\mathcal W}^{\mathbb R}_{0,A}$, $\boldsymbol{E}_{n}$ and
$\boldsymbol{\mathcal E}'_{n}$ coincide up to multiplication by a nonzero scalar. More precisely,
$$
\boldsymbol{\mathcal E}'_{n}=-\frac{(n+1)(n+2)}{2}\,\boldsymbol{E}_{n}\, .
$$
\end{cor}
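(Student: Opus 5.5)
The plan is to compare $\boldsymbol{\mathcal E}'_n$ and $\boldsymbol{E}_n$ through a single component, after using symmetry to reduce to that component. By Theorem~\ref{Coro:main}, $\boldsymbol{E}_n$ is a nonzero multiple of Euler's relation. By \cite[(5.64)]{M06}, $\boldsymbol{\mathcal E}'_n$ is also an abelian relation which (once the conjectured formula is granted for its components) is a nonzero multiple of Euler's relation. Alternatively, one can argue directly that both relations transform by the sign character under $\mathfrak S_A$: for $\boldsymbol{E}_n$ this is part 2 of Theorem~\ref{Prop:Differential-Identity}. One then invokes the uniqueness, up to scalar, of the sign-isotypic abelian relation \cite[Cor.~5.7.4]{M06}. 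Either way we get $\boldsymbol{\mathcal E}'_n=c\,\boldsymbol{E}_n$ for some constant $c$. It remains to show $c=-(n+1)(n+2)/2$. By equivariance, it suffices to compare one component, say the one indexed by $a=N+1$, and there to compare a single coefficient.

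\textbf{Normalizing the component.} We work in the affine chart of \cite[\S5.9.2]{M06}: $v_1=(1,0)$, $v_2=(0,1)$, $v_3=(1,1)$, and $v_s=(1,x_s)$ for the remaining labels. In this chart, $f_{N+1}^*\omega_{N+1}$ is the antisymmetrization over $\mathfrak S_N$ of
\[
\Log|[12]|\bigwedge_{s=3}^{N-1}\dd\Log\frac{[1s][2N]}{[2s][1N]}.
\]
The descent of Proposition~\ref{Prop:descent} lets us choose lifts freely. We expand $\Alt_N(\eta_{A_{N+1}})$ as a sum over $\sigma$ of $(-1)^\sigma\Log|[\sigma_1\sigma_2]|$ times wedges of $\dd\Log$ of cross-ratios. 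Collecting by the pair $\{\sigma_1,\sigma_2\}$ gives an expression of the form
\[
\sum_{i<j}\Log|[ij]|\cdot\Theta_{ij},
\]
where each $\Theta_{ij}$ is an explicit holomorphic logarithmic $(N-3)$-form. Each $\Theta_{ij}$ is itself an antisymmetrization, over the complementary labels, of wedge products of cross-ratio log-forms, and the factor $2$ from $\sigma_1\leftrightarrow\sigma_2$ is absorbed.

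\textbf{Extracting the constant.} The conjectured formula (5.65) has exactly the same shape: logarithms of $|x_i-x_j|$ (or of cross-ratios) times rational $(n-1)$-forms. I will match one monomial. A convenient choice is the coefficient of $\Log|x_{N}-x_{N-1}|$ (or of $\Log|[12]|$, whichever is simplest in the chart), evaluated on the standard logarithmic volume form $\bigwedge\dd\Log x_s$ or on a single residue. The cleanest route is to take iterated residues along boundary divisors of $\overline{\boldsymbol{\mathcal M}}_{0,N}$, which reduces the antisymmetrized wedge of $\dd\Log$'s to counting permutations with signs.

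I expect the main obstacle to be this step: computing $\Theta_{ij}$, or one of its residues, in closed form. The antisymmetrization of $\bigwedge_s\dd\Log\frac{[1s][2N]}{[2s][1N]}$ produces many cancelling terms. To handle them, I will use the identity that this wedge equals, up to sign, the pullback of the volume form $\bigwedge\dd\Log$ of cross-ratios with base points $1,2,N$. I will then use Arnold-type relations to express the alternating sum over the remaining labels as a combinatorial count. I expect that count to produce $\binom{N}{2}=\frac{(n+1)(n+2)}{2}$, reflecting the choice of the pair $\{\sigma_1,\sigma_2\}$ among the $N=n+2$ labels, together with a sign coming from the ordering conventions of \cite{M06}. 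Fixing that sign will require checking one case directly, for instance $n=2$ against Abel's five-term relation (the dilogarithm case). That check also serves as a sanity test of the whole normalization.
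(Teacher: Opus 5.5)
\textbf{The gap is in your first step.} You obtain $\boldsymbol{\mathcal E}'_n=c\,\boldsymbol E_n$ either by granting the conjectured formula or by applying sign-isotypic uniqueness to $\boldsymbol{\mathcal E}'_n$, and both are circular. Here $\boldsymbol{\mathcal E}'_n$ is the tuple whose $i$th component is $\big(C^{\circ(i-1)}\big)^*\big(\varepsilon_{n-1}(x')\,\dd x_2\wedge\cdots\wedge\dd x_n\big)$, with $\varepsilon_{n-1}$ the explicit function \eqref{Eq:epsilon-M06}. That this tuple is an abelian relation at all, let alone a multiple of Euler's relation, is precisely the conjecture of \cite[\S5.9.2]{M06}, checked there only for $n\leq 12$.

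A uniqueness statement such as Proposition~\ref{P:global-sign-uniqueness} applies only to an element already known to lie in $\boldsymbol{AR}_{\mathrm{glob}}$ and to transform by the sign character under all of $\mathfrak S_A$. You prove neither property for $\boldsymbol{\mathcal E}'_n$. Even sign-equivariance under permutations fixing the label $1$ but moving the three normalized labels would require projective invariance of the chart expression. Without a priori proportionality, matching one monomial or one residue proves nothing. If you instead read (5.64) as a relation with an a priori unknown component, knowing that this component is proportional to $\omega_B$ still says nothing about the explicit function \eqref{Eq:epsilon-M06}.

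What is needed is the full identity of forms $\varepsilon_{n-1}(x')\,\dd x_2\wedge\cdots\wedge\dd x_n=\frac{(n+1)(n+2)}{2}\,\omega_B$ on the chart. Once you have it, no uniqueness argument is needed. $C$ is the relabelling by the $(n+3)$-cycle, which is even, so part~2 of Theorem~\ref{Prop:Differential-Identity} transports the first component to all the others. The sign then comes from $(\boldsymbol E_n)_1=-f_1^*(\omega_1)$.

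\textbf{The missing idea for the computation.} The step you flag as the main obstacle is therefore unavoidable, and the idea that makes it short is missing from your plan.

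\emph{Reduction.} Use $\widetilde\omega_B=(N-2)\Alt_B(\gamma_B)$, where $\gamma_B=\Log|[12]|\bigwedge_{r=3}^{N-1}\dd\Log\frac{[1r]}{[2r]}$ does not involve the last label. Lift with that label at infinity: $v_r=(y_r,1)$ for $r\leq m=N-1$, $v_N=(1,0)$, $y_{m-1}=0$, $y_m=1$. Then every term with $\sigma(N)\neq N$ vanishes, and $\omega_B$ becomes $\frac{N-2}{N}\Alt_m(\Omega_{12})$ restricted to this section. Here $\Omega_{12}=\Log|y_1-y_2|\,\Theta_{12}$ and $\Theta_{k\ell}=\bigwedge_{s\neq k,\ell}\dd\Log\frac{y_s-y_k}{y_s-y_\ell}$.

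\emph{Key point.} Each $\Theta_{k\ell}$ lies in the one-dimensional space $\wedge^{m-2}W$ of forms annihilated by $T=\sum_r\partial_{y_r}$ and $E=\sum_r y_r\partial_{y_r}$. It is therefore a rational multiple of $\Xi=\iota_E\iota_T(\dd y_1\wedge\cdots\wedge\dd y_m)$, and a single evaluation gives $\Theta_{k\ell}=(-1)^{\ell-k-1}K_{k\ell}\,\Xi$ with $K_{k\ell}=y_{k\ell}^{\,n}/(M_kM_\ell)$.

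\emph{Matching.} A Vandermonde-minor identity rewrites \eqref{Eq:epsilon-M06} as $\sum_{k<\ell}K_{k\ell}\Log|y_{k\ell}|$. Since $m-2$ is odd, the antisymmetrization of $\Omega_{12}$ equals $2(m-2)!\sum_{k<\ell}(-1)^{\ell-k-1}\Log|y_{k\ell}|\,\Theta_{k\ell}$. This yields the constant $\frac{N}{N-2}\binom{m}{2}$, which equals your guessed $\binom{N}{2}$. No residues or Arnold relations are needed.

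\textbf{Sign and chart.} A check at $n=2$ cannot fix the sign for all even $n$. Your chart is also not that of \cite[\S5.9.2]{M06}. There the labels $1,\ldots,n+3$ sit at $x_1,\ldots,x_n,0,1,\infty$, so the normalized points carry the last three labels, and the component to compare is the one indexed by $1$. Normalizing labels $1,2,3$ instead introduces a relabelling whose signature you would have to track.
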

\vspace{0.1cm}

%In \cite[\S5.9.2]{M06}, we gave a conjectural explicit formula 
%$\boldsymbol{{\mathcal E}}_n'$
%for a nonzero multiple $\boldsymbol{\mathcal E}_n$ of Euler's abelian relation and by formal %computations, we proved that this formula indeed holds true for all even integers $n\leq 12$. 
%
%Here we prove that for any $n\geq 2$ even, we have 
%%%%%%%%%%%%%%%%%%%%%%%%%%%%%%%%%%%%%%%%%%%%%
%\begin{cor} 
%\label{Cor:comparison}
%%%%%%%%%%%%%%%%%%%%%%%%%%%%%%%%%%%%%%%
%%%%%%%%%%%%%%%%%%%%%%%%%%%%%%%%%%%%%%%%%%%%%
 %As abelian relations for $\boldsymbol{\mathcal W}^{\mathbb R}_{0,A}$, $\boldsymbol{E}%_{n}$ and $\boldsymbol{\mathcal E}_{n}'$ coincide up to multiplication by a nonzero scalar. 
 %Actually, we have 
%$$
%{}^{} \qquad  \boldsymbol{E}_{n}= \frac{2}{(n+1)(n+2)} \, 
%\boldsymbol{\mathcal E}_{n}'\, . 
%\qquad (sign\,?)
%$$
%\smallskip 
%%%%%%%%%%%%%%%%%%%%%%%%%%%%%%%%%%%%%%%%%%%%%
%\end{cor} %%%%%%%%%%%%%%%%%%%%%%%%%%%%%%%%%%%%%%%%
%%%%%%%%%%%%%%%%%%%%%%%%%%%%%%%%%%%%%%%%%%%%%
%\vspace{0.1cm} 

%%%%%%%%%%%%%%%%%%%%%%%%%%%%%%%%%%%%%%%%%%%%%
\begin{rem}%%%%%%%%%%%%%%%%%%%%%%%%%%%%%%%%%%%%%%%
%%%%%%%%%%%%%%%%%%%%%%%%%%%%%%%%%%%%%%%%%%%%%
Dropping the absolute value in the definition of $\eta_A$ and defining
$\omega_A$ as in \eqref{Eq:OmegaA-Definition}  would give a holomorphic
but potentially multivalued differential form on the corresponding
complex moduli space.
Working locally, or on suitable domains, and choosing compatible
branches of the logarithms appearing in the definition of $\omega_a$
on $\boldsymbol{\mathcal M}_{0,A_a}$ for each $a\in A$, one could obtain
a tuple $\big((-1)^a f_a^*(\omega_a)\big)_{a\in A}$ defining a
holomorphic, potentially multivalued $(n-1)$-abelian relation for the
complex web $\boldsymbol{\mathcal W}_{0,A}$.
For simplicity, and to save space, we have decided not to pursue this
in the present paper, as our main interest is the connection with
Euler's abelian relation studied in
\cite{M06}, which is real by construction.
\sk

Combined with Theorem~\ref{Coro:main}, we expect this approach to show that, for every positive
even $n$, the complexification of Euler's abelian relation extends to a global multivalued abelian
relation on $\boldsymbol{\mathcal M}_{0,n+3}$ with unipotent monodromy (analytic continuation only
changes the logarithms by constants in $2i\pi\mathbf Z$, hence the relation by a rational, i.e.\
combinatorial, abelian relation), as conjectured in the first part of \cite[\S5.7.1]{M06}.
%%%%%%%%%%%%%%%%%%%%%%%%%%%%%%%%%%%%%%%%%%%%%
\end{rem}%%%%%%%%%%%%%%%%%%%%%%%%%%%%%%%%%%%%%%%%
%%%%%%%%%%%%%%%%%%%%%%%%%%%%%%%%%%%%%%%%%%%%%

%%%%%%%%%%%%%%%%%%%%%%%%%%%%%%%%%%%%%%%%%%%%%
\subsubsection{Organization of the paper.}
%%%%%%%%%%%%%%%%%%%%%%%%%%%%%%%%%%%%%%%%%%%%%

Section \ref{S:Proof-Prop-1} proves the first point of Proposition~\ref{Prop:descent}, namely the descent of $\omega_A$ to $\boldsymbol{\mathcal M}_{0,A}$; the
second point is proved in Section~\ref{S:Link-explicit} (Corollary \ref{Cor:Second-Point}). Section  \ref{S:Proof-of-Theorem} contains two proofs of identity \eqref{Eq:sum-(-1)a-omega-a=0},
a conceptual antisymmetrization argument and a symbolic one, together with the proof of the
equivariance property. 

In Section \ref{S:Proof-Coro-Main}, we show that $\boldsymbol{E}_n$ spans the sign-isotypic part of the space
of global real abelian relations of 
$\boldsymbol{\mathcal W}_{0,A}^{\mathbb R}$, and we deduce Theorem \ref{Coro:main}. 
\sk

Finally, in Section~\ref{S:Link-explicit}, we 
compare the antisymmetrized expression obtained here with the explicit formula conjectured in \cite[\S5.9.2]{M06} and prove Corollary \ref{Cor:comparison}.

\mk

%%%%%%%%%%%%%%%%%%%%%%%%%%%%%%%%%%%%%%%%%%%%%
\paragraph{\bf Acknowledgement.}%%%%%%%%%%%%%%%%%%%%%%%%%%
%%%%%%%%%%%%%%%%%%%%%%%%%%%%%%%%%%%%%%%%%%%%%
This work was carried out during a six-month stay in Tokyo as a member of the
\href{https://fj-lmi.cnrs.fr}{FJ-LMI} (IRL 2025), an International Research Laboratory jointly operated by CNRS and The University of Tokyo. The author is very grateful for the 
excellent working conditions enjoyed during this stay.
%%%%%%%%%%%%%%%%%%%%%%%%%%%%%%%%%%%%%%%%%%%%%
%%%%%%%%%%%%%%%%%%%%%%%%%%%%%%%%%%%%%%%%%%%%%
%%%%%%%%%%%%%%%%%%%%%%%%%%%%%%%%%%%%%%%%%%%%%
\mk

%%%%%%%%%%%%%%%%%%%%%%%%%%%%%%%%%%%%%%%%%%%%%
\paragraph{\bf Note on the use of AI}
%%%%%%%%%%%%%%%%%%%%%%%%%%%%%%%%%%%%%%%%%%%%%
We briefly describe the role of AI in the genesis of this work. Motivated by the formulas obtained in \cite{PiriodP,CP,WGM} and by the conjectural formula in \cite{M06}, we used ChatGPT 5.6 to explore possible antisymmetrized expressions for Euler's abelian relation and to search for proofs of the corresponding differential identity and equivariance property.

The resulting arguments were subsequently checked, corrected, completed, and rewritten in rigorous mathematical form by the author. AI was used again at a later stage to assist with the finalization and proofreading of the text.

%\newpage
%%%%%%%%%%%%%%%%%%%%%%%%%%%%%%%%%%%%%%%%%%%%%%%
%%%%%%%%%%%%%%%%%%%%%%%%%%%%%%%%%%%%%%%%%%%%%%%
\section{\bf Descent of $\omega_A$ to $\mathcal M_{0,A}$}
\label{S:Proof-Prop-1}
%%%%%%%%%%%%%%%%%%%%%%%%%%%%%%%%%%%%%%%%%%%%%%%
%%%%%%%%%%%%%%%%%%%%%%%%%%%%%%%%%%%%%%%%%%%%%%%
When $N$ is odd, $\omega_A\equiv 0$ (see the Note after Proposition~\ref{Prop:descent}), so there is nothing to prove.
We therefore assume throughout this section that $N\geq 4$ is even, identify $A$ with $\{1,\ldots,N\}$, and put $d=N-3$, which is odd. 

For any $\sigma\in{\mathfrak S}_N$, we set
%\[
%\beta_\sigma=
%\bigwedge_{s=3}^{N-1}
%\dd\Log\left(
%\frac{
%[\sigma(1)\sigma(s)]\,[\sigma(2)\sigma(N)]
%}{[\sigma(2)\sigma(s)]\,[\sigma(1)\sigma(N)]
%}\right).
%\]
\begin{equation}
\label{Eq:b-sigma}
\beta_\sigma
=
\bigwedge_{s=3}^{N-1}
\dd\Log\left(
\frac{
[\sigma_1\sigma_s]\,[\sigma_2\sigma_N]
}{
[\sigma_2\sigma_s]\,[\sigma_1\sigma_N]
}
\right).
\end{equation}

Since $\beta_\sigma= \sigma^*\big( \beta_{{\bf Id}}\big) $ for any $\sigma$, we have 
\begin{equation}
\label{Eq:omega-A}
\omega_A
=
\frac1{N!}
\sum_{\sigma\in{\mathfrak S}_N}
(-1)^\sigma\,
\Log\,\big\lvert\,[\sigma_1\sigma_2]\, \big\rvert\,
\beta_\sigma.
%\tag{1}
\end{equation}
%%%%%%%%%%%%%%%%%%%%%%%%%%%%%%%%%%%%%%%%%%
\subsection{\bf Proof of the first point of Proposition \ref{Prop:descent}}
%%%%%%%%%%%%%%%%%%%%%%%%%%%%%%%%%%%%%%%%%%
We aim to prove that $\omega_A$ is basic for the successive quotient maps $\widetilde U_A\rightarrow U_A\rightarrow \boldsymbol{\mathcal M}_{0,A}$ introduced above.
\sk

First, note that for any four distinct labels $i,j,k,l\in A$, the function
\[
R_{ijkl}
=
\frac{[ik][jl]}{[jk][il]}
\]
is a cross-ratio, hence an invertible regular function on
$\boldsymbol{\mathcal M}_{0,A}$. Consequently,  the differential form 
$
\dd\Log R_{ijkl}
= {\dd R_{ijkl}}/{R_{ijkl}}
$
is intrinsically defined on $\boldsymbol{\mathcal M}_{0,A}$.

In particular, for every $\sigma\in\mathfrak S_N$, the form
\eqref{Eq:b-sigma} is the pull-back of a regular rational
differential $(N-3)$-form on $\M_{0,A}$.
Hence each $\beta_\sigma$ is horizontal with respect to the
successive group actions defining the quotients 
$\widetilde U_A\to U_A\to\M_{0,A}$.
Thus, in order to prove that $\omega_A$ is basic, it remains only
to check its invariance under these actions. The only possible
obstruction to this invariance comes from the logarithmic
coefficients $\Log|[\sigma_1\sigma_2]|$, for $\sigma\in\mathfrak S_N$. 
%In particular, for every $\sigma\in{\mathfrak S}_N$, the form \eqref{Eq:b-sigma}
%is the pull-back of a regular rational differential $(N-3)$-form on
%$\boldsymbol{\mathcal M}_{0,A}$. Thus the only possible obstruction
%to the descent of 
%$\omega_A$  comes from the logarithmic coefficients
%$\Log\big\lvert[\sigma_1\sigma_2]\big\lvert$, for $\sigma\in{\mathfrak S}_N$. 
\medskip

Given $\lambda=({\lambda}_a)_{a\in A}\in (\mathbf C^*)^A $ arbitrary, we 
consider the rescaling $$R_\lambda: \widetilde U_A
\longrightarrow \widetilde U_A: \, 
v=(v_a)_{a\in A}\longmapsto v'=\big({\lambda}_a v_a \big)_{a\in A}\, , $$
 and below we set  $\omega_A'=R_\lambda^*\big( \omega_A\big)$. 
%Given $v=(v_a)_a\in \widetilde U_A$ and $({\lambda}_a)_{a\in A}\in (\mathbf C^*)^A $ arbitrary, we %set $v'=\big({\lambda}_av_a \big)_{a\in A}$ and below we denote by $\omega_A$ and $\omega_A'$ %the differential forms defined by formula \eqref{Eq:omega-A} at $v$ and $v'$ respectively. 
 Let us prove that $\omega_A=\omega_A'$, which will show that $\omega_A$ descends to $U_A$.  
  We set $ \ell_a={\rm Log}\,\lvert \, {\lambda}_a\,\lvert $ for every $a\in A$. 
Since
$
[ij]'={\rm det}(v_i',v_j') =
{\lambda}_i{\lambda}_j [ij]$ for every $i,j=1,\ldots,N$, 
all the cross-ratios occurring in the $\beta_\sigma$'s remain
unchanged, whereas
\[
\Log\, \big\lvert\, [ij]'\, \big\rvert
=
\Log\,\big\lvert\, [ij]\,\big\rvert+\ell_i+\ell_j.
\]
It follows that 
%$
%\omega_A-\omega_A' 
%= 
%(1/{N!})
%\sum_{\sigma\in{\mathfrak S}_N}
% (-1)^\sigma
%\bigl(
%\ell_{\sigma(1)}+\ell_{\sigma(2)}
%\bigr)\, 
%\beta_\sigma
%$.
\[
\omega'_A-\omega_A 
= \frac1{N!}
\sum_{\sigma\in{\mathfrak S}_N}
 (-1)^\sigma
\bigl(
\ell_{\sigma(1)}+\ell_{\sigma(2)}
\bigr)\, 
\beta_\sigma.
\]

We are therefore reduced to proving that 
for arbitrary scalars $c_1,\ldots,c_N$, we have 
%%%%%%%%%%%%%%%%%%%%%%%%%%%%%%%%%%%%%%%
\begin{equation}%%%%%%%%%%%%%%%%%%%%%%%%%%%%%%
%%%%%%%%%%%%%%%%%%%%%%%%%%%%%%%%%%%%%%%
\label{Eq:MRP}
\sum_{\sigma\in{\mathfrak S}_N}
 (-1)^\sigma 
\big(
c_{\sigma_1}+c_{\sigma_2}
\big) \, 
\beta_\sigma
=
0.
%%%%%%%%%%%%%%%%%%%%%%%%%%%%%%%%%%%%%%%
\end{equation}%%%%%%%%%%%%%%%%%%%%%%%%%%%%%%%
%%%%%%%%%%%%%%%%%%%%%%%%%%%%%%%%%%%%%%%
Since the left-hand side is linear in the $c_i$'s, it suffices to
prove that, for every fixed label $a\in A$, we have 
\[
S_a
=
\sum_{ \sigma\in \Sigma_a}
%{\substack{\sigma\in{\mathfrak S}_N\\
%a\in\{\sigma_1,\sigma_2\}}}
(-1)^\sigma  
\beta_\sigma
=
0\, , 
%\tag{4}
\]
where we sum over the set $\Sigma_a\subset \mathfrak S_N$ of permutations $\sigma $ such that $
a\in\{\sigma_1,\sigma_2\}$.  
By relabelling, it is enough to establish that $S_1=0$. 
\sk

%We have
%\[
%\Sigma_1=\Sigma_{11}\sqcup\Sigma_{12},
%\qquad
%\Sigma_{1k}
%=
%\{\sigma\in\Sigma_1\mid \sigma(k)=1\},
%\quad k=1,2.
%\]
We have $\Sigma_1=\Sigma_{11}\sqcup \Sigma_{12}$ where $\Sigma_{1k}$ stands for the set of $\sigma\in \Sigma_1$ such that $\sigma(k)=1$. 
Observe that interchanging the first two entries of a
permutation inverts every cross-ratio occurring in any $\beta_\sigma$.
Hence each logarithmic differential changes sign, and, since $d$ is odd, we have 
%%%%%%%%%%%%%%%%%%%%%%%%%%%%%%%%%%%%%%%%%%%
\[%%%%%%%%%%%%%%%%%%%%%%%%%%%%%%%%%%%%%%%%%%
%%%%%%%%%%%%%%%%%%%%%%%%%%%%%%%%%%%%%%%%%%%
\beta_{\sigma\circ(12)}
=
(-1)^d\beta_\sigma
=
-\beta_\sigma \, .
\]%%%%%%%%%%%%%%%%%%%%%%%%%%%%%%%%%%%%%%%%%%
%%%%%%%%%%%%%%%%%%%%%%%%%%%%%%%%%%%%%%%%%%%
At the same time,  
the signature of $\sigma\circ (12)$ is the opposite of that of $\sigma$, 
%$(-1)^{(12)\circ\sigma}=-(-1)^\sigma$, 
  thus the signed contribution
$(-1)^\sigma\, \beta_\sigma
$ 
is unchanged when the labels occupying the first two positions
are interchanged. In particular, the contributions with the
label $1$ in the first position and those with the label $1$ in
the second position coincide:  we have 
$$
\sum_{ \sigma\in \Sigma_{11}}
(-1)^\sigma  
\beta_\sigma
=\sum_{ \sigma\in \Sigma_{12}}
(-1)^\sigma  
\beta_\sigma
\qquad \mbox{ hence } \qquad S_1= 2\sum_{ \sigma\in \Sigma_{11}}
(-1)^\sigma  
\beta_\sigma\, . 
$$

Put
\[
B=\{2,\ldots,N\},
\qquad
\mbox{ and } 
\qquad 
d=N-3\, .
\]
Now fix distinct $b,b'\in B$, and write the remaining labels in
increasing order as
\[
B\setminus\{b,b'\}
=
\{b_1<\cdots<b_d\}.
\]
Define
\[
\beta_{b b'}
=
\bigwedge_{\nu=1}^{d}
\dd\Log
\left(
\frac{[1b_\nu][bb']}
     {[bb_\nu][1b']}
\right).
\]
We denote by $\sigma_{bb'}\in{\mathfrak S}_N$ the unique 
 permutation such that 
 $$
 \bigl(
\sigma_{bb'}(i)
\bigr)_{i=1}^N = 
 \big(1,b,b_1,\ldots,b_d,b'\big)\, .
 $$
 We have $\sigma_{bb'}\in \Sigma_{11}$ and $\beta_{b b'}=\beta_{\sigma_{bb'}}$. Denote by 
 $\Sigma_{11}^{bb'}$ the set of permutations $\sigma\in \Sigma_{11}$ such that $\sigma(2)=b$ and $\sigma(N)=b'$. Then  $\Sigma_{11}^{bb'}= {\mathfrak S}_{\{b_1,\ldots, b_d\}}\cdot \sigma_{bb'}$. An element $\sigma\in  \Sigma_{11}^{bb'}$ corresponds to a certain ordering of the labels $b_1,\ldots,b_d$ and 
 if the order of two such labels is interchanged, the signature
of the permutation changes sign, and the corresponding wedge
product changes sign as well. We thus have $(-1)^\sigma \beta_\sigma=(-1)^{\sigma_{bb'}} \beta_{bb'}$  for any such $\sigma$, from which it follows that 
\begin{equation}
\label{Eq:S1:2}
S_1 = 2d!
\sum_{\substack{b,b'\in B\\ b\neq b'}}
(-1)^{\sigma_{bb'}}\,\beta_{bb'}.
\end{equation}

%such that
%\[
%\sigma_{bb'}(1)=1,
%\quad
%\sigma_{bb'}(2)=b,
%\quad 
%\sigma_{b,b'}(N)=b' 
%\qquad \mbox{and} 
%\qquad 
%\sigma_{bb'}(2+\nu)=j_\nu
%\qquad\mbox{ for } \nu=1,\ldots,d\, .\]

We next fix $b\in B$, write
$$
B\setminus\{b\}
=
\big\{\,\kappa_1<\cdots<\kappa_{d+1}\, \big\}\, , $$ and 
set
%%%%%%%%%%%%%%%%%%%%%%%%%%%%%%%%%%%%%%%
$$
\Delta_b
= \wedge_{s=2}^{d+1} 
(u_s-u_1) 
\qquad 
\mbox{ with } 
\qquad 
u_s
=
\dd\Log\left(
\frac{[1\kappa_s]}{[b\kappa_s]}
\right)\quad \mbox{for } s=1, \ldots,d+1\,. 
$$
%%%%%%%%%%%%%%%%%%%%%%%%%%%%%%%%%%%%%%%

If $b'=\kappa_r$, then $\beta_{bb'}
=
\bigwedge_{\substack{s=1\\s\neq r}}^{d+1}
(u_s-u_r)$, 
%\[
%\beta_{bb'}
%=
%\bigwedge_{\substack{s=1\\s\neq r}}^{d+1}
%(u_s-u_r),
%\]
where the factors are ordered according to increasing $s$.
We claim that
%%%%%%%%%%%%%%%%%%%%%%%%%%%%%%%%%%%%%%%%%%%%%%
\begin{equation}%%%%%%%%%%%%%%%%%%%%%%%%%%%%%%%%%%%%%
\label{Eq:claim-beta-bb'}
%%%%%%%%%%%%%%%%%%%%%%%%%%%%%%%%%%%%%%%%%%%%%%
\beta_{bb'}=\beta_{b\kappa_r}
=
(-1)^{r-1}\Delta_b.
%%%%%%%%%%%%%%%%%%%%%%%%%%%%%%%%%%%%%%%%%%%%%%
\end{equation}%%%%%%%%%%%%%%%%%%%%%%%%%%%%%%%%%%%%%
%%%%%%%%%%%%%%%%%%%%%%%%%%%%%%%%%%%%%%%%%%%%%%
For $r=1$ this is immediate. If $r>1$, we put $w_s=u_s-u_1$ for $s=2,\ldots,d+1$.
%\[
%w_q=u_q-u_1,
%\qquad q=2,\ldots,d+1.
%\]
Then
$
u_1-u_r=-w_r$ and 
$u_s-u_r=w_s-w_r$ for $s \in\{2,\ldots,d+1\}\setminus \{ r\}$. 
Thus
\[
\beta_{b\kappa_r}
=
(-w_r)
\wedge
(w_2-w_r)
\wedge\cdots\wedge
(w_{r-1}-w_r)
\wedge
(w_{r+1}-w_r)
\wedge\cdots\wedge
(w_{d+1}-w_r).
\]
Every term in the expansion in which one chooses a second copy of
$w_r$ vanishes. Hence
\[
\beta_{b\kappa_r}
=
-w_r\wedge
w_2\wedge\cdots\wedge w_{r-1}
\wedge
w_{r+1}\wedge\cdots\wedge w_{d+1}.
\]
Moving $w_r$ from the first position to its natural position
requires $r-2$ transpositions, and therefore
\[
\beta_{b\kappa_r}
=
(-1)^{r-1}
w_2\wedge\cdots\wedge w_{d+1}
=
(-1)^{r-1}\Delta_b.
\]

It remains to determine the sign
$(-1)^{\sigma_{b\kappa_r}}$. By definition,
$\sigma_{b\kappa_r}$ is represented in one-line notation by
\[
\big(\, 1,b,\kappa_1,\ldots,\widehat{\kappa_r},\ldots,\kappa_{d+1},\kappa_r\,\big).
\]
Since the initial $1$ contributes no inversions, we may compute
the signature inside the ordered set $B=\{2,\ldots,N\}$. Starting from the increasing list
$(2,3,\ldots,N)$,  placing $b$ in the first position of this list requires $b-2$
adjacent transpositions. Once $b$ has been removed, the element
$\kappa_r$ occupies the $r$-th position in the increasing list
$
(\kappa_1,\ldots,\kappa_{d+1})$, 
and moving it to the last position requires $
(d+1)-r=N-2-r$ 
adjacent transpositions. Therefore
%\[
%(-1)^{\sigma_{b\kappa_r}}
%=
%(-1)^{(b-2)+(N-2-r)}=(-1)^{b-r}\, , 
%\]
%
%%%%%%%%%%%%%%%%%%%%%%%%%%%%%%%%%%%%%%%%
\begin{equation}%%%%%%%%%%%%%%%%%%%%%%%%%%%%%%%
\label{Eq:signat}%%%%%%%%%%%%%%%%%%%%%%%%%%%%%%%
%%%%%%%%%%%%%%%%%%%%%%%%%%%%%%%%%%%%%%%%
(-1)^{\sigma_{b\kappa_r}}
=
(-1)^{(b-2)+(N-2-r)}=(-1)^{b-r}\, , 
%%%%%%%%%%%%%%%%%%%%%%%%%%%%%%%%%%%%%%%%
\end{equation}%%%%%%%%%%%%%%%%%%%%%%%%%%%%%%%%
%%%%%%%%%%%%%%%%%%%%%%%%%%%%%%%%%%%%%%%%
with the second equality following from the fact that  $N$ is even. 

Combining \eqref{Eq:claim-beta-bb'} and \eqref{Eq:signat}, we obtain
\[
(-1)^{\sigma_{b\kappa_r}} \beta_{b\kappa_r}
=
(-1)^{b-r}(-1)^{r-1}\Delta_b
=
(-1)^{b-1}\Delta_b,
\]
which is independent of $r$. There are $d+1=N-2$ possible values of $r$, so \eqref{Eq:S1:2}  becomes
%%%%%%%%%%%%%%%%%%%%%%%%%%%%%%%%%%%%%%%%
\begin{equation}%%%%%%%%%%%%%%%%%%%%%%%%%%%%%%%%
%%%%%%%%%%%%%%%%%%%%%%%%%%%%%%%%%%%%%%%%
\label{Eq:S1-Delta-b}
S_1
=
2(N-2)d!
\sum_{b=2}^{N}
(-1)^{b-1}\Delta_b.
%%%%%%%%%%%%%%%%%%%%%%%%%%%%%%%%%%%%%%%%
\end{equation}%%%%%%%%%%%%%%%%%%%%%%%%%%%%%%%%
%%%%%%%%%%%%%%%%%%%%%%%%%%%%%%%%%%%%%%%%
Consequently, it remains only to prove that 
%%%%%%%%%%%%%%%%%%%%%%%%%%%%%%%%%%%%%%%%
\begin{equation}%%%%%%%%%%%%%%%%%%%%%%%%%%%%%%%%
%%%%%%%%%%%%%%%%%%%%%%%%%%%%%%%%%%%%%%%%
\label{Eq:Sum-Delta-b}
\sum_{b=2}^{N}
(-1)^{b-1}\Delta_b
=
0.
%%%%%%%%%%%%%%%%%%%%%%%%%%%%%%%%%%%%%%%%
\end{equation}%%%%%%%%%%%%%%%%%%%%%%%%%%%%%%%%
%%%%%%%%%%%%%%%%%%%%%%%%%%%%%%%%%%%%%%%%

By construction,
$
u_s-u_1
=
\dd\Log
\left( {[1\kappa_s][b\kappa_1]} / \big( 
     {[b\kappa_s][1\kappa_1]} \big)
\right)$, 
so every factor occurring in $\Delta_b$ is the logarithmic
differential of a cross-ratio. Hence each $\Delta_b$
is intrinsically defined on $\boldsymbol{\mathcal M}_{0,A}$.
Therefore \eqref{Eq:Sum-Delta-b} may be checked in any convenient
projective coordinate on $\mathbf P^1$. We choose a coordinate for which
$
p_1=\infty$, 
and write
\[
v_1=(1,0),
\qquad \mbox{ and } \quad 
v_j=(x_j,1),
\quad
j=2,\ldots,N
\]
for the coordinates of the lifts in $V$ of the points of the element of $\boldsymbol{\mathcal M}_{0,A}$ under consideration. For $i,j\geq2$, we thus have 
$ [1j]=1$ and 
$
[ij]=x_i-x_j
$ hence 
%We have 
%%%%%%%%%%%%%%%%%%%%%%%%%%%%%%%%%%%%%%%%%%%
\begin{equation}%%%%%%%%%%%%%%%%%%%%%%%%%%%%%%%%%%
%%%%%%%%%%%%%%%%%%%%%%%%%%%%%%%%%%%%%%%%%%%
\label{Eq:Db}
\Delta_b
=
\bigwedge_{s=2}^{d+1}
\dd\Log
\left(
\frac{x_b-x_{\kappa_1}}
     {x_b-x_{\kappa_s}}
\right)\, .
%%%%%%%%%%%%%%%%%%%%%%%%%%%%%%%%%%%%%%%%%%%
\end{equation}%%%%%%%%%%%%%%%%%%%%%%%%%%%%%%%%%%%
%%%%%%%%%%%%%%%%%%%%%%%%%%%%%%%%%%%%%%%%%%

In the affine coordinates $x_2,\ldots,x_N$, we consider the differential form $\Omega
=
\dd x_2\wedge\cdots\wedge\dd x_N$, the two vector fields $
T
=
\sum_{j=2}^{N}
\partial_{x_j}
%\frac{\partial}{\partial x_j}
$, 
$E
=
\sum_{j=2}^{N}
x_j\,\partial_{x_j} $, and we set 
\[
\Theta
=
\iota_E\iota_T\Omega.
\]

We set
$
K=\mathbf C(x_2,\ldots,x_N)
$
and denote by $W$ the subspace of  
$\Omega^1_K$ consisting of rational  $1$-forms  annihilated by both $T$ and $E$.
It is a $K$-vector space of dimension $(N-1)-2=d$. Since every factor in the
wedge product \eqref{Eq:Db} annihilates both $T$ and $E$, we have
\[
\Delta_b\in\wedge^d W .
\]

We claim that $\Theta\in\wedge^d W$ as well. To see this, let
$
(T,E,X_1,\ldots,X_d)
$
be a $K$-basis of the space $\Xi_K^1$ of rational vector fields in the
$x_i$'s, and let $
(\tau,\eta,\chi_1,\ldots,\chi_d)$
be the dual basis of $\Omega_K^1$. Then there exists $c\in K^*$ such that
$
\Omega
=
c\,\tau\wedge\eta\wedge\chi_1\wedge\cdots\wedge\chi_d
$, 
and therefore
$
\Theta
=
c\,\big( \wedge_{i=1}^d \chi_i\big)$. 
%\cdots\wedge\chi_d$. 
Since the $\chi_i$'s form a $K$-basis of $W$, we obtain
\[
\Theta\in\wedge^d W .
\]

Since $\dim_K W=d$, the space $\wedge^d W$ is one-dimensional. Hence
there exists a rational function $\varrho_b\in K$ such that
%Since $\dim_K W=d$, we have that $\wedge^{d} W$ is of dimension 1, hence there %exists  a rational function $\varrho_b$ such that
%%%%%%%%%%%%%%%%%%%%%%%%%%%%%%%%%%%%%%%%%%%
\begin{equation}%%%%%%%%%%%%%%%%%%%%%%%%%%%%%%%%%%
%%%%%%%%%%%%%%%%%%%%%%%%%%%%%%%%%%%%%%%%%%%
\label{Delta=F Thetha}
\Delta_b=\varrho_b\,\Theta\, .
%%%%%%%%%%%%%%%%%%%%%%%%%%%%%%%%%%%%%%%%%%%
\end{equation}%%%%%%%%%%%%%%%%%%%%%%%%%%%%%%%%%%%
%%%%%%%%%%%%%%%%%%%%%%%%%%%%%%%%%%%%%%%%%%%

To determine $\varrho_b$, we evaluate both sides of the above equality on the $d$-tuple of vector fields 
$\xi=\big(
\frac{\partial}{\partial x_{\kappa_s}}
\big)_{s=2}^{d+1}$. 
From \eqref{Eq:Db}, we obtain that $\Delta_b(\xi)$ is the determinant of the matrix whose $(s,t)$-th coefficient is equal to 
the $\dd\Log
\left(
\frac{x_b-x_{\kappa_1}}
     {x_b-x_{\kappa_s}}
\right)$ evaluated on $\partial/\partial {x_{\kappa_t}} $ (with $s,t=2,\ldots,d+1$). Hence we have 
%%%%%%%%%%%%%%%%%%%%%%%%%%%%%%%%%%%%%%%%%%%
%$i_{ \partial/\partial {x_{\kappa_t}}} \dd\Log
%\left(
%\frac{x_b-x_{\kappa_1}}
%     {x_b-x_{\kappa_s}}
%\right)$
%%%%%%%%%%%%%%%%%%%%%%%%%%%%%%%%%%%%%%%%%%%
\begin{equation}%%%%%%%%%%%%%%%%%%%%%%%%%%%%%%%%%%
%%%%%%%%%%%%%%%%%%%%%%%%%%%%%%%%%%%%%%%%%%%
\label{Eq:Delta-b-xi}
\Delta_b
\big(\,
\xi
\, \big)= \frac1{\prod_{s=2}^{d+1}
({x_b-x_{\kappa_s}})} \, .
%%%%%%%%%%%%%%%%%%%%%%%%%%%%%%%%%%%%%%%%%%%
\end{equation}%%%%%%%%%%%%%%%%%%%%%%%%%%%%%%%%%%%
%%%%%%%%%%%%%%%%%%%%%%%%%%%%%%%%%%%%%%%%%%%
%\[\Delta_b
%\big(\,
%\xi
%\, \big)=
%\prod_{s=2}^{d+1}
%\frac{1}{x_b-x_{\kappa_s}}\, .
%\]

On the other hand, we have 
$$ \Theta(\xi)= \Omega\Big( T,E, \partial_{x_{\kappa_2}},\ldots,\partial_{x_{\kappa_{d+1}}}\Big)\,. $$
%\[
%\Theta
%\left(
%\frac{\partial}{\partial x_{c_2}},
%\ldots,
%\frac{\partial}{\partial x_{c_m}}
%\right)
%=
%\Omega
%\left(
%T,E,
%\frac{\partial}{\partial x_{c_2}},
%\ldots,
%\frac{\partial}{\partial x_{c_m}}
%\right).
%\]
Only the components of $T$ and $E$ along
$\partial/\partial x_b$ and
$\partial/\partial x_{\kappa_1}$ contribute. The ordered list
$
(b,\kappa_1,\kappa_2,\ldots,\kappa_{d+1})
$
has signature $(-1)^{b-2}$ with respect to the increasing list
$
(2,3,\ldots,N),
$
while
\[
\det
\begin{pmatrix}
1 & x_b\\
1 & x_{\kappa_1}
\end{pmatrix}
=
x_{\kappa_1}-x_b\, .
\]
It follows that 
$
\Theta
(\xi)
=
(-1)^{b-1}(x_b-x_{\kappa_1})
$ and 
combining this with \eqref{Eq:Delta-b-xi}, we get 
%\[
%\varrho_b
%=
%\frac{(-1)^{b-1}}
%{\displaystyle
%\prod_{s=1}^{d+1}(x_b-x_{\kappa_s})}.
%\]
\[
\varrho_b
=
{(-1)^{b-1}} 
{\displaystyle
\prod_{s=1}^{d+1}(x_b-x_{\kappa_s})^{-1}}.
\]
Since $d+1=N-2$ is even, we thus have
%%%%%%%%%%%%%%%%%%%%%%%%%%%%%%%%%%%%%%%%%%%
\begin{equation}%%%%%%%%%%%%%%%%%%%%%%%%%%%%%%%%%%
\label{Eq:Delta-b-Theta}%%%%%%%%%%%%%%%%%%%%%%%%%%%%%%
%%%%%%%%%%%%%%%%%%%%%%%%%%%%%%%%%%%%%%%%%%%
\Delta_b
=
{(-1)^{b-1} } \, \Theta\,
{\displaystyle
\prod_{\substack{j=2\\j\neq b}}^{N}
(x_j-x_b)^{-1}} 
%%%%%%%%%%%%%%%%%%%%%%%%%%%%%%%%%%%%%%%%%%%
\end{equation}%%%%%%%%%%%%%%%%%%%%%%%%%%%%%%%%%%%
%%%%%%%%%%%%%%%%%%%%%%%%%%%%%%%%%%%%%%%%%%%
%
%%%%%%%%%%%%%%%%%%%%%%%%%%%%%%%%%%%%%%%%%%%%
%\begin{equation}%%%%%%%%%%%%%%%%%%%%%%%%%%%%%%%%%%
%\label{Eq:Delta-b-Theta}%%%%%%%%%%%%%%%%%%%%%%%%%%%%%%
%%%%%%%%%%%%%%%%%%%%%%%%%%%%%%%%%%%%%%%%%%%
%\Delta_b
%=
%\frac{(-1)^{b-1} }
%{\displaystyle
%\prod_{\substack{j=2\\j\neq b}}^{N}
%(x_j-x_b)} \, \Theta\,
%%%%%%%%%%%%%%%%%%%%%%%%%%%%%%%%%%%%%%%%%%%
%\end{equation}%%%%%%%%%%%%%%%%%%%%%%%%%%%%%%%%%%%
%%%%%%%%%%%%%%%%%%%%%%%%%%%%%%%%%%%%%%%%%%%
hence identity \eqref{Eq:Sum-Delta-b} is equivalent to 
%%%%%%%%%%%%%%%%%%%%%%%%%%%%%%%%%%%%%%%%%%%
\begin{equation}%%%%%%%%%%%%%%%%%%%%%%%%%%%%%%%%%%%
\label{Eq:foin}
%%%%%%%%%%%%%%%%%%%%%%%%%%%%%%%%%%%%%%%%%%%
\sum_{b=2}^{N}
%\frac{1}
{
\displaystyle
\prod_{\substack{j=2\\j\neq b}}^{N}
(x_j-x_b)^{-1}}=0\, . 
%%%%%%%%%%%%%%%%%%%%%%%%%%%%%%%%%%%%%%%%%%%
\end{equation}%%%%%%%%%%%%%%%%%%%%%%%%%%%%%%%%%%%
%%%%%%%%%%%%%%%%%%%%%%%%%%%%%%%%%%%%%%%%%%%

Setting 
$Q(t)
=
\prod_{j=2}^{N}(t-x_j)$, because   $N$ is even, we have 
$$
Q'(x_b)
=
\prod_{\substack{j=2\\j\neq b}}^{N}
(x_b-x_j)
=
\prod_{\substack{j=2\\j\neq b}}^{N}
(x_j-x_b)\, .
$$   
Hence \eqref{Eq:foin} 
follows from the Lagrange interpolation identity
$
\sum_{b=2}^{N}
{1}/{Q'(x_b)}
=
0
$.

This concludes the proof that  the form $\omega_A$, a priori defined on $\widetilde U_A$,  is invariant under the action of $\big( \mathbf C^*\big)^{N}$ 
and hence descends to the quotient  $\widetilde U_A\rightarrow U_A$. 
\medskip

It remains to prove the invariance of $\omega_A$ with respect to the action of ${\bf GL}(V)$. 
Since each factor is the logarithmic
differential of a cross-ratio, the forms
$\beta_\sigma$ are ${\bf GL}(V)$-invariant.  
On the other hand, for a given $g\in {\bf GL}(V)$, we have 
${\rm Log}\, \big\lvert\, [gv_i,gv_j]\, \big \rvert
={\rm Log}\, \lvert\, \det \, g\, \rvert 
+
{\rm Log}\, \big\lvert\, [ij]\, \big \rvert$ for every $i,j$, hence the variation of $\omega_A$ under $g$ is
%\[
%{\rm Log}\, \big\lvert\, [gv_i,gv_j]\, \big \rvert
%={\rm Log}\, \lvert\, \det \, g\, \rvert 
%+
%{\rm Log}\, \big\lvert\, [ij]\, \big \rvert
%\, .
%\]
%Hence the variation of $\omega_A$ under $g$ is
$$\frac{{\rm Log}|\det g \,|}{N!} 
\sum_{\sigma\in{\mathfrak S}_N}
(-1)^\sigma\beta_\sigma\, .
$$ 
But identity \eqref{Eq:MRP}, applied with
$
c_1=\cdots=c_N=1$, 
gives $
2
\sum_{\sigma\in{\mathfrak S}_N}
(-1)^\sigma \beta_\sigma
= 0$ which shows that $\omega_A$ is indeed  invariant under ${\bf GL}(V)$. Together with its
$\big(\mathbf C^*\big)^N$-invariance established before, this ensures that $\omega_A$ descends to $\boldsymbol{\mathcal M}_{0,A}$ and finishes the proof of the first point of Proposition 
\ref{Prop:descent}. 
\mk

%%%%%%%%%%%%%%%%%%%%%%%%%%%%%%%%%%%%%%%%%%
\subsection{\bf Proof of the second point of Proposition \ref{Prop:descent}}
%%%%%%%%%%%%%%%%%%%%%%%%%%%%%%%%%%%%%%%%%%
This will follow from the explicit expression of $\omega_A$ in affine coordinates
obtained in Section~\ref{S:Link-explicit} below; see
Corollary~\ref{Cor:Second-Point}.

%here-here-here

%%%%%%%%%%%%%%%%%%%%%%%%%%%%%%%%%%%%%%%%%%%%
%%%%%%%%%%%%%%%%%%%%%%%%%%%%%%%%%%%%%%%%%%%%
\section{\bf Proof of Theorem \ref{Prop:Differential-Identity}}
\label{S:Proof-of-Theorem}
%%%%%%%%%%%%%%%%%%%%%%%%%%%%%%%%%%%%%%%%%%%%
%%%%%%%%%%%%%%%%%%%%%%%%%%%%%%%%%%%%%%%%%%%%
We shall give two proofs of the differential identity
\eqref{Eq:sum-(-1)a-omega-a=0}. The first is conceptual: the vanishing
is obtained by antisymmetrizing a form which is invariant under an odd
permutation. The second is more explicit and is formulated at the
symbolic level. After choosing a convenient affine normalization, the
terms $f_a^*(\omega_a)$ are represented by elements of a formal symbol
space, in which the cancellation becomes transparent: for each
$b\in\{1,\ldots,N\}$, the same block $W_b$ appears in the symbol of
$f_b^*(\omega_b)$ and in the symbol of
$f_{N+1}^*(\omega_{N+1})$, with opposite coefficients in the
alternating sum.

%%%%%%%%%%%%%%%%%%%%%%%%%%%%%%%%%%%%%%%%%%%%%
%%%%%%%%%%%%%%%%%%%%%%%%%%%%%%%%%%%%%%%%%%%%%
\subsection{\bf First proof.}
We keep the notation introduced above. Thus $N\geq 4$ is even,
$
A=\{1,\ldots,N+1\}$, 
and, for every $a\in A$,
$
A_a=A\setminus\{a\}
$ 
is endowed with the ordering induced by the natural ordering of $A$.
We denote by
$
f_a:\M_{0,A}\longrightarrow \M_{0,A_a}
$
the forgetful map and put
$
\omega_a=\omega_{A_a}$. 

By Proposition \ref{Prop:descent}, the form $\omega_a$, initially
defined on $\widetilde U_a=\widetilde U_{A_a}$, descends to a well-defined nonzero
differential $(N-3)$-form on $\M_{0,A_a}$. In the present proof, however, it will be more convenient to work
entirely on the spaces of lifts. For every $a\in A$, let
\[
F_a:\widetilde U_A\longrightarrow\widetilde U_{a}
\]
be the map which forgets the vector $v_a$. If
\[
q_A:\widetilde U_A\longrightarrow\M_{0,A}
\qquad\text{and}\qquad
q_a=q_{A_a}:\widetilde U_{a}\longrightarrow\M_{0,A_a}
\]
denote the quotient maps, we have the commutative diagram
\[
\xymatrix{
\widetilde U_A \ar[r]^{F_a} \ar[d]_{q_A}
&
\widetilde U_{a} \ar[d]^{q_{a}}
\\
\M_{0,A} \ar[r]_{f_a}
&
\M_{0,A_a}.
}
\]
%ssss
To avoid any ambiguity between a differential form on a moduli space
and its pull-back to the corresponding space of lifts, we set
\[
\widetilde\omega_a
=
q_{a}^*(\omega_a)
%\qquad 
%\mbox{ thus } \qquad  F_a^*\big(\widetilde\omega_a\big)
%=
%q_A^*\bigl(f_a^*(\omega_a)\bigr)
\, .
\]
%In what follows, by a slight abuse of notation, we shall denote again
%by $\omega_a$ the pull-back $q_{a}^*(\omega_a)$ to
%$\widetilde U_{a}$. 
Thus
%\begin{equation}
%\label{Eq:Fa-fa}
$F_a^*\big(\widetilde\omega_a\big)
=
q_A^*\bigl(f_a^*(\omega_a)\bigr)$ for every $a$, and 
%\end{equation}
 since $q_A$ is a surjective submersion, pull-back by $q_A$ is
injective on differential forms. Consequently, in order to prove
\eqref{Eq:sum-(-1)a-omega-a=0}, it is enough to establish
\begin{equation}
\label{Eq:differential-identity-upstairs}
\sum_{a=1}^{N+1}
(-1)^aF_a^*\big(\widetilde\omega_a\big) =0
\end{equation}
on $\widetilde U_A$. We shall prove this identity by a direct antisymmetrization argument. The proof will follow from a simple auxiliary expression for the
forms $\widetilde\omega_a$.

\medskip
%%%%%%%%%%%%%%%%%%%%%%%%%%%%%%%%%%%%%%%%%%%%%
\noindent
{\bf 1. An auxiliary form.}
%%%%%%%%%%%%%%%%%%%%%%%%%%%%%%%%%%%%%%%%%%%%%
Let
$B=\{b_1<\cdots<b_N\}$
be any ordered set of cardinality $N$. We denote by
$
q_B:\widetilde U_B\longrightarrow\M_{0,B}
$
the quotient map and set
$
\widetilde\omega_B=q_B^*(\omega_B)$. 
By the original definition of $\omega_B$, one has
$
\widetilde\omega_B=\Alt_B(\eta_B),
$
where
\[
\eta_B
=
\Log\big\lvert[b_1b_2]\big\rvert
\bigwedge_{r=3}^{N-1}
\dd\Log\left(
\frac{[b_1b_r][b_2b_N]}
     {[b_2b_r][b_1b_N]}
\right).
\]

We introduce the auxiliary form
\begin{equation}
\label{Eq:gamma-B}
\gamma_B
=
\Log\big\lvert[b_1b_2]\big\rvert
\bigwedge_{r=3}^{N-1}
\dd\Log\left(
\frac{[b_1b_r]}{[b_2b_r]}
\right).
\end{equation}
The important feature of $\gamma_B$ is that it does not involve the
last label $b_N$, unlike $\eta_B$. We claim that
\begin{equation}
\label{Eq:omega-gamma}
\widetilde\omega_B
=
(N-2)\Alt_B(\gamma_B)\, .
\end{equation}
%which  is equivalent to $\eta_B-(N-2)\gamma_B \in {\rm Ker}\big( \Alt_B\big)$.

To prove this, for $r=3,\ldots,N$, put
%%%%%%%%%%%%%%%%%%%%%%%%%%%%%%%%%%%%%%%%
$$
\alpha_r^B = \dd\Log\big(
[b_1b_r]/[b_2b_r]\big)\, .
%\qquad  \mbox{ for }\,r=3,\ldots,N\,.
$$ 
%%%%%%%%%%%%%%%%%%%%%%%%%%%%%%%%%%%%%%%%
%\[
%\alpha_r^B
%=
%\dd\Log\left(
%\frac{[b_1b_r]}{[b_2b_r]}
%\right)\qquad \mbox{ for }\qquad 
% 3\leq r\leq N.
%\]
Then
%%%%%%%%%%%%%%%%%%%%%%%%%%%%%%%%%%%%%%%%
\begin{equation*}
%%%%%%%%%%%%%%%%%%%%%%%%%%%%%%%%%%%%%%%%
\gamma_B
= 
\Log\big\lvert[b_1b_2]\big\rvert\,
\wedge_{r=3}^{N-1} \alpha_r^B
%\alpha_3^B\wedge\cdots\wedge\alpha_{N-1}^B
\quad 
\mbox{ whereas }\quad 
\eta_B
=
\Log\big\lvert[b_1b_2]\big\rvert
\wedge_{r=3}^{N-1}
\left(\alpha_r^B-\alpha_N^B\right)\, .
%%%%%%%%%%%%%%%%%%%%%%%%%%%%%%%%%%%%%%%%
%%%%%%%%%%%%%%%%%%%%%%%%%%%%%%%%%%%%%%%%
\end{equation*}
%%%%%%%%%%%%%%%%%%%%%%%%%%%%%%%%%%%%%%%%

Expanding the latter wedge product gives
\begin{equation}
\label{Eq:expansion-alpha}
\eta_B
= \Log\big\lvert[b_1b_2]\big\rvert\,
\sum_{r=3}^{N}
(-1)^{N-r}
\alpha_3^B\wedge\cdots\wedge
\widehat{\alpha_r^B}\wedge\cdots\wedge\alpha_N^B.
\end{equation}

For $r=3,\ldots,N$, consider the cycle $
\tau_r=(b_r\,b_{r+1}\,\cdots\,b_N)\in{\mathfrak S}_B$. 
Its signature is $
(-1)^{\tau_r}=(-1)^{N-r}$, it 
 fixes $b_1$ and $b_2$, 
and we have $ \tau_r^*(\alpha_s^B)=\alpha_s^B$ for $s<r$ whereas 
 $ \tau_r^*(\alpha_t^B)=\alpha_{t+1}^B$ for $t=r,\ldots,N-1$. It follows that 
\[
\tau_r^*(\gamma_B)
=
\Log\big\lvert[b_1b_2]\big\rvert\,
\alpha_3^B\wedge\cdots\wedge
\widehat{\alpha_r^B}\wedge\cdots\wedge\alpha_N^B.
\]

It follows from \eqref{Eq:expansion-alpha} that 
\[
\eta_B-(N-2)\gamma_B
=
\sum_{r=3}^{N}
\left(
(-1)^{N-r}\tau_r^*(\gamma_B)-\gamma_B
\right).
\]
Since  $
\Alt_B(\nu^*\alpha)=(-1)^\nu\Alt_B(\alpha)$ 
for every $\nu\in\mathfrak S_B$, and since
$(-1)^{\tau_r}=(-1)^{N-r}$, each summand on the right-hand side
belongs to $\ker(\Alt_B)$. Therefore
\[
\Alt_B\Big(\eta_B-(N-2)\gamma_B\Big)=0,
\]
which is equivalent to \eqref{Eq:omega-gamma}. 
%\[
%\widetilde\omega_B
%=
%(N-2)\Alt_B(\gamma_B).
%\]
\medskip

%%%%%%%%%%%%%%%%%%%%%%%%%%%%%%%%%%%%%%%%%%%%%
\noindent
{\bf 2. Proof of the differential identity on $\boldsymbol{\widetilde U_A}$.}
%%%%%%%%%%%%%%%%%%%%%%%%%%%%%%%%%%%%%%%%%%%%%
We now return to $
A=\{1,\ldots,N+1\}$. Consider the form
%%%%%%%%%%%%%%%%%%%%%%%%%%%%%%%%%%%%%%%%%%%%%
\begin{equation}%%%%%%%%%%%%%%%%%%%%%%%%%%%%%%%%%%%%
\label{Eq:Gamma-A}%%%%%%%%%%%%%%%%%%%%%%%%%%%%%%%%%%
%%%%%%%%%%%%%%%%%%%%%%%%%%%%%%%%%%%%%%%%%%%%%
\Gamma
=
F_{N+1}^*(\gamma_{A_{N+1}})
=
\Log\big\lvert[12]\big\rvert
\bigwedge_{r=3}^{N-1}
\dd\Log\left(
\frac{[1r]}{[2r]}
\right)
%%%%%%%%%%%%%%%%%%%%%%%%%%%%%%%%%%%%%%%%%%%%%
\end{equation}%%%%%%%%%%%%%%%%%%%%%%%%%%%%%%%%%%%%%
%%%%%%%%%%%%%%%%%%%%%%%%%%%%%%%%%%%%%%%%%%%%%
on $\widetilde U_A$. The crucial point is that $\Gamma$ involves neither the label $N$
nor the label $N+1$. Hence, if  
$\tau\in{\mathfrak S}_A$ is 
the transposition exchanging $N$ and $N+1$,
we have
$
\tau^*\Gamma=\Gamma$.
 On the other hand,  we have 
$ \Alt_A(\tau^*\Gamma)
=
(-1)^\tau\Alt_A(\Gamma)
=
-\Alt_A(\Gamma)$. 
Therefore
\begin{equation}
\label{Eq:Alt-Gamma-zero}
\Alt_A(\Gamma)=0.
\end{equation}

We now decompose this vanishing according to the last value of the
permutations occurring in the antisymmetrization. By definition,
\begin{equation}
\label{Eq:Alt-Gamma}
\Alt_A(\Gamma)
=
\frac1{(N+1)!}
\sum_{\sigma\in{\mathfrak S}_A}
(-1)^\sigma\,\sigma^*\Gamma.
\end{equation}

Fix $a\in A$ and write  $
A_a=\{b_1<\cdots<b_N\}$.
 Let $\chi_a\in{\mathfrak S}_A$ be the permutation 
 moving $a$ to the last position while preserving the
relative order of the other labels: we have 
\[
\big(\chi_a(i)\big)_{i=1}^{N+1}
%=
%(b_1,\ldots,b_N,a)
=
\big(\, 1,\ldots,a-1, a+1,\ldots,N+1,a\, \big).
\]
 Hence  one has $(-1)^{\chi_a}=(-1)^{N+1-a}$. 
 %\begin{equation}
%\label{Eq:sign-chi-a}
%\sgn(\chi_a)
%=
%(-1)^{N+1-a}.
%\end{equation}
 Every permutation $\sigma\in{\mathfrak S}_A$ satisfying
$
\sigma(N+1)=a
$ 
is obtained uniquely by choosing a permutation
$\rho\in{\mathfrak S}_{N}\simeq {\mathfrak S}_{A_a}$ of the first $N$ entries. With respect to
the natural ordering of $A_a$, its one-line notation is
\[
\big(\sigma(i)\big)_{i=1}^{N+1} = 
\big(\, b_{\rho(1)}\, , \, \ldots\, , \, b_{\rho(N)}\, , \, a\, \big),
\]
and therefore $(-1)^\sigma =
(-1)^{N+1-a}(-1)^\rho$. 
%\begin{equation}
%\label{Eq:sign-sigma-rho}
%\sgn(\sigma)
%=
%(-1)^{N+1-a}\sgn(\rho).
%\end{equation}
Moreover, directly from the definitions of $\Gamma$ and
$\gamma_{A_a}$, we get 
\begin{equation}
\label{Eq:sigma-Gamma-Fa}
\sigma^*\Gamma
=
F_a^*\bigl(\rho^*\gamma_{A_a}\bigr).
\end{equation}

It follows that the contribution to \eqref{Eq:Alt-Gamma} of all
$\sigma\in{\mathfrak S}_{N+1}$ satisfying $\sigma(N+1)=a$ is
\[
\begin{aligned}
\frac1{(N+1)!}
\sum_{\substack{\sigma\in{\mathfrak S}_A\\
\sigma(N+1)=a}}
(-1)^\sigma\,\sigma^*\Gamma 
= & \, 
\frac{(-1)^{N+1-a}}{(N+1)!}
\sum_{\rho\in{\mathfrak S}_{A_a}}
(-1)^\rho\,
F_a^*(\rho^*\gamma_{A_a})
\\
= & \, 
\frac{(-1)^{N+1-a}}{N+1}
F_a^*\Big(\Alt_{A_a}(\gamma_{A_a})\Big).
\end{aligned}
\]
Summing over $a$, we obtain
\begin{equation}
\label{Eq:Alt-Gamma-faces}
\Alt_A(\Gamma)
=
\frac1{N+1}
\sum_{a=1}^{N+1}
(-1)^{N+1-a}
F_a^*\Big(\Alt_{A_a}\big(\gamma_{A_a}\big)\Big).
\end{equation}

%Since $N$ is even, $
%(-1)^{N+1-a}=(-1)^{a+1}$. 
Combining \eqref{Eq:Alt-Gamma-zero} and
\eqref{Eq:Alt-Gamma-faces} gives
\[
\sum_{a=1}^{N+1}
(-1)^a
F_a^*\Big(\Alt_{A_a}(\gamma_{A_a})\Big)
=
0.
\]
By \eqref{Eq:omega-gamma},
$
\widetilde\omega_a
=
(N-2)\Alt_{A_a}(\gamma_{A_a})$, 
and hence we obtain the desired  identity: 
\begin{equation}
\label{Eq:differential-identity-upstairs-final}
\sum_{a=1}^{N+1}
(-1)^aF_a^*\big(\widetilde\omega_a\big)
=
0\, .
\end{equation}
%which implies the wanted identity
%\begin{equation}
%\label{Eq:differential-identity-upstairs-final}
%\sum_{a=1}^{N+1}
%(-1)^af_a^*\big(\omega_a\big)
%=
%0\, . 
%\end{equation}

This proves the first assertion of Theorem 
\ref{Prop:Differential-Identity}. Since every $\omega_a$ is nonzero
by Proposition \ref{Prop:descent}, and each $f_a$ is a submersion,
the forms $f_a^*(\omega_a)$ are nonzero. 
Since we have $\sum_{a=1}^{N+1}
(-1)^af_a^*\big(\omega_a\big)
=
0$, it follows that the $A$-tuple 
\[
\boldsymbol{E}_{N-2}
=
\Bigl(
(-1)^af_a^*(\omega_a)
\Bigr)_{a\in A}\,
\]
can be seen as a global complete abelian relation for $\boldsymbol{\mathcal W}_{0,A}$.
\mk 

%%%%%%%%%%%%%%%%%%%%%%%%%%%%%%%%%%%%%%%%%%%%%
\noindent
{\bf 3. Equivariance under $\boldsymbol{\boldsymbol{\mathfrak S}_A}$.}
%%%%%%%%%%%%%%%%%%%%%%%%%%%%%%%%%%%%%%%%%%%%%
 Let $\sigma\in{\mathfrak S}_A$ and $a\in A$. Restriction of $\sigma$
gives a bijection between the ordered sets
$
\overline\sigma_a:
A_a\longrightarrow A_{\sigma(a)}$,
$
i\longmapsto\sigma(i)$ whose signature 
is
\begin{equation}
\label{Eq:signature-restriction}
(-1)^{\overline\sigma_a}
=
(-1)^{a+\sigma(a)}(-1)^\sigma\, .
\end{equation}
Indeed, consider the permutation matrix associated with $\sigma$.
Deleting the row indexed by $a$ and the column indexed by
$\sigma(a)$ gives the permutation matrix associated with
$\overline\sigma_a$. Expanding the determinant along the row indexed
by $a$ gives
\[
(-1)^\sigma
=
(-1)^{a+\sigma(a)}
(-1)^{\overline\sigma_a},
\]
which is equivalent to \eqref{Eq:signature-restriction}.
 The bijection $\overline\sigma_a$ induces a relabelling map
$
R_{\sigma,a}:
\widetilde U_{A_{\sigma(a)}}
\longrightarrow
\widetilde U_{A_a}$ and 
the forgetful maps are compatible with these relabellings: 
the following diagram 
\[
\xymatrix{
\widetilde U_A
\ar[r]^{\sigma}
\ar[d]_{F_{\sigma(a)}}
&
\widetilde U_A
\ar[d]^{F_a}
\\
\widetilde U_{{\sigma(a)}}
\ar[r]_{R_{\sigma,a}}
&
\widetilde U_{a}
}
\]
is commutative. 
Consequently,
\begin{equation}
\label{Eq:permutation-forgetful}
\sigma^*\circ F_a^*
=
F_{\sigma(a)}^*\circ R_{\sigma,a}^*.
\end{equation}

By the very definition of $\widetilde\omega_B$ as an
antisymmetrization, relabelling gives
%%%%%%%%%%%%%%%%%%%%%%%%%%%%%%%
\begin{equation}%%%%%%%%%%%%%%%%%%%%%%
\label{Eq:omega-relabel}%%%%%%%%%%%%%%%%%%
%%%%%%%%%%%%%%%%%%%%%%%%%%%%%%%
R_{\sigma,a}^*\big(\widetilde\omega_a\big)
=
(-1)^{\overline\sigma_a}\,
\widetilde\omega_{\sigma(a)}.
%%%%%%%%%%%%%%%%%%%%%%%%%%%%%%%
\end{equation}%%%%%%%%%%%%%%%%%%%%%%%
%%%%%%%%%%%%%%%%%%%%%%%%%%%%%%%
Combining \eqref{Eq:permutation-forgetful},
\eqref{Eq:omega-relabel}, and
\eqref{Eq:signature-restriction}, we obtain
%%%%%%%%%%%%%%%%%%%%%%%%%%%%%%%
\[
\begin{aligned}
\sigma^*
\Big(F_a^*(\widetilde\omega_a)\Big)
&=
F_{\sigma(a)}^*
\Big(
R_{\sigma,a}^*(\widetilde\omega_a)
\Big)
\\
&=
(-1)^{\overline\sigma_a}\,
F_{\sigma(a)}^*
\big(\,\widetilde\omega_{\sigma(a)}\,\big)
\\
&=
(-1)^{a+\sigma(a)}
(-1)^\sigma\,
F_{\sigma(a)}^*
\big(\,\widetilde\omega_{\sigma(a)}\,\big).
\end{aligned}
\]
%%%%%%%%%%%%%%%%%%%%%%%%%%%%%%%
Since all the forms involved descend, the corresponding identity on
$\M_{0,A}$ is 
$\sigma^*
\Big(f_a^*(\omega_a)\Big)
=
(-1)^{a+\sigma(a)}
(-1)^\sigma\,
f_{\sigma(a)}^*\big(\omega_{\sigma(a)}\big)$. 
%\begin{equation*}
%\label{Eq:component-equivariance}
%\sigma^*
%\Big(f_a^*(\omega_a)\Big)
%=
%(-1)^{a+\sigma(a)}
%(-1)^\sigma\,
%f_{\sigma(a)}^*\big(\omega_{\sigma(a)}\big)\, .
%\end{equation*}
 Multiplying by $(-1)^a$ gives
\[
\begin{aligned}
\sigma^*\Bigl(
(-1)^af_a^*(\omega_a)
\Bigr)
%&=
%(-1)^a
%(-1)^{a+\sigma(a)}
%(-1)^\sigma\,
%f_{\sigma(a)}^*(\omega_{\sigma(a)})
%\\
&=
(-1)^\sigma\,
(-1)^{\sigma(a)}
f_{\sigma(a)}^*\big(\omega_{\sigma(a)}\big).
\end{aligned}
\]
Replacing $a$ by $\sigma^{-1}(a)$, this reads
$\big(\sigma\cdot\boldsymbol{E}_{N-2}\big)_a=\sigma^*\big((\boldsymbol{E}_{N-2})_{\sigma^{-1}(a)}\big)
=(-1)^\sigma(\boldsymbol{E}_{N-2})_a$ for every $a\in A$, that is
$
\sigma\cdot\boldsymbol{E}_{N-2}
=
(-1)^\sigma \boldsymbol{E}_{N-2}$. 
This proves the second assertion and completes the proof of
Theorem  \ref{Prop:Differential-Identity}.

%\newpage
%.
%\newpage
%%%%%%%%%%%%%%%%%%%%%%%%%%%%%%%%%%%%%%%%%%%
%%%%%%%%%%%%%%%%%%%%%%%%%%%%%%%%%%%%%%%%%%%
\subsection{\bf A second proof: the symbolic approach}
%%%%%%%%%%%%%%%%%%%%%%%%%%%%%%%%%%%%%%%%%%%
%%%%%%%%%%%%%%%%%%%%%%%%%%%%%%%%%%%%%%%%%%%
\label{SS:Second-proof}
We give a second, more explicit proof of the differential identity
\eqref{Eq:sum-(-1)a-omega-a=0}. Whereas the first proof explains the
vanishing by means of a global antisymmetrization argument, the proof
below makes the cancellation explicit after choosing a convenient
affine normalization. The logarithmic coefficients and their
logarithmic differentials will be encoded by formal generators, and
the identity will first be established in the resulting symbol space.

We keep all the notation introduced in the first proof. In particular,
\[
A=\{1,\ldots,N+1\},
\qquad
A_a=A\setminus\{a\},
\]
and, for every ordered set $B$ of cardinality $N$, the form
$\gamma_B$ is defined by \eqref{Eq:gamma-B}. We shall use throughout
the identity
\begin{equation}
\label{Eq:omega-gamma-recalled}
\widetilde\omega_B
=
(N-2)\Alt_B(\gamma_B),
\end{equation}
proved in \eqref{Eq:omega-gamma}.

\begin{proof}

%%%%%%%%%%%%%%%%%%%%%%%%%%%%%%%%%%%%%%%%%%%
\medskip
\noindent
\textbf{Step 1. Affine normalization and the symbol space.}
%%%%%%%%%%%%%%%%%%%%%%%%%%%%%%%%%%%%%%%%%%%
%{\bf NEW}
We use the standard affine coordinates on $\M_{0,A}$ obtained by
normalizing
\[
p_{N-1}=0,
\qquad
p_N=1,
\qquad
p_{N+1}=\infty.
\]
We write
$
p_i=z_i$,
for $1\leq i\leq N-2$, 
and, for notational uniformity, we put
$z_{N-1}=0$ and $
z_N=1$. Thus we get a standard identification
\[
\M_{0,A}
\simeq
\left\{\, 
\big(z_i\big)_{i=1}^{N-2}\in\C^{N-2}
\,  \big\lvert \, \, 
z_i\neq0,1,\quad z_i\neq z_j\, 
\right\}.
\]

We lift this normalized projective configuration by choosing 
\[
v_i=(z_i,1)
\quad  \mbox{for } 1\leq i\leq N
\qquad\mbox{ and }\qquad
v_{N+1}=(1,0).
\]
Hence  the associated determinants satisfy $[ij]=z_i-z_j$ for  $1\leq i,j\leq N$, whereas $[N+1,i]=-[i,N+1]=1$ for every $i\leq N$. 
In particular, we have 
$\Log |\, [i,N+1]\, |=0$  and $\dd\Log\big({[i,N+1]}/({[j,N+1]})\big)=0$. 
%\[
%\Log \big|\, [i,N+1]\, \big|=0
%\qquad \mbox{ and } 
%\qquad 
%\dd\Log\left(\frac{[i,N+1]}{[j,N+1]}\right)=0.
%\]
\sk

We define
$$
\mathfrak E=
\Big\{ \, 
\{i,j\}\subset\{1,\ldots,N\}
\, \, \big\lvert  \,  \, 
i<j,\, 
\{i,j\}\neq\{N-1,N\}\, 
\Big\}
$$
and 
for $e=\{i,j\}\in\mathfrak E$, we set 
$$
\theta_e
=
\dd\Log(z_i-z_j)\,,$$ 
keeping in mind that  replacing $z_i-z_j$ by its negative
does not change its logarithmic differential  (since $
\dd\Log(-f)=\dd(-f)/(-f)=\dd f/f=\dd\Log(f)$ for any $f$). \sk

Given $e=\{i,j\}\in\mathfrak E$, let $D_e$ be the boundary divisor of the 
Deligne--Mumford compactification $\overline{\M}_{0,A}$ corresponding to the partition 
$ \{i,j\}\sqcup A \setminus\{i,j\}$ of $A$. 
%\[
%\{i,j\}\mid A\setminus\{i,j\}.
%\]
At the generic point of $D_e$, the function $z_i-z_j$ is a local
equation for $D_e$. Consequently, for any $e,e'\in \mathfrak E$, 
$\Res_{D_e}(\theta_{e'})=1$ if $e=e'$ and is zero otherwise. 
%%%%%%%%%%%%%%%%%%%%%%%%%%%%%%%%%%%
%\begin{equation}
%\label{Eq:residue-theta}
%%%%%%%%%%%%%%%%%%%%%%%%%%%%%%%%%%%
%\Res_{D_e}(\theta_{e'})
%=
%\begin{cases}
%\, 1& \mbox{if } e=e',\\
%\, 0& \mbox{otherwise}.
%\end{cases}
%%%%%%%%%%%%%%%%%%%%%%%%%%%%%%%%%%%
%\end{equation}%%%%%%%%%%%%%%%%%%%%%%%%%%%
%%%%%%%%%%%%%%%%%%%%%%%%%%%%%%%%%%%
One deduces from this that the rational 1-forms $\theta_e$, $e\in\mathfrak E$, are linearly
independent over $\mathbf C$. 
\sk 

We introduce the vector space
\[
\boldsymbol{\mathcal H}
=
\bigoplus_{e\in\mathfrak E}\C\,\varepsilon_e
\]
and identify the span of the $\theta_e$'s with $\boldsymbol{\mathcal H}$ by means of the
selected residue coordinates
\begin{equation}
\label{Eq:selected-residue-map}
\theta_e\longmapsto\varepsilon_e.
\end{equation}
All logarithmic differentials occurring below belong to this span.
\sk 

We shall also keep track of the logarithmic coefficients. For this purpose, we introduce a
second formal vector space
\[
\boldsymbol{\mathcal L}
=
\bigoplus_{e\in\mathfrak E}\C\,{\ell}_e,
\]
where $\ell_{\{i,j\}}$ records the function
$L_{ij}
=
\Log|z_i-z_j|$. 
Since $z_{N-1}-z_N=-1$, 
we adopt the conventions
$\ell_{\{N-1,N\}}=0$
and 
$\varepsilon_{\{N-1,N\}}=0$, in $\boldsymbol{\mathcal L}$ and $\boldsymbol{\mathcal H}$ respectively. Note that these two spaces are isomorphic, via the linear map $\boldsymbol{\mathcal L}\rightarrow \boldsymbol{\mathcal H}$, ${\ell}_e\mapsto \varepsilon_e$. 

Put
\begin{equation}
\label{Eq:decorated-residue-space}
\boldsymbol{\mathcal S}
=
\boldsymbol{\mathcal L}\otimes\wedge^{N-3}\boldsymbol{\mathcal H}\, .
\end{equation}

For $k\geq 0$, we denote by $\Omega^{k}(\M_{0,A})$ the space of holomorphic differential $k$-forms on 
$\M_{0,A}$, and by  $\Omega_{\mathrm{an}}^{k}(\M_{0,A})$ the space of $k$-forms with real-analytic coefficients,  that is, forms that can be written as linear combinations of forms of the type
$\varphi\,\eta$ with $\eta\in \Omega^{k}(\M_{0,A})$ and $\varphi\in C^\omega(\M_{0,A},\mathbf C)$. 
There is a natural linear realisation map
\begin{equation}
\label{Eq:evaluation-decorated}
%\rho:
\boldsymbol{\mathfrak s} : 
\boldsymbol{\mathcal S}
\longrightarrow
\Omega_{\mathrm{an}}^{N-3}(\M_{0,A})
\end{equation}
induced by the two linear maps  
$$\boldsymbol{\mathcal L} \rightarrow C^\omega(\M_{0,A},\mathbf C)
\qquad \mbox{ and } \qquad 
\boldsymbol{\mathcal H}  \rightarrow \Omega^1(\M_{0,A})$$ 
defined respectively by
$\ell_{\{i,j\}}
\longmapsto
\Log \big|\, z_i-z_j\, \big|
$
and  $
\varepsilon_{\{i,j\}}
\longmapsto
\dd\Log \big(z_i-z_j\big)$. 
%\[
%\ell_{\{i,j\}}
%\longmapsto
%\Log \big|\, z_i-z_j\, \big|
%\qquad
%\mbox{and } \qquad 
%\varepsilon_{\{i,j\}}
%\longmapsto
%\dd\Log \big(z_i-z_j\big)\, .
%\]
   We shall prove the differential identity already in the formal space $\boldsymbol{\mathcal S}$. Applying $\boldsymbol{\mathfrak s}$ will then give the desired
identity of differential forms.
\sk

An element
$\hat\alpha\in\boldsymbol{\mathcal S}$ will be called a \emph{symbol} of a
differential form $\alpha$ if
\[
\boldsymbol{\mathfrak s}\big(\hat\alpha\big)=\alpha.
\]
No uniqueness of such a symbol, and in particular no injectivity of
$\boldsymbol{\mathfrak s}$, will be needed below. It will be enough to construct specific
symbols for the forms $f_a^*(\omega_a)$ and to prove that their
alternating sum vanishes already in $\boldsymbol{\mathcal S}$.
\medskip

%%%%%%%%%%%%%%%%%%%%%%%%%%%%%%%%%%%%%%%%%%%
\noindent
\textbf{Step 2. The symbol associated with an ordering.}
%%%%%%%%%%%%%%%%%%%%%%%%%%%%%%%%%%%%%%%%%%%
For $b\in\{1,\ldots,N\}$, set
$
B_b
=
\{1,\ldots,N\}\setminus\{b\}$ and let 
$\mu=(i_1,\ldots,i_{N-1})
$
be an ordering of $B_b$. We associate with $\mu$ the element
\begin{equation}
\label{Eq:Gamma-hat-mu}
\widehat\Gamma_\mu
=
\ell_{\{i_1,i_2\}}
\otimes
\bigwedge_{r=3}^{N-1}
\left(
\varepsilon_{\{i_1,i_r\}}
-
\varepsilon_{\{i_2,i_r\}}
\right)
\in\boldsymbol{\mathcal S}\, .
\end{equation}
By construction, we have 
$
\boldsymbol{\mathfrak s}(\widehat\Gamma_\mu)
=
\Gamma_\mu,
$ with 
\begin{equation}
\label{Eq:Gamma-mu}
\Gamma_\mu
=
{\rm Log}\, \big|\, z_{i_1}-z_{i_2}\,\big|
\bigwedge_{r=3}^{N-1}
\dd\Log\left(
\frac{z_{i_1}-z_{i_r}}
     {z_{i_2}-z_{i_r}}
\right)
\in \Omega_{\rm an}^{N-3}\big(\M_{0,A}\big)\, .
\end{equation}

Let $(-1)^\mu$ be the signature of the permutation of the naturally ordered set $B_b$
whose one-line notation is
$
(i_1,\ldots,i_{N-1})$. 
Finally, set
\begin{equation}
\label{Eq:W-b}
W_b
=
\sum_{\mu\in\operatorname{Ord}(B_b)}
(-1)^\mu\,
\widehat\Gamma_\mu
\in\boldsymbol{\mathcal S}\,
\end{equation}
where the sum is over the set $\operatorname{Ord}(B_b)$ of all orderings of 
$B_b$. (Note that we have a bijection $\operatorname{Ord}(B_b)\simeq 
{\mathfrak S}_{B_b}$). 
 Thus $W_b$ is the alternating symbolic block associated with the
ordered set $B_b$. We shall see that exactly the same block occurs in
the symbols of $f_b^*(\omega_b)$ and
$f_{N+1}^*(\omega_{N+1})$. 
\medskip
\medskip
\noindent
\textbf{Step 3. The symbols of
$\boldsymbol{f_b^*(\omega_b)}$, $\boldsymbol{1\leq b\leq N}$.}
%%%%%%%%%%%%%%%%%%%%%%%%%%%%%%%%%%%%%%%%%%%
Fix $b\in\{1,\ldots,N\}$. Recall that
$
A_b=A\setminus\{b\}
$
is endowed with the ordering induced by the natural ordering of $A$.
Since $b\leq N$, the label $N+1$ belongs to $A_b$.

Write $
A_b=\{c_1<\cdots<c_N\}$. 
For $\sigma\in\mathfrak S_{A_b}$, we shall 
write its  one-line notation as 
\[
(\sigma_1,\ldots,\sigma_N)
=
\bigl(\sigma(c_1),\ldots,\sigma(c_N)\bigr).
\]
By \eqref{Eq:omega-gamma-recalled}, we have
$\widetilde\omega_b
=
({N-2}/{N!})
\sum_{\sigma\in\mathfrak S_{A_b}}
(-1)^\sigma\,
\sigma^*(\gamma_{A_b})$.
%\[
%\widetilde\omega_b
%=
%\frac{N-2}{N!}
%\sum_{\sigma\in\mathfrak S_{A_b}}
%(-1)^\sigma\,
%\sigma^*(\gamma_{A_b}).
%\]
After pulling back by $F_b$, this gives
\begin{equation}
\label{Eq:omega-b-expanded-symbolic}
F_b^*\big(\,\widetilde\omega_b\,\big)
=
\frac{N-2}{N!}
\sum_{\sigma\in\mathfrak S_{A_b}}
(-1)^\sigma\,
F_b^*\Big(\sigma^*\big(\gamma_{A_b}\big)\Big)\, .
\end{equation}

Let us now write explicitly the terms occurring in this sum.
By the definition \eqref{Eq:gamma-B} of $\gamma_B$, we have
\begin{equation}
\label{Eq:sigma-gamma-Ab-explicit}
F_b^*\Big(\sigma^*(\gamma_{A_b})\Big)
=
\Log\,\big|\,[\sigma_1\sigma_2]\,\big|
\bigwedge_{r=3}^{N-1}
\dd\Log\left(
\frac{[\sigma_1\sigma_r]}
     {[\sigma_2\sigma_r]}
\right),
\end{equation}
where the brackets on the right-hand side are now viewed as functions
on $\widetilde U_A$.\sk 

We restrict \eqref{Eq:sigma-gamma-Ab-explicit} to the affine section
chosen in Step~1, namely 
$v_i=(z_i,1)$ for $1\leq i\leq N$,  and
$
v_{N+1}=(1,0)$.
There are now three possibilities, according to the position occupied
by the label $N+1$ in the one-line notation
$(\sigma_1,\ldots,\sigma_N)$.
\sk

\begin{itemize} 
\item
If $N+1\in\{\sigma_1,\sigma_2\}$, then the logarithmic coefficient in
\eqref{Eq:sigma-gamma-Ab-explicit} is
$
\Log\bigl|[N+1,j]\bigr|
=
\Log 1
=
0$ for some $j\in\{1,\ldots,N\}$, hence $F_b^*\big(\sigma^*(\gamma_{A_b})\big)=0$.
\sk
%%%%%%%%%%
\item  If $\sigma_r=N+1$ for some
$r\in\{3,\ldots,N-1\}$,  both $[\sigma_1,N+1]$ and $[\sigma_2,N+1]$ are equal to $-1$ hence 
$ 
\dd\Log\big(
{[\sigma_1,N+1]} / 
     {[\sigma_2,N+1]}
\big)
=
0$.  Thus we have $
F_b^*\bigl(\sigma^*(\gamma_{A_b})\bigr)=0
$ again.\sk
%%%%%%%%%%%%%%%%%%%%%%%%%%%%%%
%\item 
\end{itemize}
\smallskip

It remains to consider the case when $\sigma_N=N+1$.  In this case
$$\mu = (i_1,\ldots,i_{N-1})=
(\sigma_1,\ldots,\sigma_{N-1})
$$ 
is an ordering of $B_b=\{1,\ldots,N\}\setminus\{b\}= A\setminus \{ \,b\, , \, N+1\,\}$, all the labels $\sigma_1,\ldots,\sigma_{N-1}$ belong to
$\{1,\ldots,N\}$, and in the affine coordinates we are working with, 
\eqref{Eq:sigma-gamma-Ab-explicit}  is written 
\begin{align}
F_b^*\bigl(\sigma^*(\gamma_{A_b})\bigr)
&=
\Log \big|\, z_{i_1}-z_{i_2}\, \big|
\bigwedge_{r=3}^{N-1}
\dd\Log\left(
\frac{z_{i_1}-z_{i_r}}
     {z_{i_2}-z_{i_r}}
\right)
%\nonumber
 = 
\Gamma_\mu\, .
\label{Eq:sigma-gamma-Ab-Gamma-mu}
\end{align}
Conversely, every ordering
$\mu=(i_1,\ldots,i_{N-1})$ of $B_b$ arises in this way from the unique
permutation of $A_b$ whose one-line notation is
$
\big(\sigma_1,\ldots,\sigma_{N-1},N+1\big)$.
 Since $N+1$ is the largest element of the naturally ordered set
$A_b$, appending it in the last position does not create any
additional inversion. Therefore the signature of this permutation is
exactly $(-1)^\mu$.
\sk

It follows from \eqref{Eq:omega-b-expanded-symbolic} that, on the
section corresponding to the affine coordinates $z_i$, we have 
$
F_b^*\big(\,\widetilde\omega_b\,\big)
= ({N-2}/{N!}) 
\sum_{\mu\in\operatorname{Ord}(B_b)}
(-1)^\mu\,\Gamma_\mu
$. 
%\begin{equation}
%\label{Eq:omega-b-affine-Gamma}
%F_b^*\big(\,\widetilde\omega_b\,\big)
%=
%\frac{N-2}{N!}
%\sum_{\mu\in\operatorname{Ord}(B_b)}
%(-1)^\mu\,\Gamma_\mu.
%\end{equation}
 Replacing each $\Gamma_\mu$ by its symbol
$\widehat\Gamma_\mu$, defined in \eqref{Eq:Gamma-hat-mu}, we obtain
the following symbol of $f_b^*(\omega_b)$:
%%%%%%%%%%%%%%%%%%%%%%%%%%%%%%%%%%%%%%%%%%%%%
\begin{equation}%%%%%%%%%%%%%%%%%%%%%%%%%%%%%%%%%%%%
\label{Eq:symbol-omega-b}%%%%%%%%%%%%%%%%%%%%%%%%%%%%%%
%%%%%%%%%%%%%%%%%%%%%%%%%%%%%%%%%%%%%%%%%%%%%
\widehat\Omega_b
=
\frac{N-2}{N!} \hspace{-0.2cm} 
\sum_{\mu\in\operatorname{Ord}(B_b)}
(-1)^\mu\,\widehat\Gamma_\mu
=
\frac{N-2}{N!}\,W_b \, .
\end{equation}
Equivalently, for $b=1,\ldots,N$, we have 
\begin{equation}
\label{Eq:realization-omega-b}
\boldsymbol{\mathfrak s}\big(\,\widehat\Omega_b\, \big)
=
f_b^*(\omega_b)\, .
%%%%%%%%%%%%%%%%%%%%%%%%%%%%%%%%%%%%%%%%%%%%%
\end{equation}%%%%%%%%%%%%%%%%%%%%%%%%%%%%%%%%%%%%%
%%%%%%%%%%%%%%%%%%%%%%%%%%%%%%%%%%%%%%%%%%%%%

%%%%%%%%%%%%%%%%%%%%%%%%%%%%%%%%%%%%%%%%%%%
\medskip
\noindent
\textbf{Step 4. The symbol of
$\boldsymbol{f_{N+1}^*(\omega_{N+1})}$.}
%%%%%%%%%%%%%%%%%%%%%%%%%%%%%%%%%%%%%%%%%%%
We now determine a  symbol of the last term
$
f_{N+1}^*(\omega_{N+1})
$
of the differential identity.  Recall that
$
A_{N+1}=\{1,\ldots,N\}
$.  
\sk

By \eqref{Eq:omega-gamma-recalled}, we have
$
\widetilde\omega_{N+1}
=
({N-2}/{N!})
\sum_{\sigma\in{\mathfrak S}_N}
(-1)^\sigma\,
\sigma^*\big(\gamma_{A_{N+1}}\big)
$.
%\[
%\widetilde\omega_{N+1}
%=
%\frac{N-2}{N!}
%\sum_{\sigma\in{\mathfrak S}_N}
%(-1)^\sigma\,
%\sigma^*\big(\gamma_{A_{N+1}}\big)\, .
%\]
 We group the terms in this antisymmetrization according to the last
value of $\sigma$, that we denote by 
$
b=b_\sigma=\sigma(N)\in\{1,\ldots,N\}$. 
Once $b$ is fixed, the first $N-1$ values of $\sigma$ form an ordering 
$\mu=(i_1,\ldots,i_{N-1})$
of 
$B_b=\{1,\ldots,N\}\setminus\{b\}$.

We now compare the signature of $\sigma$ with the signature
$(-1)^\mu$ introduced in Step~2. Let $\chi_b\in{\mathfrak S}_N $ such that 
\[
\big(\chi_b(i)\big)_{i=1}^{N}
=
(1,\ldots,b-1,b+1,\ldots,N,b)\, . 
\]
 The permutation $\chi_b$ moves $b$ to the last
position while preserving the relative order of the other labels.
Since this requires $N-b$ adjacent transpositions, we have
\begin{equation}
\label{Eq:sign-chi-b-second-proof}
(-1)^{\chi_b}
=
(-1)^{N-b}.
\end{equation}
Consequently, the permutation of $\{1,\ldots,N\}$ whose one-line notation is
\[
(i_1,\ldots,i_{N-1},b)
\]
has signature $
(-1)^{N-b}(-1)^\mu$.  The definition \eqref{Eq:gamma-B} has the important property that
$\gamma_B$ does not involve the last label of the ordered set $B$.
Hence, for the permutation $\sigma$ such that 
\[
\bigl(\sigma(1),\ldots,\sigma(N)\bigr)
=(i_1,\ldots,i_{N-1},b),
%\quad  \textcolor{red}{\bf ???}
\]
the form $\sigma^*(\gamma_{A_{N+1}})$ depends only on the ordered
$(N-1)$-tuple
\[
\mu=(i_1,\ldots,i_{N-1}),
\]
and, after restriction to the affine section fixed in Step~1, it is
precisely the form $\Gamma_\mu$ defined in
\eqref{Eq:Gamma-mu}.
 It follows that an element of $\boldsymbol{\mathcal S}$
whose realisation is 
$f_{N+1}^*(\omega_{N+1})$ is  given by 
%%%%%%%%%%%%%%%%%%%%%%%%%%%%%%%%%%%%%%%%
\begin{align}%%%%%%%%%%%%%%%%%%%%%%%%%%%%%%%%%
%%%%%%%%%%%%%%%%%%%%%%%%%%%%%%%%%%%%%%%%
\label{Eq:symbol-omega-N+1}
%%%%%%%%%%%%%%%%%%%%%%%%%%%%%%%%%%%%%%%%
\widehat\Omega_{N+1}
%&
=
\frac{N-2}{N!}
\sum_{b=1}^{N}
\sum_{\mu\in\operatorname{Ord}(B_b)}
(-1)^{N-b}(-1)^\mu\,
\widehat\Gamma_\mu
%\nonumber\\
%&
=
\frac{N-2}{N!}
\sum_{b=1}^{N}
(-1)^{N-b}W_b.
%%%%%%%%%%%%%%%%%%%%%%%%%%%%%%%%%%%%%%%%
\end{align}
%%%%%%%%%%%%%%%%%%%%%%%%%%%%%%%%%%%%%%%%
Equivalently,  we have 
%%%%%%%%%%%%%%%%%%%%%%%%%%%%%%%%%%%%%%%%
\begin{equation}%%%%%%%%%%%%%%%%%%%%%%%%%%%%%%%
\label{Eq:realization-omega-N+1}
%%%%%%%%%%%%%%%%%%%%%%%%%%%%%%%%%%%%%%%%
\boldsymbol{\mathfrak s}\big (\widehat\Omega_{N+1} \big)
=
f_{N+1}^*(\omega_{N+1}).
%%%%%%%%%%%%%%%%%%%%%%%%%%%%%%%%%%%%%%%%
\end{equation}%%%%%%%%%%%%%%%%%%%%%%%%%%%%%%%%
%%%%%%%%%%%%%%%%%%%%%%%%%%%%%%%%%%%%%%%%

%%%%%%%%%%%%%%%%%%%%%%%%%%%%%%%%%%%%%%%%%%%
\medskip
\noindent
\textbf{Step 5. Conclusion.}
%%%%%%%%%%%%%%%%%%%%%%%%%%%%%%%%%%%%%%%%%%%
 Combining \eqref{Eq:symbol-omega-b} and
\eqref{Eq:symbol-omega-N+1}, we obtain
\[
\begin{aligned}
\sum_{a=1}^{N+1}(-1)^a\widehat\Omega_a
&=
\frac{N-2}{N!}
\sum_{b=1}^{N}
\left(
(-1)^b
+
(-1)^{N+1}(-1)^{N-b}
\right)W_b
\\
&=
\frac{N-2}{N!}
\sum_{b=1}^{N}
\left(
(-1)^b+(-1)^{b+1}
\right)W_b =0.
\end{aligned}
\]

Applying the linear realisation map
$
\boldsymbol{\mathfrak s}:
\boldsymbol{\mathcal S}
\rightarrow
\Omega_{\mathrm{an}}^{N-3}(\M_{0,A})
$
and using \eqref{Eq:realization-omega-b} and
\eqref{Eq:realization-omega-N+1}, we conclude that
$
\sum_{a=1}^{N+1}
(-1)^a f_a^*(\omega_a)=0
$
on $\M_{0,A}$. This gives a second proof of the first assertion of Theorem 
\ref{Prop:Differential-Identity}.

\end{proof}

%%%%%%%%%%%%%%%%%%%%%%%%%%%%%%%%%%%%%%%%%%%%%%%%%%%%%%%%%%%%%%%%%%%%%%
\section{Proof of Theorem \ref{Coro:main}}
\label{S:Proof-Coro-Main}
%%%%%%%%%%%%%%%%%%%%%%%%%%%%%%%%%%%%%%%%%%%%%%%%%%%%%%%%%%%%%%%%%%%%%%
%Throughout this section, $n\geq2$ is even,  and
%$A=\{1,\ldots,n+3\}$. We write $\boldsymbol{\mathcal E}_n$ for the geometrically
%defined Euler relation of \cite[\S5]{M06}, to distinguish it from
%the relation $\boldsymbol E_n$ constructed here.

Throughout this section, $n\geq2$ is even and
$A=\{1,\ldots,n+3\}$, with its natural ordering. Set
\[
 X=\M_{0,A}^{\mathbb R},
 \qquad A_a=A\setminus\{a\},
 \qquad X_a=\M_{0,A_a}^{\mathbb R},
\]
and let $f_a:X\longrightarrow X_a$ be the corresponding forgetful map.
All differential forms and vector spaces in this section are real.
We denote the Euler relation of \cite[\S5]{M06} by
$\boldsymbol{\mathcal E}_n$, to distinguish it from the relation
$\boldsymbol E_n$ constructed in the present paper.
\mk 

\noindent
\textbf{Chambers and forgetful maps.}
The chambers of $X$, viewed as a configuration space modulo
$\mathrm{PGL}_2(\mathbf R)$, are indexed by cyclic orders on $A$
modulo reversal; see \cite[\S2.1.5, (2.9)--(2.10)]{M06}.
For a chamber $D\subset X$, let $D_a\subset X_a$ be the chamber
obtained by deleting $a$ from its cyclic order; this is
well-defined since deletion commutes with reversal.
Then $f_a(D)=D_a$: the inclusion is immediate, and any
configuration in $D_a$ lifts to $D$ by inserting the missing
point in the prescribed open interval between its two neighbours.

Choosing three labels in $A_a$ and normalizing their points to
$0,1,\infty$, as in \cite[\S2.1.1, (2.1)]{M06}, identifies
$f_a$ with a coordinate projection between open subsets of
real affine spaces.  
Hence $f_a|_D:D\to D_a$ is a surjective real-analytic
submersion, and its pullback on differential forms is injective.
\medskip

%%%%%%%%%%%%%%%%%%%%%%%%%%%%%%%%%%%%%%%%%%%%%%%%%%
\subsection{\bf Some spaces of abelian relations.}
%%%%%%%%%%%%%%%%%%%%%%%%%%%%%%%%%%%%%%%%%%%%%%%%%%
Let 
\begin{itemize}
\item[$-$] $\boldsymbol{AR}_C$ be the real vector space of global rational
combinatorial $(n-1)$-abelian relations of
$\boldsymbol{\mathcal W}_{0,A}^{\mathbb R}$ (cf. \cite[\S2.1.4]{M06}); 
\sk
%%%%%%%%%%%%%%%%%%%%
\item[$-$]
$\boldsymbol{AR}(D)$ be the space of real-analytic $(n-1)$-abelian
relations on any chamber $D\subset\M_{0,A}^{\mathbb R}$;
%%%%%%%%%%%%%%%%%%%%
\item[$-$] 
$\boldsymbol{AR}_{\mathrm{glob}}$ be 
the subspace of the direct sum of the $\boldsymbol{AR}(D)$'s for all chambers $D$, which is 
the space of tuples
$(\eta_a)_{a\in A}$, with $\eta_a$ a real-analytic $(n-1)$-form
on the whole of $X_a$, such that $\sum_a f_a^*(\eta_a)=0$ globally on $X$: \[
 \boldsymbol{AR}_{\mathrm{glob}}
 =\left\{
 (\eta_a)_{a\in A}\in
 \bigoplus_{a\in A}\Omega^{n-1}_{\mathrm{an}}(X_a)
 \ \middle|\ 
 \sum_{a\in A}f_a^*(\eta_a)=0\text{ on }X
 \right\}.
\]
In particular, each component is a single form on the whole target
$X_a$. Its restrictions to two source chambers with the same image
under $f_a$ must therefore agree on that common target chamber.
This compatibility is part of the definition; it would not follow
from choosing an unrelated abelian relation on each source chamber.
\end{itemize}

We identify $\boldsymbol{AR}_C$ with its restriction to any such chamber 
and thus regard it as a linear subspace of $\boldsymbol{AR}(D)$ for any $D$.  
These identifications give rise to a natural linear embedding 
$ \boldsymbol{AR}_C \hookrightarrow \boldsymbol{AR}_{\mathrm{glob}}$. Finally, by restriction, we have a linear map   $\boldsymbol{AR}_{\mathrm{glob}}
\rightarrow  \boldsymbol{AR}(D)$ for any chamber $D$, such that the composition 
$\boldsymbol{AR}_C\hookrightarrow \boldsymbol{AR}_{\mathrm{glob}} \rightarrow 
\boldsymbol{AR}(D)$ is the identification just mentioned. 
\medskip 
%\bk 
%\bk

The symmetric group $\mathfrak S_A$ acts on both 
$ \boldsymbol{AR}_C $ and $ \boldsymbol{AR}_{\mathrm{glob}}$, in a way compatible with the embedding of the former into the latter. 
 With the convention for permutations fixed in
the Introduction, its expression on tuples of pulled-back forms is
\begin{equation}
\label{Eq:section-four-relabelling}
 (\sigma\cdot R)_a
 =\sigma^*R_{\sigma^{-1}(a)}
 \end{equation}
for every $a\in A$ and every $ \sigma\in\mathfrak S_A$. 
By Theorem~\ref{Prop:Differential-Identity}, we have 
\begin{equation}
\label{Eq:section-four-E-sign}
 \boldsymbol E_n\in\boldsymbol{AR}_{\mathrm{glob}}
 \qquad\mbox{ and } \qquad 
 \sigma\cdot\boldsymbol E_n=(-1)^\sigma\boldsymbol E_n 
 \quad \forall  \sigma\in\mathfrak S_A\, .
\end{equation}

We shall use two facts about the combinatorial relations.
First, $\boldsymbol{AR}_C$ is an irreducible
$\mathfrak S_A$-module of dimension $n(n+3)/2>1$
\cite[Theorem~4.2.1]{M06}. Hence
\begin{equation}
\label{Eq:section-four-no-combinatorial-sign}
 \Big\langle\, R\in\boldsymbol{AR}_C\mid
 \sigma\cdot R=(-1)^\sigma R\text{ for every }\sigma \Big\rangle
 =\{0\}.
\end{equation}
Indeed, a nonzero such vector would span a one-dimensional invariant
subspace, contradicting irreducibility and the stated dimension.

Secondly, for distinct $i,j\in A$, set
\[
 \boldsymbol A_{\widehat{\imath\jmath}}
 = \big\langle\,R\in\boldsymbol{AR}_C\mid R_i=R_j=0\,\big\rangle.
\]
This is the one-dimensional space of relations of the quadrilateral
$(n+1)$-subweb obtained by omitting the $i$th and $j$th foliations;
see \cite[\S2.1.4 and \S4.1]{M06}. 

If $\tau=(i\,j)$, then
\begin{equation}
\label{Eq:section-four-line-action}
 \tau\cdot Q=(-1)^{n-1}Q=-Q
 %\qquad
 %\text{for every }Q\in\boldsymbol A_{\widehat{\imath\jmath}},
\end{equation}
for every  $Q\in\boldsymbol A_{\widehat{\imath\jmath}}$ 
by \cite[Lemma~4.3.2]{M06}. This is precisely where the parity
assumption will enter the transposition argument.

%%%%%%%%%%%%%%%%%%%%%%%%%%%%%%%%%%%%%%%%%%%%%%%%%%
\subsection{\bf Non-rationality of $\boldsymbol{E}_n$ and reduction to a combinatorial remainder.}
%%%%%%%%%%%%%%%%%%%%%%%%%%%%%%%%%%%%%%%%%%%%%%%%%%

By Corollary~\ref{Cor:Second-Point}, each $\omega_a$
is non-rational on every nonempty open subset of $X_a$.
For any chamber $D\subset X$, we have
$f_a(D)=D_a$, and $f_a|_D:D\to D_a$ has injective
pullback on differential forms. Since the components
of $\boldsymbol{AR}_C$ are pullbacks of rational forms
on the corresponding $X_a$, it follows that
\[
\boldsymbol E_n|_D\notin\boldsymbol{AR}_C.
\]

The rank bound and the dimension of the combinatorial space
\cite[Proposition~1.1.1 and Theorem~4.2.1]{M06} give
\[
 \dim\boldsymbol{AR} (D)\leq\frac{(n+2)(n+1)}2
 \qquad \mbox{ and } \qquad 
 \dim \boldsymbol{AR}_C=\frac{n(n+3)}2\, .
\]
Together with Theorem~\ref{Prop:Differential-Identity}, this yields
\begin{equation}
\label{Eq:Euler-affine-splitting}
\boldsymbol{AR} (D)=\boldsymbol{AR}_C \oplus\R\,\boldsymbol E_n|_D\, .
\end{equation}

%%%%%%%%%%%%%%%%%%%%%%%%%%%%%%%%%%%%%%%%%%%%%%%%
\subsection{Uniqueness of a global relation with the sign character}
%%%%%%%%%%%%%%%%%%%%%%%%%%%%%%%%%%%%%%%%%%%%%%%%
\begin{prop}%%%%%%%%%%%%%%%%%%%%%%%%%%%%%%%%%%%%%%%%%
\label{P:global-sign-uniqueness}%%%%%%%%%%%%%%%%%%%%%%%%%%%%%%
%%%%%%%%%%%%%%%%%%%%%%%%%%%%%%%%%%%%%%%%%%%%%%%%
The subspace of $\boldsymbol{AR}_{\mathrm{glob}}$ transforming
according to the sign character is exactly
$\R\boldsymbol E_n$. Equivalently, if
$\Psi\in\boldsymbol{AR}_{\mathrm{glob}}$ satisfies
\[
 \sigma\cdot\Psi=(-1)^\sigma\Psi
 \qquad\text{for every }\sigma\in\mathfrak S_A,
\]
there is a single constant $c\in\R$ such that
$\Psi=c\boldsymbol E_n$ on the whole of $X$.
\end{prop}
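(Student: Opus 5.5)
Let $\Psi\in\boldsymbol{AR}_{\mathrm{glob}}$ transform by the sign character. We first work chamber by chamber. For each chamber $D$, the splitting \eqref{Eq:Euler-affine-splitting} gives a unique decomposition
\[
\Psi|_D=R_D+c_D\,\boldsymbol E_n|_D,\qquad R_D\in\boldsymbol{AR}_C,\ c_D\in\R .
\]
The aim is to prove two things: that all the constants $c_D$ are equal to a single $c$, and that the combinatorial remainders $R_D$ all vanish. Then $\Psi=c\boldsymbol E_n$ on $X$. The converse inclusion $\R\boldsymbol E_n\subset$ (sign part) is already \eqref{Eq:section-four-E-sign}.

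\textbf{Step 1: the constants $c_D$ are all equal.} Take two chambers $D,D'$ that are adjacent in the sense that $f_a(D)=f_a(D')$ for some $a$. For instance, they may differ by moving the point $a$ across one neighbour. The $a$-component $\Psi_a$ is a single form on $X_a$, so $\Psi_a|_{D_a}$ is the same whether we see it from $D$ or from $D'$. Hence $(R_D)_a-(R_{D'})_a=(c_{D'}-c_D)\,\omega_a$ (up to the sign $(-1)^a$) on $D_a$. The left-hand side is rational. By Corollary~\ref{Cor:Second-Point}, $\omega_a$ is non-rational on every open set, so $c_D=c_{D'}$. Any two cyclic orders are connected by a chain of such single-point moves, so all the $c_D$ coincide; call the common value $c$. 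Then $\Psi':=\Psi-c\boldsymbol E_n$ lies in $\boldsymbol{AR}_{\mathrm{glob}}$, transforms by the sign character, and restricts to a combinatorial relation $R_D$ on each chamber.

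\textbf{Step 2: the remainders vanish.} Here we must be careful. $\mathfrak S_A$ permutes the chambers, so sign-equivariance of $\Psi'$ does not immediately give sign-equivariance of an individual $R_D$. We cannot apply \eqref{Eq:section-four-no-combinatorial-sign} directly unless the $R_D$ agree across chambers.

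First we prove that $R_D$ is independent of $D$, again via adjacency. Let $D,D'$ be adjacent through $f_a$. The difference $Q=R_D-R_{D'}\in\boldsymbol{AR}_C$ has $Q_a=0$, since both $a$-components equal $\Psi'_a$ on $D_a$. More is true. Suppose $D$ and $D'$ differ by swapping the adjacent points $i,j$ in the cyclic order. Then $f_i(D)=f_i(D')$ and $f_j(D)=f_j(D')$, so $Q_i=Q_j=0$, that is, $Q\in\boldsymbol A_{\widehat{\imath\jmath}}$. Now bring in the transposition $\tau=(i\,j)$. It maps $D$ to $D'$. Applying $\tau\cdot\Psi'=-\Psi'$ and comparing restrictions gives $\tau\cdot R_{D}=-R_{D'}$, using that the identification of $\boldsymbol{AR}_C$ across chambers is $\mathfrak S_A$-compatible. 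Substituting $R_{D'}=R_D-Q$ yields $\tau\cdot R_D+R_D=Q$. Applying $\tau$ once more and using \eqref{Eq:section-four-line-action}, namely $\tau\cdot Q=-Q$, gives $R_D+\tau\cdot R_D=-Q$. Hence $Q=-Q$, so $Q=0$. This is where the parity of $n$ is used.

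Adjacent transpositions of cyclic orders connect all chambers, so $R_D=R$ is constant. Then $\tau\cdot R=-R$ for all these transpositions, and they generate $\mathfrak S_A$. Therefore $R$ is sign-isotypic, and \eqref{Eq:section-four-no-combinatorial-sign} forces $R=0$.

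The main obstacle is Step 2. It requires checking precisely that swapping neighbours $i,j$ gives $\tau(D)=D'$ with the claimed compatibility of the restriction maps and of the $\mathfrak S_A$-action on $\boldsymbol{AR}_C$ under the chamber identifications. One must also verify that swapping neighbours suffices to connect all chambers modulo reversal. Step 1 in fact follows from the same adjacency argument.
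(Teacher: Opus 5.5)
Your proposal is correct and follows essentially the paper's argument: the chamberwise splitting, an adjacent transposition $\tau=(i\,j)$ with $D'=\tau(D)$, the common targets $f_i(D)=f_i(D')$ and $f_j(D)=f_j(D')$ forcing $R_D-R_{D'}\in\boldsymbol A_{\widehat{\imath\jmath}}$, the conflict between $\tau$-invariance of that difference and $\tau\cdot Q=-Q$, connectivity of the chambers, and finally irreducibility of $\boldsymbol{AR}_C$. There are two minor, equally valid variations: you obtain $c_D=c_{D'}$ from the non-rationality of $\omega_a$ (Corollary~\ref{Cor:Second-Point}) rather than from uniqueness of the decomposition on $D'$ after applying $\tau$, and you derive sign-isotypy of $R$ from $\tau\cdot R=-R$ for all transpositions rather than from the global identity $\Psi=c\boldsymbol E_n+R$.
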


\begin{proof}
For every chamber $D$, the decomposition
\eqref{Eq:Euler-affine-splitting} gives unique elements
$c_D\in\R$ and $R_D\in\boldsymbol{AR}_C$ such that
\begin{equation}
\label{Eq:section-four-fixed-chamber}
 \left.\Psi\right|_D
 =c_D\,\left.\boldsymbol E_n\right|_D
 +\left.R_D\right|_D.
\end{equation}
Each $R_D$ is globally defined, but both $c_D$ and the choice of
$R_D$ may depend on $D$. We first prove their independence.

Let $D$ be any chamber, let $i,j$ be consecutive labels in a cyclic
order representing it, and put $\tau=(i\,j)$ and $D'=\tau(D)$.
Since $\tau^2=1$, applying the relabelling action to
\eqref{Eq:section-four-fixed-chamber} gives an equality on $D'$.
Using $\tau\cdot\Psi=-\Psi$ and
$\tau\cdot\boldsymbol E_n=-\boldsymbol E_n$, we obtain
\[
 \Psi|_{D'}=c_D\boldsymbol E_n|_{D'}
             +(-\tau\cdot R_D)|_{D'}.
\]
The relation $-\tau\cdot R_D$ belongs to $\boldsymbol{AR}_C$.
Comparing this equality with the unique decomposition on $D'$ gives
\begin{equation}
\label{Eq:section-four-transposed-remainder}
 c_{D'}=c_D,
 \qquad R_{D'}=-\tau\cdot R_D.
\end{equation}
The second equality is an equality of global rational relations,
because restriction of $\boldsymbol{AR}_C$ to $D'$ is injective.
It does not yet assert that $R_{D'}=R_D$.

Deleting either member of the adjacent pair leaves the same cyclic
order for $D$ and $D'$. The surjectivity onto target chambers proved
above therefore gives
\begin{equation}
\label{Eq:section-four-common-targets}
 f_i(D)=f_i(D'),\qquad f_j(D)=f_j(D').
\end{equation}
We compare target forms on these common open sets. Write
\[
 \Psi_a=f_a^*\psi_a,\qquad
 (\boldsymbol E_n)_a=f_a^*e_a,\qquad
 (R_D)_a=f_a^*\rho_{D,a},
\]
where $e_a=(-1)^a\omega_{A_a}$, the forms $\psi_a,e_a$ are
real analytic on the whole of $X_a$, and $\rho_{D,a}$ is rational.
The decompositions on $D$ and $D'$, together with $c_{D'}=c_D$
and injectivity of pullback, give
\[
 \rho_{D,i}=\psi_i-c_De_i=\rho_{D',i}
 %\quad\text{on }f_i(D)=f_i(D'),
\]
on $f_i(D)=f_i(D')$ and the same statement holds with $j$ in place of $i$.
Since the differences of these rational forms vanish on nonempty
real open sets, they vanish identically. Thus
\begin{equation}
\label{Eq:section-four-difference-line}
 \Delta_{D,\tau}:=R_{D'}-R_D=-\tau\cdot R_D-R_D
 \ \in\ \boldsymbol A_{\widehat{\imath\jmath}}.
\end{equation}

On the other hand, the involutivity of $\tau$ gives
\[
 \tau\cdot\Delta_{D,\tau}
 =-\tau^2\cdot R_D-\tau\cdot R_D
 =-R_D-\tau\cdot R_D
 =\Delta_{D,\tau}.
\]
Since $\tau\cdot\Delta_{D,\tau}=-\Delta_{D,\tau}$ by \eqref{Eq:section-four-line-action}, it follows that $\Delta_{D,\tau}=0$, and therefore
%%%%%%%%%%%%%%%%%%%%%%%%%%%%%%%%%%%%%%%%%%%%%%%
\begin{equation}
\label{Eq:section-four-adjacent-sign}
 R_{D'}=R_D\, .
% \qquad \tau\cdot R_D=-R_D.
\end{equation}
Thus both $c_D$ and $R_D$ are unchanged when two consecutive
labels in the cyclic order are exchanged.

These exchanges connect all chambers. Indeed, choose linear
representatives of the cyclic orders of two chambers. One is obtained
from the other by successive exchanges of neighbouring entries;
each induces an allowed exchange of cyclic orders, up to reversal.
Thus there are a single constant $c\in\R$
and a single rational relation $R\in\boldsymbol{AR}_C$ such that,  for every chamber $D$, one has 
\[
 c_D=c,\qquad R_D=R\, .
 % \qquad\text{for every chamber }D.
\]
Equation~\eqref{Eq:section-four-fixed-chamber} now holds with
these same $c$ and $R$ on every chamber, hence on the whole of $X$.
Surjectivity of every $f_a:X\longrightarrow X_a$ then gives
the following equality in $\boldsymbol{AR}_{\mathrm{glob}}$: 
\begin{equation}
\label{Eq:section-four-global-remainder}
 \Psi=c\boldsymbol E_n+R\, .
% \qquad\text{in }\boldsymbol{AR}_{\mathrm{glob}}.
\end{equation}

It remains to show that this common remainder is zero.
Applying any $\sigma\in\mathfrak S_A$ to
\eqref{Eq:section-four-global-remainder} and using the sign
equivariance of $\Psi$ and $\boldsymbol E_n$ gives
\[
 \sigma\cdot R
 =\sigma\cdot\Psi-c\,\sigma\cdot\boldsymbol E_n
 =(-1)^\sigma(\Psi-c\boldsymbol E_n)
 =(-1)^\sigma R.
\]
Since $R\in\boldsymbol{AR}_C$, equation
\eqref{Eq:section-four-no-combinatorial-sign} implies $R=0$.
Consequently $\Psi=c\boldsymbol E_n$ globally.
The reverse inclusion in the proposition follows from
\eqref{Eq:section-four-E-sign}.
\end{proof}

\subsection{Application to the geometric Euler relation}

For an ordered set $B$ of cardinality $n+2$, let $G_B^\circ$ be
the open subset of $G_2(\R^B)$ on which all Pl\"ucker coordinates
are nonzero. The diagonal torus
\[
 T_B^+=(\R_{>0})^B/_{\R_{>0}}\,,\quad
 K_B=\{\pm1\}^B/_{\{\pm1\}}
 \quad \mbox{ and } \quad  T_B=T_B^+\times K_B=(\R^*)^B/_{\R^*},\qquad
\]
acts on $G_B^\circ$, and
$G_B^\circ/T_B=X_B=\M_{0,B}^{\mathbb R}$.
Let $P_{1,B}$ be the $O(\R^B)$-invariant representative of the
first Pontryagin class of the tautological rank-$2$ bundle, as in
\cite[\S5.7.1]{M06}, and put $\alpha_B=P_{1,B}^{\,n/2}$.
We orient $T_B^+$ by the orientation of its Lie algebra
$\R^B/\R(1,\ldots,1)$ induced by the order on $B$. On the oriented Grassmannian
$G_2^{or}(\R^B)$, the pull-back of $P_{1,B}$ is $E_B\wedge E_B$, where $E_B$ is the
$\mathrm{ SO}(\R^B)$-invariant representative of the Euler class of the tautological bundle;
thus $\alpha_B$ pulls back to $E_B^{\,n}$, and the results of \cite[\S5.3]{M06}, stated for
$E^n$ on the positive part, apply to $\alpha_B$ (see \cite[\S5.7.1]{M06}).
\medskip

\vspace{-0.4cm} 
The following lemma is essentially contained in \cite[Lemma~5.7.1 and Proposition~5.7.2]{M06},
where Euler's relation is considered chamber by chamber. What is new here is that the
construction yields forms defined on the whole of $X_B$, with a normalization which is coherent
across all chambers; this is what allows us to regard Euler's relation as an element of
$\boldsymbol{AR}_{\mathrm{glob}}$ below.

%%%%%%%%%%%%%%%%%%%%%%%%%%%%%%%%%%%%%%%%%%%%%
\begin{lem}\label{L:Euler-global-descent}%%%%%%%%%%%%%%%%%%%%%%%
%%%%%%%%%%%%%%%%%%%%%%%%%%%%%%%%%%%%%%%%%%%%%
Fibre integration of $\alpha_B$ along the oriented $T_B^+$-orbits
induces a well-defined real-analytic $(n-1)$-form $\theta_B$
on the entire real moduli space $X_B$. This form is independent
of all choices of nonzero lifts of the marked projective points.
Moreover, under an order-changing relabelling of $B$, it
transforms by the signature of that relabelling.
%%%%%%%%%%%%%%%%%%%%%%%%%%%%%%%%%%%%%%%%%%%%%
\end{lem}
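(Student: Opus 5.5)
We will construct $\theta_B$ directly on the real Grassmannian and only afterwards pass to the quotient. The starting point is the quotient presentation recalled above: $G_B^\circ/T_B=X_B$, with $T_B=T_B^+\times K_B$ and $T_B^+\simeq \R_{>0}^{B}/\R_{>0}$ of dimension $n+1$. The group $T_B^+$ acts freely on $G_B^\circ$, since all Plücker coordinates are nonzero. Its orbits are the fibres of the real-analytic submersion $G_B^\circ\to G_B^\circ/T_B^+$, and they are diffeomorphic to $\R^{n+1}$. The form $\alpha_B=P_{1,B}^{n/2}$ has degree $2n$. Integrating along fibres of dimension $n+1$ therefore produces a form of degree $n-1$, as required.

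\textbf{Step 1: convergence and real analyticity.} This step is chamber by chamber, and we will take it from \cite[Lemma~5.7.1 and Proposition~5.7.2]{M06}. There the fibre integral of $E^n$ over a $T^+$-orbit is shown to converge on the positive part and to define a real-analytic form. Convergence is a local estimate along a single orbit: in torus coordinates $t\in\R^{n+1}$ the integrand decays exponentially as $|t|\to\infty$. Real analyticity follows by differentiating under the integral sign, with local uniformity in the base point. The same estimate applies verbatim on every sign-chamber, because $K_B\subset O(\R^B)$ maps the positive part onto the other chambers and preserves the $O(\R^B)$-invariant form $P_{1,B}$. This is the reason for using $P_{1,B}$, which is $O$-invariant, rather than $E_B$, which is only $SO$-invariant: $E_B$ would change sign under $K_B$.

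\textbf{Step 2: descent to $X_B$ and independence of lifts.} We will get a form on $G_B^\circ/T_B^+$ that is $T_B^+$-basic, because $\alpha_B$ is $T_B^+$-invariant, being invariant under the diagonal subgroup of $O$, and we integrate along full orbits. To descend further to $X_B$ we need $K_B$-invariance of the fibre integral. Each $k\in K_B$ commutes with $T_B^+$ and maps orbits to orbits, preserving their orientation: the action on the Lie algebra is trivial, since $T_B^+$ is abelian and $k$ commutes with it. Also $k^*\alpha_B=\alpha_B$. Hence the fibre integral is $K_B$-invariant.

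Independence of the choice of nonzero lifts is exactly this statement. Two lifts of the marked points differ by an element of $(\R^*)^B$. Its $\R_{>0}$-part moves us along the orbit, where the fibre integral is unchanged, and its sign part is an element of $K_B$. Overall scalars are quotiented out.

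We expect the main obstacle to lie here, and it is an orientation subtlety. We must check that the orientation of $T_B^+$, which is defined from the order of $B$ via $\R^B/\R(1,\dots,1)$, is genuinely $K_B$-invariant. It is, because $K_B$ acts trivially on the Lie algebra. We must also check that the orbit map $t\mapsto t\cdot x$ has no hidden sign in chambers where a $K_B$-element is needed to reach the positive part.

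\textbf{Step 3: relabelling.} Let $\pi:B\to B'$ be an order-changing bijection. It induces a permutation matrix $P_\pi\in O(\R^B)$, so $P_\pi^*\alpha_B=\alpha_{B'}$. It also intertwines the two torus actions, but it transports the orientation of $\R^B/\R(1,\dots,1)$ by $\det$ of the induced map on that quotient, which is $(-1)^\pi$. The fibre-integration formula then gives $\pi^*\theta_B=(-1)^\pi\theta_{B'}$. Since all forms involved are globally defined, this holds on all of $X_B$ and not only chamber by chamber.
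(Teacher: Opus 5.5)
\textbf{Gap in Step 1: sign changes do not reach all chambers.} Your overall architecture matches the paper's. You use fibre integration along the oriented $T_B^+$-orbits. You descend through $\widehat X_B\to X_B$ because each $D_\varepsilon$ is orthogonal and commutes with $T_B^+$. You get independence of lifts from $(\R^*)^B=\R_{>0}^B\times\{\pm1\}^B$, and the relabelling sign as the determinant of a permutation on $\R^B/\R(1,\ldots,1)$.

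The gap sits exactly where the lemma goes beyond a chamber-by-chamber statement, namely globality on all of $X_B$. Sign changes cannot carry the positive part to the other chambers of $X_B$. Since $K_B\subset T_B$ and $X_B=G_B^\circ/T_B$, the group $K_B$ acts trivially on $X_B$. Concretely, $D_\varepsilon$ replaces the vectors $v_b$ by $\varepsilon_b v_b$ and does not move the points $[v_b]\in\mathbf P^1(\R)$, let alone their cyclic order. So all $K_B$-translates of the positive part lie over the single chamber with the natural cyclic order, and the chambers indexed by the other cyclic orders are never reached. For instance, for $N=4$ the chamber with cyclic order $(1,3,2,4)$ has $p_{23}<0$ and all other $p_{ij}>0$, which is not of the form $\pm\varepsilon_i\varepsilon_j$. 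Consequently, after Steps 1--2 you have $\theta_B$ only over the positive chamber. The closing sentence of Step 3, ``all forms involved are globally defined'', therefore assumes what was to be shown.

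\textbf{The repair.} Use the ingredient of your Step 3 already in Step 1, as the paper does. Every chamber of $\widehat X_B$ is carried to a positive one by some $D_\varepsilon P_\pi$, with $P_\pi$ a permutation matrix. Both factors are orthogonal, hence preserve $\alpha_B$. Both normalize $T_B^+$, since $P_\pi\,\mathrm{diag}(t)\,P_\pi^{-1}$ is again a positive diagonal matrix with permuted entries. They therefore map $T_B^+$-orbits onto $T_B^+$-orbits, multiplying the orbit orientation by $(-1)^\pi$ and by $+1$ respectively. An orientation sign affects neither convergence nor real-analyticity, so these properties transfer from the positive chamber to every chamber.

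\textbf{A smaller, non-load-bearing error.} Your claim that $\alpha_B$ is $T_B^+$-invariant ``being invariant under the diagonal subgroup of $O$'' is false. $T_B^+$ consists of positive diagonal matrices, which are not orthogonal; the diagonal subgroup of $O(\R^B)$ is $\{\pm1\}^B$. No such invariance is needed, however, since $(q_B)_*\alpha_B$ is a form on the base of the bundle $q_B$ in any case.
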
%%%%%%%%%%%%%%%%%%%%%%%%%%%%%%%%%%%%%%%%
%%%%%%%%%%%%%%%%%%%%%%%%%%%%%%%%%%%%%%%%%%%%%

\begin{proof}
Set $\widehat X_B=G_B^\circ/T_B^+$ and denote by
$q_B:G_B^\circ\to\widehat X_B$ the quotient map.
The $T_B^+$-action is free on $G_B^\circ$: an element fixing
a plane induces a projective transformation fixing all the
marked points, hence is scalar since there are at least three
distinct marked points. Thus $q_B$ is a smooth bundle with
oriented fibres of dimension $n+1$. Its fibre integral $
 \widehat\theta_B=(q_B)_*\alpha_B
$ 
has degree $2n-(n+1)=n-1$.

The convergence and real-analytic dependence of this integral
are those established in \cite[\S5.3]{M06} (cf. Proposition
5.3.1(2) and Proposition 5.3.2 therein)  with the dimension parameter
there replaced by $n-1$. Indeed, on a positive chamber the
orbits are parametrized by the interior of the hypersimplex
$$
\Delta_{B}=\bigg\{\, (t_b)_{b\in B}\in [0,1]^B \, \Big\lvert \, \sum_{b\in B} t_b =2\, \bigg\}\, 
$$
and the compactification used there provides fibre integration
with smooth boundary behaviour.

Every other chamber is carried to a positive chamber by a
relabelling of the indices and suitable changes of signs of the
coordinates. These transformations lift to orthogonal transformations
of $\R^B$, which normalize the action of the diagonal torus and
preserve $P_{1,B}$, hence $\alpha_B$
(see \cite[Corollary~3.25]{GelfandMacPherson} and \cite[\S5.7.1]{M06}).
A relabelling multiplies the orientation of the $T_B^+$-orbits
by its signature, whereas changes of signs preserve this
orientation. Hence the convergence and real-analyticity
statements above hold on every chamber of $\widehat X_B$.
%Every other chamber is
%carried to a positive chamber by a relabelling of the indices;
%possible choices of signs of the lifts act by orthogonal
%diagonal matrices. Since $\alpha_B$ is invariant under these
%orthogonal transformations, the same conclusions hold on
%every chamber of $\widehat X_B$.
\sk

The remaining quotient $\widehat X_B\to X_B$ is the finite
covering with deck group $K_B$. For $\varepsilon\in K_B$,
let $D_\varepsilon$ be a diagonal matrix representing it.
Because $D_\varepsilon$ is orthogonal,
$D_\varepsilon^*\alpha_B=\alpha_B$. Moreover,
$D_\varepsilon$ commutes with $T_B^+$, hence it induces
the identity automorphism of the Lie algebra
$\R^B/\R(1,\ldots,1)$ and preserves the chosen orientation
of every $T_B^+$-orbit. Naturality of integration along
oriented fibres gives
$ D_\varepsilon^*\widehat\theta_B=\widehat\theta_B$. 
Consequently $\widehat\theta_B$ descends uniquely to a
real-analytic form $\theta_B$ on $X_B$.
This establishes the independence of the choice of lifts:
two systems of nonzero lifts differ, modulo the change of
basis already absorbed by passage to the Grassmannian,
by diagonal rescalings, whose positive parts lie in $T_B^+$
and whose signs lie in $K_B$.
\sk

As noted above, a bijection of ordered label sets induces an
orthogonal permutation of the coordinate spaces. It
preserves the forms $\alpha_B$ and acts on the oriented
quotient Lie algebra by its signature. The corresponding
pullback of $\theta_B$ is therefore multiplied by that
signature, as asserted.
\end{proof}
\sk

We apply this construction to every ordered set
$A_a=A\setminus\{a\}$, using the same normalization of the
Pontryagin forms and the quotient orientations induced by
the respective label orders. The Stokes argument of
Proposition 5.3.1(1) and Proposition 5.3.5(1)--(2) in 
\cite[\S5.3]{M06}
then applies on each chamber of $X=\M_{0,A}^{\mathbb R}$.
More precisely, the integral of $P_{1,A}^{\,n/2}$ over the
positive diagonal orbits vanishes by Proposition 5.3.1(1),
and the boundary of the hypersimplex is the alternating sum
of its facets. The facets $\{t_a=0\}\cap \Delta_A\simeq \Delta_{A_a}$ yield, with their induced
orientations, the pullbacks $f_a^*\theta_{A_a}$ with
coefficients $(-1)^a$;   the facets $\{t_a=1\}\cap\Delta_A$ contribute zero: on them the plane
 contains the $a$-th basis vector, so these facets are mapped into a submanifold of dimension
$n+1<2n=\deg P_{1,A}^{\,n/2}$, on which $P_{1,A}^{\,n/2}$ restricts to zero.  Relabelling a chamber to the positive one, with suitable changes
of signs, preserves this identity, since the permutation sign
occurs both in the orbit orientation and in the ordered facet
signs, while changes of signs preserve both (see the proof
of Lemma~\ref{L:Euler-global-descent}).
 We therefore obtain, globally,
\[
 \sum_{a\in A}(-1)^a f_a^*\theta_{A_a}=0.
\]
By Lemma~\ref{L:Euler-global-descent}, all the components
$\theta_{A_a}$ are defined on their whole targets
$X_a$, not merely on the images of individual source
chambers. Hence, with one fixed overall normalization,
$\boldsymbol{\mathcal E}_n$
is represented in $\boldsymbol{AR}_{\mathrm{glob}}$ by
$\bigl((-1)^a\theta_{A_a}\bigr)_{a\in A}$.

%%%%%%% gogagoga

This relation is nonzero by
\cite[Proposition~5.3.5(3)]{M06}, and it follows from \cite[Proposition~5.7.2(1)]{M06} that 
for every  $\sigma\in\mathfrak S_A$, we have 
\[
 \sigma\cdot\boldsymbol{\mathcal E}_n
 =(-1)^\sigma\boldsymbol{\mathcal E}_n
 %\qquad \forall\, \sigma\in\mathfrak S_A
 \,.
\]

Proposition~\ref{P:global-sign-uniqueness} therefore gives that 
\[
 \boldsymbol{\mathcal E}_n=c\boldsymbol E_n
% ,\qquad c\in\R\setminus\{0\},
\]
for some constant $c\in\R\setminus\{0\}$, 
which proves Theorem~\ref{Coro:main}.
\mk

%%%%%%%%%%%%%%%%%%%%%%%%%%%%%%%%%%%%%%%%%
%%%%%%%%%%%%%%%%%%%%%%%%%%%%%%%%%%%%%%%%%
%%%%%%%%%%%%%%%%%%%%%%%%%%%%%%%%%%%%%%%%%
\section{\bf Comparison with the explicit formula of \cite{M06}.}
\label{S:Link-explicit}
%%%%%%%%%%%%%%%%%%%%%%%%%%%%%%%%%%%%%%%%%
%%%%%%%%%%%%%%%%%%%%%%%%%%%%%%%%%%%%%%%%%

We now relate the differential form constructed above to the explicit formula
conjectured in \cite[\S5.9.2]{M06}. Throughout this section, $n\geq 2$ is even,
and we set
\[
N=n+2,
\qquad
m=n+1=N-1.
\]
Thus the differential form $\omega_B$ associated with an ordered set $B$ of
cardinality $N$ has degree $N-3=n-1$, as do the components of the Euler
abelian relation of $\boldsymbol{\mathcal W}_{0,n+3}$.

%%%%%%%%%%%%%%%%%%%%%%%%%%%%%%%%%%%%%%%%%
\subsection{The formula of \cite{M06}}
%%%%%%%%%%%%%%%%%%%%%%%%%%%%%%%%%%%%%%%%%
We use the affine coordinates of \cite[\S5.9.2]{M06}. 
Let
$
x'=(x_2,\ldots,x_n)
$ and, for notational convenience, set
$
x_{n+1}=0$ and $x_{n+2}=1$.
\sk 

\vspace{-0.35cm}
For an ordered $r$-tuple $z=(z_1,\ldots,z_r)$, we put ${\rm M}_r(z)=\prod_{1\leq i<j\leq r}(z_i-z_j)$.
%\bk 

Let
$
\tilde x'
=
(x_2,\ldots,x_n,x_{n+1},x_{n+2})
=
(x_2,\ldots,x_n,0,1)$.
We set
$$
M_{0,n+2}(x')={\rm M}_{n+1}(\tilde x')
=-\prod_{i=2}^{n}x_i(x_i-1)\prod_{2\leq i<j\leq n}(x_i-x_j)\, .
$$
 For $2\leq i<j\leq n+2$, we denote by
$ \widetilde x'_{\widehat{\imath\jmath}}
$ the ordered $(n-1)$-tuple obtained from $\widetilde x'$ by deleting
$x_i$ and $x_j$: 
\[
\tilde x'_{\widehat{\imath\jmath}}
=
\big(
x_2,\ldots,x_{i-1},
\widehat{x_i},
x_{i+1},\ldots,x_{j-1},
\widehat{x_j},
x_{j+1},\ldots,x_{n+2}
\big)\, . 
\]

 The explicit formula conjectured in \cite[(5.65)]{M06} involves the function 
\begin{equation}
\label{Eq:epsilon-M06}
\varepsilon_{n-1}(x')
=
\frac{1}{M_{0,n+2}(x')}
\sum_{2\leq i<j\leq n+2}
(-1)^{j-i}\,
{\rm M}_{n-1}\big(
\tilde x'_{\widehat{\imath\jmath}}
 %x'_{\widehat{\imath\jmath}}
\big)
(x_i-x_j)^{n-1}
\Log\big|\,x_i-x_j\,\big| \, .
\end{equation}
The term corresponding to $(i,j)=(n+1,n+2)$ is zero, so that
\eqref{Eq:epsilon-M06} is the same formula as the one written in
\cite[\S5.9.2]{M06}.\footnote{In \cite[\S5.9.2]{M06}, $M_{0,n+3}(x)$ is given by two expressions,
${\rm M}_{n+2}(\tilde x)$ and $\prod_{i=1}^n x_i(x_i-1)\prod_{1\leq k<\ell\leq n}(x_k-x_\ell)$,
which differ by the factor $x_{n+1}-x_{n+2}=-1$. The first one is the convention for which
\cite[(5.65)]{M06} gives back \cite[(5.61)]{M06} when $n=4$; it is the one used here
(this corrects an inconsistency of sign in \cite[\S5.9.2]{M06}).} 
To rewrite \eqref{Eq:epsilon-M06} in a symmetric form, 
set
\[
y=(y_1,\ldots,y_m)
=(x_2,\ldots,x_n,0,1),
\qquad
y_{k\ell}=y_k-y_\ell,
\]
and let  $M_y(t)=\prod_{r=1}^{m}(t-y_r)$ and 
 $M_k=M_y'(y_k)
=
\prod_{r\neq k}(y_k-y_r)$.
%\[
%M_y(t)=\prod_{r=1}^{m}(t-y_r),
%\qquad
%M_k=M_y'(y_k)
%=
%\prod_{r\neq k}(y_k-y_r).
%\]

For distinct $k,\ell\in\{1,\ldots,m\}$, define
\begin{equation}
\label{Eq:C-kl}
K_{k\ell}=K_{k\ell}(y)
=
\frac{(y_k-y_\ell)^n}{M_kM_\ell}.
\end{equation}
Since $n$ is even, $K_{k\ell}=K_{\ell k}$.

\begin{lem}
\label{L:epsilon-C}
With the notation above,
\begin{equation}
\label{Eq:epsilon-C}
\varepsilon_{n-1}(x')
=
\sum_{1\leq k<\ell\leq m}
K_{k\ell}\Log|y_{k\ell}|.
\end{equation}
\end{lem}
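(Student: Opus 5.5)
The identity is a term-by-term comparison, once we reindex. Under the relabelling $y_k=x_{k+1}$, i.e.\ $k=i-1$ and $\ell=j-1$, the pairs $2\leq i<j\leq n+2$ correspond bijectively to the pairs $1\leq k<\ell\leq m$. Moreover $\tilde x'=y$, and $x_i-x_j=y_{k\ell}$. It therefore suffices to prove, for each fixed pair $k<\ell$, the coefficient identity
\[
(-1)^{\ell-k}\,\frac{{\rm M}_{m-2}\big(y_{\widehat{k}\widehat{\ell}}\big)\,y_{k\ell}^{\,n-1}}{{\rm M}_m(y)}
=\frac{y_{k\ell}^{\,n}}{M_kM_\ell},
\]
where $M_{0,n+2}(x')={\rm M}_m(y)$ and $y_{\widehat k\widehat\ell}$ is $y$ with the $k$-th and $\ell$-th entries deleted. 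Here $m-2=n-1$, which matches ${\rm M}_{n-1}$.

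To prove this, I will split the Vandermonde-type product ${\rm M}_m(y)=\prod_{r<s}(y_r-y_s)$ according to how many of the indices $r,s$ lie in $\{k,\ell\}$. This gives
\[
{\rm M}_m(y)={\rm M}_{m-2}\big(y_{\widehat k\widehat\ell}\big)\cdot (y_k-y_\ell)\cdot P_k P_\ell .
\]
Here $P_k=\prod_{r\neq k,\ell}\epsilon_{kr}(y_k-y_r)$, with $\epsilon_{kr}=1$ if $k<r$ and $\epsilon_{kr}=-1$ if $r<k$; $P_\ell$ is defined similarly. Thus $P_k=(-1)^{\#\{r<k,\ r\neq\ell\}}\prod_{r\neq k,\ell}(y_k-y_r)=(-1)^{k-1}M_k/y_{k\ell}$, since $\ell>k$ is not among the $r<k$. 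Likewise $P_\ell=(-1)^{\ell-2}M_\ell/(y_\ell-y_k)$, since $k<\ell$ is excluded from the count $\ell-1$. Putting these together,
\[
{\rm M}_m(y)={\rm M}_{m-2}\big(y_{\widehat k\widehat\ell}\big)\,(-1)^{k+\ell-3}\,\frac{M_kM_\ell}{y_{k\ell}\cdot(-y_{k\ell})}\,y_{k\ell}
=(-1)^{k+\ell}\,{\rm M}_{m-2}\big(y_{\widehat k\widehat\ell}\big)\,\frac{M_kM_\ell}{y_{k\ell}} .
\]

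Substituting this into the left side of the coefficient identity gives
\[
(-1)^{\ell-k}(-1)^{k+\ell}\,\frac{y_{k\ell}^{\,n-1}\,y_{k\ell}}{M_kM_\ell}=\frac{y_{k\ell}^{\,n}}{M_kM_\ell}=K_{k\ell},
\]
as required, because $(-1)^{2\ell}=1$.

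The one delicate step is the sign bookkeeping in the factorization: counting how many factors in $P_k$ and $P_\ell$ are reversed, and making sure the single factor $(y_k-y_\ell)$ is not double counted. I would verify it on the case $m=3$ as a check. The term with $(k,\ell)=(m-1,m)$ is harmless, since $\Log|y_{k\ell}|=\Log 1=0$ makes both sides contribute nothing. No earlier results of the paper are needed.
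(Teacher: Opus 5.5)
Your proposal is correct and follows essentially the same route as the paper: reindex by $k=i-1$, $\ell=j-1$, then factor the Vandermonde product as ${\rm M}_m(y)=(-1)^{\ell-k}\,{\rm M}_{m-2}(y_{\widehat k\widehat\ell})\,M_kM_\ell/y_{k\ell}$ and compare coefficients. Your sign bookkeeping through $P_k$ and $P_\ell$ is accurate and reproduces the paper's identity $\Delta(y)/\Delta_{\widehat{k\ell}}(y)=(-1)^{\ell-k}M_kM_\ell/y_{k\ell}$.
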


\begin{proof}
Setting $
\Delta(y)=\prod_{1\leq r<s\leq m}(y_r-y_s)$, 
since $y=\tilde x'$, we have
\begin{equation}
\label{Eq:Delta-M0}
\Delta(y)=M_{0,n+2}(x').
\end{equation}

For $k<\ell$, let $\Delta_{\widehat{k\ell}}(y)$ denote the
Vandermonde product of the ordered tuple obtained from $y$ by deleting
$y_k$ and $y_\ell$.  We have 
\[
\frac{\Delta(y)}{\Delta_{\widehat{k\ell}}(y)}
=
(-1)^{\ell-k-1}y_{k\ell}
\prod_{s\neq k,\ell}(y_k-y_s)(y_\ell-y_s)
=
(-1)^{\ell-k}\frac{M_kM_\ell}{y_{k\ell}},
\]
from which it follows that 
%Moreover, a direct separation in the Vandermonde product of the factors
%involving $y_k$ or $y_\ell$ gives
\begin{equation}
\label{Eq:Vandermonde-minor}
\frac{\Delta_{\widehat{k\ell}}(y)}{\Delta(y)}
=
(-1)^{\ell-k}\frac{y_{k\ell}}{M_kM_\ell}\, .
\end{equation}

Now put $k=i-1$ and $\ell=j-1$. Then
$j-i=\ell-k$ and
$
{\rm M}_{n-1}(\tilde x'_{\widehat{\imath\jmath}})
%{\rm M}_{n-1}(x'_{\widehat{\imath\jmath}})
=\Delta_{\widehat{k\ell}}(y)$. Using
\eqref{Eq:Delta-M0} and \eqref{Eq:Vandermonde-minor}, it follows that the coefficient of
$\Log|y_{k\ell}|$ in \eqref{Eq:epsilon-M06} is
\[
(-1)^{\ell-k}
\frac{\Delta_{\widehat{k\ell}}(y)}{\Delta(y)}
(y_{k\ell})^{n-1}
=
\frac{(y_{k\ell})^n}{M_kM_\ell}
=K_{k\ell}\, . \qedhere
\]
%This proves \eqref{Eq:epsilon-C}.
\end{proof}

%\begin{rem}
%The coefficients $K_{k\ell}$ satisfy
%\begin{equation}
%\label{Eq:sum-C-zero}
%\sum_{1\leq k<\ell\leq m}K_{k\ell}=0.
%\end{equation}
%Indeed, if
%$S_r=\sum_{k=1}^m y_k^r/M_k$, Lagrange interpolation gives
%$S_0=\cdots=S_{m-2}=0$ and $S_{m-1}=1$. Since $n=m-1$ is even,
%\[
%2\sum_{k<\ell}K_{k\ell}
%=
%\sum_{k,\ell=1}^m\frac{(y_k-y_\ell)^{m-1}}{M_kM_\ell}
%=
%\sum_{r=0}^{m-1}
%(-1)^r{m-1\choose r}S_{m-1-r}S_r=0.
%\]
%This identity is not needed in the proof below, but it is consistent with the
%projective nature of the formula: multiplying all differences $y_{k\ell}$ by
%the same nonzero constant does not change the right-hand side of
%\eqref{Eq:epsilon-C}.
%\end{rem}

%%%%%%%%%%%%%%%%%%%%%%%%%%%%%%%%%%%%%%%%%
\subsection{A differential-form rewriting}
%%%%%%%%%%%%%%%%%%%%%%%%%%%%%%%%%%%%%%%%%
We regard the functions $K_{k\ell}$ of \eqref{Eq:C-kl} as rational
functions on 
$$\mathcal Y_m=
\big\{(y_s)_{s=1}^m \in\C^m\,;\,y_k\neq y_\ell
\text{ for }k\neq\ell\big\}$$ and we set 
%$T=\sum_{r=1}^m\frac{\partial}{\partial y_r}$, 
%$E=\sum_{r=1}^m y_r\frac{\partial}{\partial y_r}$ and 
%%%%%%%%%%%%%%%%%%%%%%%%%%%%%%%%%%%%%%%%%
\begin{equation}%%%%%%%%%%%%%%%%%%%%%%%%%%%%%%%%
\label{Eq:Xi}%%%%%%%%%%%%%%%%%%%%%%%%%%%%%%%%%%
%%%%%%%%%%%%%%%%%%%%%%%%%%%%%%%%%%%%%%%%%
T=\sum_{r=1}^m\frac{\partial}{\partial y_r}\, , \qquad 
E=\sum_{r=1}^m y_r\frac{\partial}{\partial y_r} 
%T=\sum_{r=1}^m \partial_{y_r}\, , \qquad 
%E=\sum_{r=1}^m y_r\partial_{y_r} 
\qquad \mbox{and}\qquad 
\Xi=\iota_E\iota_T\Big( \wedge_{s=1}^m \dd y_s \Big) \, .
%\dd y_1\wedge\cdots\wedge\dd y_m \big) \, . 
%%%%%%%%%%%%%%%%%%%%%%%%%%%%%%%%%%%%%%%%%
\end{equation}%%%%%%%%%%%%%%%%%%%%%%%%%%%%%%%%%
%%%%%%%%%%%%%%%%%%%%%%%%%%%%%%%%%%%%%%%%%
%\[
%\mathcal Y_m=
%\big\{(y_1,\ldots,y_m)\in\C^m\,;\,y_k\neq y_\ell
%\text{ for }k\neq\ell\big\}.
%\] 

For $1\leq k<\ell\leq m$, define on $\mathcal Y_m$
\begin{equation}
\label{Eq:Theta-kl}
\Theta_{k\ell}
=
\bigwedge_{\substack{1\leq s\leq m\\s\notin\{k,\ell\}}}
\dd\Log\left(\frac{y_s-y_k}{y_s-y_\ell}\right),
\end{equation}
where the factors are ordered according to increasing $s$.

\begin{lem}
\label{L:wedge-C}
For every $1\leq k<\ell\leq m$, one has on $\mathcal Y_m$
\begin{equation}
\label{Eq:Theta-C}
\Theta_{k\ell}
=
(-1)^{\ell-k-1}K_{k\ell}\,\Xi.
\end{equation}
\end{lem}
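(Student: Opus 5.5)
The plan mirrors the argument of Section~\ref{S:Proof-Prop-1} that gave \eqref{Eq:Delta-b-Theta}. First, both sides of \eqref{Eq:Theta-C} lie in the line $\wedge^{m-2}W$, where $W$ is the $K$-vector space of rational $1$-forms on $\mathcal Y_m$ annihilated by $T$ and $E$; here $K=\C(y_1,\ldots,y_m)$ and $\dim_K W=m-2$. Indeed, each factor $\dd\Log\big((y_s-y_k)/(y_s-y_\ell)\big)$ is killed by $T$, since the ratio is translation invariant, and by $E$, since the ratio is homogeneous of degree $0$. Hence $\Theta_{k\ell}\in\wedge^{m-2}W$. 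The fact that $\Xi\in\wedge^{m-2}W$ is proved exactly as for $\Theta$ there, using a dual basis adapted to $(T,E,X_1,\ldots,X_{m-2})$. Since $\wedge^{m-2}W$ is one-dimensional over $K$, we get $\Theta_{k\ell}=\varrho_{k\ell}\,\Xi$ for some rational function $\varrho_{k\ell}$. It then remains to compute $\varrho_{k\ell}$ by evaluating both sides on one well-chosen $(m-2)$-tuple of vector fields.

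We evaluate on $\xi=(\partial_{y_s})_{s\neq k,\ell}$, with the indices taken in increasing order. On the left side, the factor indexed by $s$ is $\dd y_s\big(\frac{1}{y_s-y_k}-\frac{1}{y_s-y_\ell}\big)$ plus terms in $\dd y_k,\dd y_\ell$. These extra terms vanish on $\xi$, so the matrix is diagonal and
\[
\Theta_{k\ell}(\xi)=\prod_{s\neq k,\ell}\frac{y_k-y_\ell}{(y_s-y_k)(y_s-y_\ell)}
=\frac{(y_k-y_\ell)^{m-2}}{\prod_{s\neq k,\ell}(y_s-y_k)(y_s-y_\ell)}.
\]
On the right side, $\Xi(\xi)=\big(\wedge_s \dd y_s\big)(T,E,\xi)$, and only the $\partial_{y_k},\partial_{y_\ell}$ components of $T$ and $E$ contribute. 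The result is $\epsilon\cdot\det\begin{pmatrix}1&y_k\\1&y_\ell\end{pmatrix}=\epsilon\,(y_\ell-y_k)$, where $\epsilon$ is the sign of the ordered list $(k,\ell,1,\ldots,\widehat k,\ldots,\widehat\ell,\ldots,m)$. Moving $k$ to the front costs $k-1$ transpositions and then moving $\ell$ costs $\ell-2$, so $\epsilon=(-1)^{k+\ell-3}=(-1)^{\ell-k-1}$.

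Finally we compare with $K_{k\ell}$. We have
\[
M_kM_\ell=\prod_{s\neq k}(y_k-y_s)\prod_{s\neq\ell}(y_\ell-y_s)=-(y_k-y_\ell)^2\prod_{s\neq k,\ell}(y_k-y_s)(y_\ell-y_s),
\]
and since $(y_k-y_s)(y_\ell-y_s)=(y_s-y_k)(y_s-y_\ell)$, this gives
\[
\Theta_{k\ell}(\xi)=-\frac{(y_k-y_\ell)^{m}}{M_kM_\ell}=-(y_k-y_\ell)\,K_{k\ell},
\]
using $m-2=n-1$, i.e. $m=n+1$. Hence
\[
\varrho_{k\ell}=\frac{-(y_k-y_\ell)K_{k\ell}}{(-1)^{\ell-k-1}(y_\ell-y_k)}=(-1)^{\ell-k-1}K_{k\ell},
\]
which is \eqref{Eq:Theta-C}.

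The main obstacle is purely sign bookkeeping. The points needing care are the permutation sign $\epsilon$, the orientation of the $2\times2$ determinant given the order $\iota_E\iota_T$ (that is, $\Omega(T,E,\ldots)$, the same convention as in Section~\ref{S:Proof-Prop-1}), and the sign from $M_kM_\ell$. I would double-check them on the smallest case $m=3$, $(k,\ell)=(1,2)$, directly.
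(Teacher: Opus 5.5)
Your proposal is correct and takes essentially the same approach as the paper. Both place $\Theta_{k\ell}$ and $\Xi$ in the line $\wedge^{m-2}W$ and fix the proportionality factor by evaluating on $(\partial_{y_s})_{s\neq k,\ell}$; your values $\Xi(\xi)=(-1)^{\ell-k-1}(y_\ell-y_k)=(-1)^{\ell-k}y_{k\ell}$, the diagonal computation of $\Theta_{k\ell}(\xi)$, and the identity $M_kM_\ell=-(y_{k\ell})^2\prod_{s\neq k,\ell}(y_s-y_k)(y_s-y_\ell)$ are exactly the ones used there, and your signs all check out.
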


\begin{proof}
We use the same argument as in the computation leading to
\eqref{Eq:Delta-b-Theta}. Indeed, 
let $W\subset\Omega^1_{\C(y)}$ denote the
vector space of rational $1$-forms in the variables $y_1,\ldots,y_m$ annihilating both $T$ and $E$. 
Then $ W$ has dimension $m-2$ and both $\Theta_{k\ell}$ and $\Xi$ belong to the
one-dimensional space $\bigwedge^{m-2}W$, and are therefore
proportional over $\mathbf C(y)$. 

To determine the proportionality factor, we evaluate these forms on 
the $(m-2)$-tuple
 of rational vector fields 
$\xi_{k\ell}
=\big( \,
{\partial}/{\partial y_s}\big)_{s\notin\{k,\ell\}},
$
where the ${\partial}/{\partial y_s}$'s are ordered by increasing $s \notin\{k,\ell\}$. 
A direct calculation gives
\[
\Xi\big(\xi_{k\ell}\big)
=
(-1)^{\ell-k}y_{k\ell}
\qquad \mbox{ and } \qquad 
\Theta_{k\ell}\big(\xi_{k\ell}\big)
=
\frac{(y_{k\ell})^{m-2}}
{\displaystyle\prod_{s\neq k,\ell}y_{sk}y_{s\ell}}
\, .
\]
Since
$
M_kM_\ell
=
-(y_{k\ell})^2
\prod_{s\neq k,\ell}y_{sk}y_{s\ell}$,
and $n=m-1$, it follows that
\[
\frac{\Theta_{k\ell}(\xi_{k\ell})}
     {\Xi(\xi_{k\ell})}
=
(-1)^{\ell-k-1}
\frac{(y_{k\ell})^n}{M_kM_\ell}
=
(-1)^{\ell-k-1}K_{k\ell},
\]
which proves \eqref{Eq:Theta-C}.
\end{proof}

For $k<\ell$, set
\begin{equation}
\label{Eq:Omega-kl}
\Omega_{k\ell}
=
(-1)^{\ell-k-1}\Log|y_{k\ell}|\,\Theta_{k\ell}.
\end{equation}
Lemma~\ref{L:wedge-C} immediately gives
\begin{equation}
\label{Eq:Omega-C}
\Omega_{k\ell}
=
K_{k\ell}\Log|y_{k\ell}|\,\Xi.
\end{equation}

We now return to the normalization used above. We set
$$ U=\left\{\, 
(y_i)_{i=1}^{m-2} \in\C^{m-2}
\,\,\vert\,\,
y_i\neq0,1,\, y_i\neq y_j\ (i\neq j)\, 
\right\}\, , 
$$
let 
\[
\jmath:
%\C^{m-2}\supset 
U\longrightarrow\mathcal Y_m,
\qquad
(y_1,\ldots,y_{m-2})
\longmapsto
(y_1,\ldots,y_{m-2},0,1)
\]
be the corresponding affine section, and put
$\boldsymbol\eta
=\dd y_1\wedge\cdots\wedge\dd y_{m-2}
=\dd x_2\wedge\cdots\wedge\dd x_n$.
A direct evaluation of \eqref{Eq:Xi} gives
$$\jmath^*\Xi(\partial_{y_1},\ldots,\partial_{y_{m-2}}\big)=
\big(
\dd y_1\wedge\cdots\wedge\dd y_m
\big)
(T,E,\partial_{y_1},\ldots,\partial_{y_{m-2}}\big)= y_m-y_{m-1}=1$$ hence we have 
$\jmath^*\Xi=\boldsymbol\eta$. 
%\begin{equation}
%\label{Eq:Xi-eta}
%\jmath^*\Xi=\boldsymbol\eta.
%\end{equation}
\sk

 Combining this with \eqref{Eq:Omega-C} 
%and  \eqref{Eq:Xi-eta},
 and Lemma~\ref{L:epsilon-C}, we obtain
\begin{equation}
\label{Eq:epsilon-sum-Omega}
\varepsilon_{n-1}(x')\,\boldsymbol\eta
= 
\sum_{1\leq k<\ell\leq m}\jmath^*\Big(\Omega_{k\ell}\Big)\, .
\end{equation}

%%%%%%%%%%%%%%%%%%%%%%%%%%%%%%%%%%%%%%%%%
\subsection{Antisymmetrization}
%%%%%%%%%%%%%%%%%%%%%%%%%%%%%%%%%%%%%%%%%
In particular, on $\mathcal Y_m$, we have 
\begin{equation}
\label{Eq:Omega-12}
\Omega_{12}
=
\Log \big|\,y_1-y_2\,\big|
\bigwedge_{s=3}^{m}
\dd\Log\left(\frac{y_s-y_1}{y_s-y_2}\right).
\end{equation}
The symmetric group $\mathfrak S_m$ acts on $\mathcal Y_m$ by permutation
of the coordinates.

%zzzz

\begin{lem}
\label{L:Alt-Omega}
On  $\mathcal Y_m$, one has 
%%%%%%%%%%%%%%%%%%%%%%%%%%%%%%%%%%%%%%%%%
\begin{equation}%%%%%%%%%%%%%%%%%%%%%%%%%%%%%%%%
\label{Eq:Alt-Omega-unnormalized}%%%%%%%%%%%%%%%%%%%%%%
%%%%%%%%%%%%%%%%%%%%%%%%%%%%%%%%%%%%%%%%%
\sum_{\sigma\in\mathfrak S_m}
(-1)^\sigma\sigma^*\big( \Omega_{12}\big)
=
2(m-2)! \hspace{-0.25cm}\sum_{1\leq k<\ell\leq m}\Omega_{k\ell}.
\end{equation}
Consequently, pulling-back by  $\jmath$, we get 
\begin{equation}
\label{Eq:Alt-Omega}
\jmath^*\Big( \Alt_m(\Omega_{12})\Big)
=
\frac{2}{m(m-1)}\,
\varepsilon_{n-1}(x')\,\boldsymbol\eta\, .
\end{equation}
%%%%%%%%%%%%%%%%%%%%%%%%%%%%%%%%%%%%%%%%%
\end{lem}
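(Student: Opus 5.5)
The plan is to compute $\sigma^*\Omega_{12}$ for an arbitrary $\sigma\in\mathfrak S_m$ and regroup the sum according to the unordered pair $\{\sigma_1,\sigma_2\}$. The permutation $\sigma$ acts by $y\mapsto (y_{\sigma(i)})_i$. By \eqref{Eq:Omega-12},
\[
\sigma^*\Omega_{12}=\Log|y_{\sigma_1}-y_{\sigma_2}|\bigwedge_{s=3}^m \dd\Log\Big(\frac{y_{\sigma_s}-y_{\sigma_1}}{y_{\sigma_s}-y_{\sigma_2}}\Big).
\]
Fix $k<\ell$ and consider all $\sigma$ with $\{\sigma_1,\sigma_2\}=\{k,\ell\}$.

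First, suppose $(\sigma_1,\sigma_2)=(k,\ell)$. Reordering the remaining positions $\sigma_3,\dots,\sigma_m$ changes both $(-1)^\sigma$ and the wedge product by the same sign, so each of the $(m-2)!$ such $\sigma$ contributes the same term. For the reference one, $\sigma=(k,\ell,\text{rest increasing})$, we have $\sigma^*\Omega_{12}=\Log|y_{k\ell}|\,\Theta_{k\ell}$. Moving $k$ and then $\ell$ to the front of the increasing list costs $(k-1)+(\ell-2)$ transpositions, so $(-1)^\sigma=(-1)^{k+\ell-3}=(-1)^{\ell-k-1}$. Hence this block contributes $(m-2)!\,(-1)^{\ell-k-1}\Log|y_{k\ell}|\Theta_{k\ell}=(m-2)!\,\Omega_{k\ell}$, by \eqref{Eq:Omega-kl}.

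Second, suppose $(\sigma_1,\sigma_2)=(\ell,k)$. Composing with the transposition of the first two positions flips the signature, and it inverts each cross-ratio, giving $m-2=n-1$ sign changes in the wedge product. Since $n$ is even, $m-2$ is odd, so the two sign changes cancel. The coefficient $\Log|y_{k\ell}|$ is symmetric, so this block contributes another $(m-2)!\,\Omega_{k\ell}$. Summing over pairs gives \eqref{Eq:Alt-Omega-unnormalized}.

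For the consequence, divide by $m!$ to get $\Alt_m(\Omega_{12})=\frac{2(m-2)!}{m!}\sum_{k<\ell}\Omega_{k\ell}=\frac{2}{m(m-1)}\sum_{k<\ell}\Omega_{k\ell}$. Then pull back by $\jmath$ and apply \eqref{Eq:epsilon-sum-Omega}.

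The only delicate step is the sign bookkeeping in the reference permutation, namely checking that $(-1)^{k+\ell-3}$ exactly matches the factor $(-1)^{\ell-k-1}$ built into $\Omega_{k\ell}$. One should also confirm that the pullback convention $\sigma^*$ (substitution $y_i\mapsto y_{\sigma(i)}$) produces $\Theta_{k\ell}$ with its factors in increasing order of $s$, rather than in an order twisted by $\sigma^{-1}$. If a discrepancy appeared, it would be uniform in $(k,\ell)$, since the same convention was used throughout Section~\ref{S:Proof-Prop-1}.
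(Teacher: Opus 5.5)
Your proof is correct and follows the paper's argument essentially verbatim. It uses the same ingredients: the reference permutation $(k,\ell,\text{rest increasing})$ with signature $(-1)^{\ell-k-1}$, the invariance of the signed terms under reordering of the last $m-2$ positions, and the oddness of $m-2=n-1$ to handle the swap of the first two entries, yielding $2(m-2)!$ copies of each $\Omega_{k\ell}$ and then $\frac{2}{m(m-1)}$ after dividing by $m!$. Under the paper's convention that $\sigma^*$ substitutes $y_i\mapsto y_{\sigma(i)}$, the factors of $\Theta_{k\ell}$ do come out in increasing order of $s$, so your closing caveat is unnecessary.
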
%%%%%%%%%%%%%%%%%%%%%%%%%%%%%%%%%%%%
%%%%%%%%%%%%%%%%%%%%%%%%%%%%%%%%%%%%%%%%%
\begin{proof}
Fix $k<\ell$, and let
$\pi_{k\ell}\in\mathfrak S_m$ be the permutation such that 
$\big(\pi_{k\ell}(i)\big)_{i=1}^m=
(k,\ell,s_1,\ldots,s_{m-2})$ 
with $s_1<\cdots<s_{m-2}$. Its signature is
\[
(-1)^{\pi_{k\ell}}
=(-1)^{k+\ell-3}
=(-1)^{\ell-k-1}.
\]
It follows directly from \eqref{Eq:Omega-kl} and \eqref{Eq:Omega-12} that
$(-1)^{\pi_{k\ell}}\pi_{k\ell}^*\big(\Omega_{12}\big) 
=\Omega_{k\ell}$. 
%\begin{equation}
%\label{Eq:canonical-pair}
%(-1)^{\pi_{k\ell}}\pi_{k\ell}^*\Omega_{12}
%=\Omega_{k\ell}.
%\end{equation}
\sk

If $\sigma\in \mathfrak S_m$ is such that $\sigma(1)=k$ and $\sigma(2)=\ell$, then $\sigma$ is obtained from
$\pi_{k\ell}$ by permuting the last $m-2$ entries. Such a permutation changes
the sign of the wedge product by exactly the same sign as it changes the
signature of $\sigma$. Hence every one of the $(m-2)!$ corresponding signed
terms $(-1)^\sigma\sigma^*(\Omega_{12})$ is equal to $\Omega_{k\ell}$.

The same is true when $\sigma(1)=\ell$ and $\sigma(2)=k$. Indeed, exchanging
the first two entries reverses every logarithmic ratio in
\eqref{Eq:Omega-12}, and hence multiplies the wedge product by
$(-1)^{m-2}=-1$, because $m-2=n-1$ is odd. At the same time it reverses the
signature of the permutation. Thus the signed term is unchanged. We obtain
$2(m-2)!$ copies of $\Omega_{k\ell}$ for each unordered pair $\{k,\ell\}$,
which proves \eqref{Eq:Alt-Omega-unnormalized}.
\sk 

Dividing by $m!$ gives
$
\Alt_m(\Omega_{12})
=
\frac{2}{m(m-1)}
\sum_{k<\ell}\Omega_{k\ell}$.
 After pulling back by $\jmath$, formula \eqref{Eq:Alt-Omega} follows from
\eqref{Eq:epsilon-sum-Omega}.
\end{proof}

%%%%%%%%%%%%%%%%%%%%%%%%%%%%%%%%%%%%%%%%%
\subsection{Identification with the form $\omega_B$}
%%%%%%%%%%%%%%%%%%%%%%%%%%%%%%%%%%%%%%%%%
\label{SS:Identification}
Let $N=n+2=m+1$ and 
$B=\{1,\ldots,N\}$,
and consider the standard affine chart of $\M_{0,B}$ defined by
$
p_{N-2}=0$,
$p_{N-1}=1$ and 
$
p_N=\infty$.

We lift the normalized projective configuration to
$\widetilde U_B$ by choosing 
\[
v_r=(y_r,1)\quad(1\leq r\leq m),
\qquad
v_N=(1,0),
\]
with $y_{m-1}=0$ and $y_m=1$ 
so that $p_r=[v_r]$. This yields a  section of
$q_B:\widetilde U_B\to\M_{0,B}$
over the affine chart under consideration.

From \eqref{Eq:omega-gamma}, we have 
$
\widetilde\omega_B
=(N-2)\Alt_B(\gamma_B)
$, with $
\gamma_B
=
\Log|[12]|
\bigwedge_{r=3}^{N-1}
\dd\Log\left(\frac{[1r]}{[2r]}\right)\, .$ 
For the above choice of lifts, $\gamma_B$ is precisely the form
$\Omega_{12}$ defined in \eqref{Eq:Omega-12}. 
Moreover, in the antisymmetrization of $\gamma_B$, every term indexed by a
permutation $\sigma\in\mathfrak S_N$ with $\sigma(N)\neq N$ vanishes on this
section. Indeed, if the label $N$ occurs in one of the first two positions,
the logarithmic coefficient is $\Log|[Nj]|=\Log 1=0$; if it occurs in one
of the positions $3,\ldots,N-1$, one of the logarithmic differentials is
$
\dd\Log\big({[iN]}/{[jN]}\big)=0$.
Consequently, only the permutations fixing $N$ contribute, and therefore
%%%%%%%%%%%%%%%%%%%%%%%%%%%%%%%%%%%%%%%%
\begin{align}%%%%%%%%%%%%%%%%%%%%%%%%%%%%%%%%%
%%%%%%%%%%%%%%%%%%%%%%%%%%%%%%%%%%%%%%%%
\omega_B
&=
\frac{N-2}{N!} \jmath^*\left(
\sum_{\sigma\in\mathfrak S_m}
(-1)^\sigma\sigma^*\big( \Omega_{12}\big)\right)
=
\frac{N-2}{N}  \jmath^*\Big(\Alt_m(\Omega_{12})\Big)\,.
\label{Eq:omega-Alt-Omega}
%%%%%%%%%%%%%%%%%%%%%%%%%%%%%%%%%%%%%%%%
\end{align}%%%%%%%%%%%%%%%%%%%%%%%%%%%%%%%%%%
%%%%%%%%%%%%%%%%%%%%%%%%%%%%%%%%%%%%%%%%
%\begin{align}
%\omega_B
%&=
%\frac{N-2}{N!}
%\sum_{\sigma\in\mathfrak S_m}
%(-1)^\sigma\sigma^*\big( \Omega_{12}\big)
%=
%\frac{N-2}{N}\Alt_m(\Omega_{12}).
%\label{Eq:omega-Alt-Omega}
%\end{align}
Since $N-2=n$ and $m=n+1$, Lemma~\ref{L:Alt-Omega} yields the following
comparison formula.

%%%%%%%%%%%%%%%%%%%%%%%%%%%%%%%%%%%%%%%%%
\begin{prop}
\label{P:comparison-explicit}
%%%%%%%%%%%%%%%%%%%%%%%%%%%%%%%%%%%%%%%%%
For every even integer $n\geq2$, on the affine chart of $\M_{0,n+2}$
described above, one has
%\begin{equation}
%\label{Eq:omega-epsilon}
%\boxed{
%\omega_B=
%-\frac{2}{(n+1)(n+2)}\,\varepsilon_{n-1}(x')\,
%\dd x_2\wedge\cdots\wedge\dd x_n.}
%\end{equation}
%Equivalently,
\begin{equation}
\label{Eq:omega-epsilon}
\varepsilon_{n-1}(x')\,
\dd x_2\wedge\cdots\wedge\dd x_n
=
\frac{(n+1)(n+2)}{2}\,\omega_B\,.
\end{equation}
%Equivalently,
%%%%%%%%%%%%%%%%%%%%%%%%%%%%%%%%%%%%%%%%%
\end{prop}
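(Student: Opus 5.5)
The plan is to combine the two computations just made. First I will make precise what ``$\omega_B$ on the affine chart'' means: it is the pull-back of $\widetilde\omega_B=q_B^*(\omega_B)$ by the section of $q_B$ given by the lifts $v_r=(y_r,1)$ for $r\le m$ and $v_N=(1,0)$. By the first point of Proposition~\ref{Prop:descent}, this pull-back does not depend on the chosen section, so computing it on this particular section suffices.

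On this section, formula \eqref{Eq:omega-gamma} gives $\widetilde\omega_B=(N-2)\Alt_B(\gamma_B)$. I will then justify \eqref{Eq:omega-Alt-Omega}. Every permutation $\sigma\in\mathfrak S_N$ with $\sigma(N)\neq N$ contributes zero, for two reasons: if $N$ sits in position $1$ or $2$, the coefficient is $\Log|[Nj]|=\Log 1=0$; otherwise one factor is $\dd\Log([iN]/[jN])=\dd\Log 1=0$. The permutations fixing $N$ form $\mathfrak S_m$ with the same signature, and for them $\sigma^*\gamma_B$ restricts to $\jmath^*\sigma^*\Omega_{12}$, because $[ij]=y_i-y_j$ for $i,j\le m$. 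This yields
\[
\omega_B=\frac{N-2}{N!}\,m!\,\jmath^*\big(\Alt_m(\Omega_{12})\big)=\frac{N-2}{N}\,\jmath^*\big(\Alt_m(\Omega_{12})\big).
\]
One point needs checking here: $\dd\Log\big([1r]/[2r]\big)=\dd\Log\big((y_1-y_r)/(y_2-y_r)\big)=\dd\Log\big((y_r-y_1)/(y_r-y_2)\big)$, so the sign conventions in $\gamma_B$ and in $\Omega_{12}$ agree.

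Next I insert Lemma~\ref{L:Alt-Omega}, namely \eqref{Eq:Alt-Omega}, which states $\jmath^*\Alt_m(\Omega_{12})=\frac{2}{m(m-1)}\,\varepsilon_{n-1}(x')\,\boldsymbol\eta$. With $N=n+2$ and $m=n+1$ this gives
\[
\omega_B=\frac{n}{n+2}\cdot\frac{2}{(n+1)n}\,\varepsilon_{n-1}\,\boldsymbol\eta=\frac{2}{(n+1)(n+2)}\,\varepsilon_{n-1}\,\boldsymbol\eta .
\]
Inverting this is exactly \eqref{Eq:omega-epsilon}. I also have to identify $\boldsymbol\eta=\dd x_2\wedge\cdots\wedge\dd x_n$, which holds because $y_r=x_{r+1}$, and to match the chart $p_{N-2}=0$, $p_{N-1}=1$, $p_N=\infty$ with the normalization $x_{n+1}=0$, $x_{n+2}=1$ of \cite[\S5.9.2]{M06}.

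The main obstacle is sign and convention bookkeeping rather than any new idea. The identity $\jmath^*\Xi=\boldsymbol\eta$ has to be used with the ordering $y_{m-1}=0$, $y_m=1$. The normalization of $\Alt$, with its factor $1/N!$ as opposed to $1/m!$, has to be tracked consistently. I also have to check that the correspondence between the labels of $B$ and the coordinates $y_r$ preserves the order, so that no extra signature appears. I would verify the final constant independently in the case $n=2$, where $\omega_B$ is a $1$-form on $\M_{0,4}$ and both sides can be computed by hand.
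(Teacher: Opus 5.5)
Your proposal is correct and follows the paper's own proof. You justify \eqref{Eq:omega-Alt-Omega} by the same argument that the permutations with $\sigma(N)\neq N$ contribute nothing on the chosen section, then substitute \eqref{Eq:Alt-Omega} and simplify $\frac{n}{n+2}\cdot\frac{2}{n(n+1)}=\frac{2}{(n+1)(n+2)}$.
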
%%%%%%%%%%%%%%%%%%%%%%%%%%%%%%%%%%%
%%%%%%%%%%%%%%%%%%%%%%%%%%%%%%%%%%%%%%%%%
\begin{proof}
Combining \eqref{Eq:omega-Alt-Omega} with \eqref{Eq:Alt-Omega}, we obtain
\[
\omega_B
=
\frac{n}{n+2}
\left(
\frac{2}{n(n+1)}
\varepsilon_{n-1}(x')\,\boldsymbol\eta
\right)
=
\frac{2}{(n+1)(n+2)}
\varepsilon_{n-1}(x')\,\boldsymbol\eta,
\]
which is equivalent to \eqref{Eq:omega-epsilon}.
\end{proof}

%\textcolor{red}{\footnote{OLD}Formula \eqref{Eq:omega-epsilon} proves, for every even $n\geq2$, the
%explicit formula conjectured in \cite[\S5.9.2]{M06}: the form
%$\varepsilon_{n-1}(x')\,\dd x_2\wedge\cdots\wedge\dd x_n$ occurring there
%is, up to the explicit nonzero scalar $-(n+1)(n+2)/2$, exactly the form
%$\omega_B$ constructed in the present paper. Since the other components are
%obtained by relabelling, the same scalar relates the corresponding complete
%abelian relations.}
%\bk

Formula~\eqref{Eq:omega-epsilon} proves, for every even $n\geq2$, the
explicit formula conjectured in \cite[\S5.9.2]{M06}: the form
$\varepsilon_{n-1}(x')\,\dd x_2\wedge\cdots\wedge\dd x_n$ occurring there
is, up to the explicit nonzero scalar $(n+1)(n+2)/2$, exactly the form
$\omega_B$ constructed in the present paper.  Let $\boldsymbol{\mathcal E}'_n$ be the abelian relation of \cite[(5.64)]{M06},
whose $i$th component is
$\big(C^{\circ(i-1)}\big)^*\big(\varepsilon_{n-1}(x')\,\dd x_2\wedge\cdots\wedge\dd x_n\big)$
for $i=1,\ldots,n+3$, where $C$ is the cyclic birational map of
\cite[(4.3)]{M06} and $\varepsilon_{n-1}$ is given by \eqref{Eq:epsilon-M06}.

In the coordinates of \cite[\S5.9.2]{M06}, the points $x_1,\ldots,x_n,0,1,\infty$
of $\mathbf P^1$ 
carry the labels $1,\ldots,n+3$ respectively; this is the labelling of the
foliations used in \cite[\S4.1 and \S5.9]{M06}, which differs from the labelling
$[0,1,\infty,x_1,\ldots,x_n]$ of the points in \cite[(2.1)]{M06}. The first component is
%In the coordinates of \cite[\S5.9.2]{M06}, the points $x_1,\ldots,x_n,0,1,\infty$
%of $\mathbf P^1$ 
%carry the labels $1,\ldots,n+3$ respectively, and the first component is
 pulled back by the forgetful map $f_1$, so that the affine chart of
Proposition~\ref{P:comparison-explicit} is the natural chart of $\M_{0,A_1}$.
Since the component of $\boldsymbol{E}_n$ indexed by $1$ is $-f_1^*(\omega_1)$,
\eqref{Eq:omega-epsilon} gives
$$
f_1^*\big(\varepsilon_{n-1}(x')\,\dd x_2\wedge\cdots\wedge\dd x_n\big)
=\frac{(n+1)(n+2)}{2}\,f_1^*(\omega_1)
=-\frac{(n+1)(n+2)}{2}\,\big(\boldsymbol{E}_n\big)_1 .
$$

Moreover, by the lemma of \cite[\S4.1]{M06}, one has $C^*(\mathcal F_i)=\mathcal F_{c(i)}$
for every $i$, where $\mathcal F_i$ is the foliation defined by $f_i$ and
$c=(1\,2\,\cdots\,n{+}3)$. Since the automorphism group of $\M_{0,A}$ acts faithfully on the
foliations of $\boldsymbol{\mathcal W}_{0,A}$, $C$ is the relabelling automorphism associated
with $c$. Since $n+3$ is odd, $c$ is an even permutation, and
 the second point of Theorem~\ref{Prop:Differential-Identity} gives
$\big(c^{\,i-1}\big)^*\big(\boldsymbol{E}_n\big)_1=\big(\boldsymbol{E}_n\big)_i$
for $i=1,\ldots,n+3$. Therefore
$$
\boldsymbol{\mathcal E}'_n=-\frac{(n+1)(n+2)}{2}\,\boldsymbol{E}_n\, ,
$$
which proves Corollary~\ref{Cor:comparison}.

%%%%%%%%%%%%%%%%%%%%%%%%%%%%%%%%%%%%%%%%%
%%%%%%%%%%%%%%%%%%%%%%%%%%%%%%%%%%%%%%%%%
%%%%%%%%%%%%%%%%%%%%%%%%%%%%%%%%%%%%%%%%%
%% ICI ICI ICI 
%%
%%%%%%%%%%%%%%%%%%%%%%%%%%%%%%%%%%%%%%%%%
%%%%%%%%%%%%%%%%%%%%%%%%%%%%%%%%%%%%%%%%%

\begin{cor}[Second point of Proposition~\ref{Prop:descent}]
\label{Cor:Second-Point}
For every ordered set $A$ of even cardinality $N\geq 4$,  $\omega_A$ is 
a real-analytic form which is not rational on any nonempty open subset of
$\boldsymbol{\mathcal M}_{0,A}^{\mathbb R}$. In particular, $\omega_A$ is non-zero.
\end{cor}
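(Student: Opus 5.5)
We will deduce the corollary from the explicit formula of Proposition~\ref{P:comparison-explicit}. By relabelling we may assume $A=B=\{1,\ldots,N\}$ with $N=n+2$ and $n$ even. The form $\omega_A$ descends to $\boldsymbol{\mathcal M}_{0,A}$ by the first point of Proposition~\ref{Prop:descent}. On the standard affine chart $p_{N-2}=0$, $p_{N-1}=1$, $p_N=\infty$, which is dense, the proposition gives
\[
\omega_A=\frac{2}{(n+1)(n+2)}\,\varepsilon_{n-1}(x')\,\boldsymbol\eta .
\]
By Lemma~\ref{L:epsilon-C},
\[
\varepsilon_{n-1}=\sum_{k<\ell}K_{k\ell}\Log|y_{k\ell}|,
\]
where the $K_{k\ell}$ are rational functions. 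Real-analyticity on the real locus is then clear away from the diagonals. Thus everything reduces to showing that this coefficient function is not rational on any nonempty real open subset $V$ of the chart.

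We argue by contradiction. Suppose $\varepsilon_{n-1}$ agrees with a rational function $R$ on $V$. Both sides are real-analytic on the connected component (chamber) of $V$, so we may analytically continue inside the chamber. The plan is then to use the variation of $\Log|y_{k\ell}|$ near a hyperplane $y_k=y_\ell$.

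Pick a pair whose difference function actually vanishes on the chart, for instance $k=1$ and $\ell=m-1$, so that $y_{k\ell}=x_2$. In a chamber adjacent to the hyperplane $x_2=0$, approach a generic point of that hyperplane. All $K_{k'\ell'}$ with $\{k',\ell'\}\neq\{k,\ell\}$ have only finitely many poles there. Expand in the transverse variable $t=x_2$: the function $K_{k\ell}\Log|t|$ contributes a term $c\,t^{a}\Log|t|$ with $c\neq0$, while the other terms are Laurent series in $t$ with real-analytic coefficients in the remaining variables.

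The cleanest execution of this step is to pass to the complexification and compute monodromy. Replacing $\Log|t|$ by $\Log t$ locally, since the relevant quantities are real-analytic and $\Log|t|=\Re\Log t$ on $t>0$, analytic continuation around $t=0$ changes the continued function by $2i\pi K_{k\ell}$. A rational $R$ has trivial monodromy. Hence $K_{k\ell}$ would vanish identically, which is false because $(y_k-y_\ell)^n/(M_kM_\ell)\neq0$.

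To make this rigorous with absolute values, we use a simpler uniqueness argument. On $t>0$ small we have $\varepsilon_{n-1}=P+K\Log t$, where $P$ and $K$ are meromorphic near $t=0$ and $K\neq0$; this identity extends from $V$ by analytic continuation within the chamber. Since $\Log t$ is transcendental over the field of meromorphic (hence rational) functions, the equation $R=P+K\Log t$ forces $K=0$, a contradiction.

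The main obstacle will be justifying that only one term carries $\Log|x_2|$. Distinct pairs $\{k,\ell\}$ give distinct linear forms $y_k-y_\ell$, but in the chart the pairs $(m-1,k)$ and $(m,k)$ involve $x_i$ and $x_i-1$. We must therefore choose the pair so that $y_{k\ell}$ is the unique factor vanishing along the chosen hyperplane; $\{1,m-1\}$ works, since only $x_2$ vanishes on $\{x_2=0\}$ generically. We must also check that no other $\Log|y_{k'\ell'}|$ is singular there, which is immediate. Non-vanishing of $\omega_A$ then follows at once.
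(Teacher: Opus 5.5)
Your reduction matches the paper's. You relabel, use Proposition~\ref{P:comparison-explicit} and Lemma~\ref{L:epsilon-C}, and reduce to showing that $\sum_{k<\ell}K_{k\ell}\Log|y_{k\ell}|$ is not rational on any nonempty open set. From there the routes differ. The paper invokes the linear independence of $1$ and the nonconstant $\Log|y_{k\ell}|$ over rational functions (an extension of \cite[Lemma~5.9.6(3)]{M06}). You instead prove the needed special case directly: you isolate one logarithm near a wall and use that $\log t$ is not meromorphic at $t=0$, or equivalently has monodromy $2i\pi$. This is a valid and more self-contained route. It needs only one nonzero coefficient $K_{k\ell}$, which you have.

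There is one step that fails as written. You fix the pair $\{1,m-1\}$ and argue ``in a chamber adjacent to the hyperplane $x_2=0$'', but $V$ is arbitrary. In the real domain, the identity $\varepsilon_{n-1}=R$ can only be continued inside the chamber $D\supset V$, because each $\Log|y_{k'\ell'}|$ is singular on its wall.
\begin{itemize}
\item Already for $n=2$ (chart $x_2\in\R\setminus\{0,1\}$), the chamber $x_2>1$ never reaches $x_2=0$.
\item For $n=4$, in the chamber $x_2<x_3<0<x_4<1$ the closure of $D$ meets $\{x_2=0\}$ only where also $x_3=0$. There $\Log|x_2|$, $\Log|x_3|$ and $\Log|x_2-x_3|$ blow up simultaneously, so your isolation step breaks down.
\item Your local monodromy sketch has the same defect, since it also uses the identity near a point of $\{t=0\}$.
\end{itemize}

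The repair is easy: choose the pair according to $D$. Take $y_k,y_\ell$ consecutive in the linear order of $(y_1,\ldots,y_m)$ defining $D$, with $\{k,\ell\}\neq\{m-1,m\}$; this is possible since $m\geq 3$. The admissible pairs give pairwise distinct hyperplanes $x_i=x_j$, $x_i=0$, $x_i=1$. Hence near a generic point of that facet of $\overline D$ only $t=y_{k\ell}$ vanishes, $K_{k\ell}=t^{n-2}\cdot(\text{unit})$, the other $K_{k'\ell'}$ have at most simple poles, and your argument goes through verbatim. Alternatively, make the monodromy argument global. On $D$, write $\Log|y_{k'\ell'}|=\Log(\epsilon_{k'\ell'}y_{k'\ell'})$, where $\epsilon_{k'\ell'}$ are the constant signs of $D$. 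Continue holomorphically along paths in the connected complement of the complexified arrangement, and loop around any hyperplane.

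Two minor points:
\begin{itemize}
\item ``meromorphic (hence rational)'' is backwards.
\item $P$ is real-analytic with at most a simple pole along $t=0$, not meromorphic. The clean formulation is: on a generic transversal line, $R-P$ and $K\not\equiv 0$ are convergent Laurent series in $t$, and $\log t$ is not a quotient of two such series.
\end{itemize}
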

\begin{proof}
By relabelling, we may assume $A=B$ and work in the chart of \S\ref{SS:Identification}, with $n=N-2$.
By Lemma \ref{L:epsilon-C} and Equation \eqref{Eq:omega-epsilon}, it suffices to prove that 
$$ \sum_{k<\ell}K_{k\ell}(y)\,\mathrm{Log}\lvert \,y_{k\ell}\, \lvert $$ 
is not rational on any nonempty open subset. Since all the $K_{k\ell}$ are nonzero rational
functions, and since a real-analytic identity on a nonempty open subset extends by analytic
continuation, this follows  from the fact that the constant function $1$ and the nonzero functions among the $\mathrm{Log}\lvert y_{k\ell}\rvert$'s 
are linearly independent over the field  of rational
functions in the $y_i$'s (an immediate extension of \cite[Lemma~5.9.6(3)]{M06}).
\end{proof}

%%%%%%%%%%%%%%%%%%%%%%%%%%%%%%%%%%%%%%%%%%%%%%%
%%%%%%%%%%%%%%%%%%%%%%%%%%%%%%%%%%%%%%%%%%%%%%%
%\section{}%%%%%%%%%%%%%%%%%%%%%%%%%%%%%%%%%%%%%%%%%%
%%%%%%%%%%%%%%%%%%%%%%%%%%%%%%%%%%%%%%%%%%%%%%%
%%%%%%%%%%%%%%%%%%%%%%%%%%%%%%%%%%%%%%%%%%%%%%%

%%%%%%%%%%%%%%%%%%%%%%%%%%%%%%%%%%%%%%%

%%%%%%%%%%%%%%%%%%%%%%%%%%%%%%%%%%%%%%%
%%%%%%%%%%%%%%%%%%%%%%%%%%%%%%%%%%%%%%%

\end{document}